\definecolor{darkpurple}{HTML}{5a4a9c}
\definecolor{darkblue}{HTML}{186294}
\pgfplotsset{compat=1.18}
\newtheoremstyle{courant} 
{3mm} 
{2mm} 
{\normalfont} 
{0pt} 
{\bfseries} 
{.} 
{ } 
{} 
\theoremstyle{courant}
\newtheorem{theorem}{Theorem}[section]
\newtheorem{lemma}[theorem]{Lemma}
\newtheorem{proposition}[theorem]{Proposition}
\newtheorem{corollary}[theorem]{Corollary}
\newtheorem{definition}[theorem]{Definition}
\newtheorem{example}[theorem]{Example}
\newtheorem{remark}[theorem]{Remark}
\newtheorem*{question*}{Question}
\renewenvironment{proof}[1][\proofname]{\vspace{-3mm}\par
	\pushQED{\qed}%
	\normalfont \topsep6\p@\@plus6\p@\relax
	\trivlist
	\item\relax
	{\normalfont\textit
		{#1}\@addpunct{.}\ }}{\popQED\endtrivlist\@endpefalse
}
\renewcommand{\d}{\mathrm{d}}
\renewcommand{\Im}{\mathrm{Im}\,}
\renewcommand{\Re}{\mathrm{Re}\,}
\renewcommand{\O}{\mathrm{O}}
\renewcommand{\L}{\mathrm{L}}
\newcommand{\C}{\mathbf{C}}
\newcommand{\R}{\mathbf{R}}
\newcommand{\N}{\mathbf{N}}
\newcommand{\Z}{\mathbf{Z}}
\renewcommand{\S}{\mathbf{S}}
\renewcommand{\H}{\mathbf{H}}
\newcommand{\F}{\mathbf{F}}
\newcommand{\ie}{\textit{i.e. }}
\newcommand{\action}{\curvearrowright}
\DeclareMathOperator{\Isom}{Isom}
\DeclareMathOperator{\Hom}{Hom}
\DeclareMathOperator{\Conv}{Conv}
\DeclareMathOperator{\Min}{Min}
\DeclareMathOperator{\Id}{Id}
\DeclareMathOperator{\Fix}{Fix}
\DeclareMathOperator{\PO}{PO}
\DeclareMathOperator{\GL}{GL}
\DeclareMathOperator{\SL}{SL}
\DeclareMathOperator{\PSL}{PSL}
\DeclareMathOperator{\Aut}{Aut}
\DeclareMathOperator{\supp}{supp}
\DeclareMathOperator{\CAT}{CAT}
\DeclareMathOperator{\DF}{DF}
\DeclareMathOperator{\Lie}{Lie}
\DeclareMathOperator{\Jac}{Jac}
\DeclareMathOperator{\QI}{QI}
\DeclareMathOperator{\CC}{CC}
\DeclareMathOperator{\Cay}{Cay}
\DeclareMathOperator{\id}{id}
\newcommand{\compactcdot}{\mkern-3mu\cdot\mkern-3mu}
\newcommand*{\transp}[2][-3mu]{\ensuremath{\mskip1mu\prescript{\smash{\mathrm t\mkern#1}}{}{\mathstrut#2}}}
\DeclareRobustCommand*{\mfaktor}[3][]
{
   { \mathpalette{\mfaktor@impl@}{{#1}{#2}{#3}} }
}
\newcommand*{\mfaktor@impl@}[2]{\mfaktor@impl#1#2}
\newcommand*{\mfaktor@impl}[4]{
   \settoheight{\faktor@zaehlerhoehe}{\ensuremath{#1#2{#3}}}%
   \settoheight{\faktor@nennerhoehe}{\ensuremath{#1#2{#4}}}%
      \raisebox{-0.5\faktor@zaehlerhoehe}{\ensuremath{#1#2{#3}}}%
      \mkern-4mu\diagdown\mkern-5mu%
      \raisebox{0.5\faktor@nennerhoehe}{\ensuremath{#1#2{#4}}}%
}
\title{Convex-cocompact representations into the isometry group of the infinite-dimensional hyperbolic space}
\author{\sc David Xu}
\date{}
\begin{document}

\maketitle

\begin{abstract}
We prove that convex-cocompact representations of finitely generated groups in the group of isometries of the infinite-dimensional hyperbolic space form an open set in the space of representations, allowing us to deform these convex-cocompact representations. We then use bending to obtain convex-cocompact representations of surface groups that are not conjugate to any exotic representation of PSL(2,R) classified by Monod and Py.
\end{abstract}

\tableofcontents

\section{Introduction}
Convex-cocompact representations of fundamental groups of hyperbolic surfaces into rank-one Lie groups have been extensively studied as natural generalizations of Fuchsian representations. If $\Gamma$ is the fundamental group of a closed surface $S$ (compact connected orientable and without boundary components) of genus at least $2$, classifying the hyperbolic structures on $S$ is equivalent to classifying the discrete and faithful representations of $\Gamma$ into $\PSL(2,\R)$. These representations are defined up to the action by conjugation of $\PSL(2,\R)$. Denote by $\DF(\Gamma,\PSL(2,\R))$ the space of discrete and faithful representations of $\Gamma$ to $\PSL(2,\R)$, the quotient space
$$\mathcal{T}(S) = \DF(\Gamma,\PSL(2,\R))/\PSL(2,\R)$$
is called the \emph{Teichmüller space of $S$} and consists of representations of $\Gamma$ having cocompact image in $\PSL(2,\R)$. The space $\DF(\Gamma,\PSL(2,\R))$ inherits a topology as a subset of $\Hom(\Gamma,\PSL(2,\R))$ and $\mathcal{T}(S)$ is then equipped with the quotient topology. If $g \geqslant 2$ is the genus of $S$, then $\mathcal{T}(S)$ is homeomorphic to $\R^{6g-6}$. One notable feature of the Teichmüller space is that it is a whole connected component in the character variety $\Hom(\Gamma,\PSL(2,\R))/\PSL(2,\R)$.

Representations in $\mathcal{T}(S)$ can be deformed inside $\PSL(2,\C)$, these deformations are called \emph{quasi-Fuchsian} representations. The space of quasi-Fuchsian representations coincides with that of convex-cocompact represesentations of $\Gamma$ into $\PSL(2,\C)$, \ie representations whose image preserve some convex subset of $\H^{3}$ and act cocompactly on it. Moreover, this space is open in the character variety $\Hom(\Gamma,\PSL(2,\C))/\PSL(2,\C)$.

Note that $\PSL(2,\R)$ and $\PSL(2,\C)$ are isomorphic to the groups $\Isom^{+}(\H^{2})$ and $\Isom^{+}(\H^{3})$ of orientation-preserving isometries of the hyperbolic plane and hyperbolic $3$--space respectively. More generally, one can consider discrete representations of the group $\Gamma$ into the isometry groups of higher-dimensional hyperbolic spaces, $\Isom(\H^{n})$. The set of convex-cocompact representations of $\Gamma$ into $\Isom(\H^{n})$ is again open. This property is called the \emph{stability} of convex-cocompact representations, it is attributed to Marden for $n=3$ and to Thurston for the general case $n\in \N$.

In this paper, we are interested in describing the space of convex-cocompact representations of a surface group $\Gamma$ into the isometry group of the infinite-dimensional hyperbolic space. This space, denoted by $\H^{\infty}$, is an infinite-dimensional analog of the hyperbolic spaces $\H^{n}$ and its study was suggested by Gromov in \cite[Section 6]{Gromov}.

Although $\Isom(\H^{\infty})$ is not proper (closed balls are not compact), convex-cocompact subgroups in $\Isom(\H^{\infty})$ do exist. Take a totally geodesic embedding of $\H^{n}$ into $\H^{\infty}$, this comes with an embedding of their groups of isometries. Then any cocompact lattice $\Gamma$ in $\Isom(\H^{n})$ preserves this copy of $\H^{n}$ and acts cocompactly on it. One can actually find more ways to embed $\Isom(\H^{n})$ into $\Isom(\H^{\infty})$. In \cite{DP,MP}, Delzant and Py and later Monod and Py constructed families of irreducible representations of the isometry groups $\Isom(\H^{n})$ into $\Isom(\H^{\infty})$, \ie representations whose images act minimally on $\H^{\infty}$ (without any invariant totally geodesic proper subspace) and have no fixed point in the boundary $\partial \H^{\infty}$. These representations were called \emph{exotic} in \cite{MP}, we will also use this terminology in the remainder of this article. One can also construct similar representations for groups of automorphisms of a tree $\mathcal{T}$ (see \cite{BIM}). Restricting these representations to a discrete subgroup $\Gamma$ of $\Isom(\H^{n})$ or $\Aut(\mathcal{T})$, we obtain a \emph{strongly discrete} subgroup of $\Isom(\H^{\infty})$ acting irreducibly on $\H^{\infty}$. Strong discreteness will be defined along with other notions of discreteness in Section \ref{sec:preliminaries}. If $\Gamma$ is cocompact, its image in $\Isom(\H^{\infty})$ via the representations will be convex-cocompact.

In finite dimension, as mentioned above, the space of convex-cocompact representations of a finitely generated group $\Gamma$ into $\Isom(\H^{n})$ is open. The proof presented in \cite[Theorem 11.4]{Canary} relies on the fact that the convex-cocompact representations coincide with the representations $\rho$ such that the orbit map $\tau_{\rho} : \gamma \mapsto \rho(\gamma)(x_{0})$ is a quasi-isometric embedding in $\H^{n}$, where $x_{0} \in \H^{n}$ is any base point. We prove that this correspondence also holds for convex-cocompact representations into $\Isom(\H^{\infty})$.

\begin{theorem}\label{thm:QI<=>convex-cocompact}
Let $\Gamma$ be a finitely generated group and $\rho : \Gamma \to \Isom(\H^{\infty})$ be a representation. The following two assertions are equivalent.
\begin{enumerate}
    \item The orbit map $\tau_{\rho}:\gamma \mapsto \rho(\gamma)(x_{0})$ for any $x_{0} \in \H^{\infty}$ is a $\Gamma$-equivariant quasi-isometric embedding from $\Gamma$ into $\H^{\infty}$, where $\Gamma$ is endowed with the word distance associated to a finite set of generators. 
    \item There is a convex and locally compact $\Gamma$-invariant subset $\mathcal{C}\subset \H^{\infty}$ on which $\Gamma$ acts cocompactly and $\rho(\Gamma)$ is strongly discrete in $\Isom(\H^{\infty})$. 
\end{enumerate}
\end{theorem}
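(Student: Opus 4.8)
The plan is to prove the two implications separately, after the harmless observation that the quasi-isometric-embedding property of $\tau_{\rho}$ is independent of the base point: if $x_{0},x_{1}$ are two base points then $d(\rho(\gamma)x_{0},\rho(\gamma)x_{1})=d(x_{0},x_{1})$ for every $\gamma$, so the two orbit maps are at bounded distance and one is a quasi-isometric embedding if and only if the other is. Hence in the forward direction I place $x_{0}$ wherever convenient, and in the converse I take $x_{0}\in\mathcal{C}$. For the implication $(2)\Rightarrow(1)$, note that a convex subset of $\H^{\infty}$ is geodesic for the induced metric, so $\mathcal{C}$ is a length space on which $\Gamma$ acts by isometries; strong discreteness of $\rho(\Gamma)$ says precisely that this action is proper (only finitely many $\gamma$ move a given bounded set into itself), and by hypothesis it is cocompact. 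The Milnor--\v{S}varc lemma then gives that the orbit map $\Gamma\to\mathcal{C}$ is a $\Gamma$-equivariant quasi-isometry; composing with the isometric convex inclusion $\mathcal{C}\hookrightarrow\H^{\infty}$ yields $(1)$. Note that local compactness is not even needed here — it is entirely a feature to be \emph{constructed} in the converse.

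For $(1)\Rightarrow(2)$, strong discreteness is immediate: the lower quasi-isometry bound $d(x_{0},\rho(\gamma)x_{0})\geqslant\lambda^{-1}|\gamma|-c$ (with $|\gamma|$ the word length) forces $\{\gamma:d(x_{0},\rho(\gamma)x_{0})\leqslant R\}\subseteq\{\gamma:|\gamma|\leqslant\lambda(R+c)\}$, which is finite since balls in the word metric of a finitely generated group are finite. Next, since $\tau_{\rho}$ quasi-isometrically embeds $\Gamma$ into the $\CAT(-1)$, hence Gromov-hyperbolic, space $\H^{\infty}$, word geodesics map to uniform quasi-geodesics; Morse stability then shows both that $\Gamma$ is itself hyperbolic (its geodesic triangles are uniformly thin, pulled back from the thin triangles of $\H^{\infty}$) and that the orbit $\Gamma x_{0}$ is quasiconvex. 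I take $\mathcal{C}$ to be the closed convex hull of $\Gamma x_{0}$, which is convex and $\Gamma$-invariant by construction. Quasiconvexity together with the standard fact that in a $\delta$-hyperbolic space the convex hull of a set lies in a uniformly bounded neighbourhood $\mathcal{N}_{R}$ of the union of geodesics joining its points gives $\mathcal{C}\subseteq\mathcal{N}_{R}(\Gamma x_{0})$; hence $\mathcal{C}=\Gamma\cdot\bigl(\mathcal{C}\cap\overline{B}(x_{0},R)\bigr)$ and $\Gamma$ acts cocompactly on $\mathcal{C}$ \emph{provided} $\mathcal{C}\cap\overline{B}(x_{0},R)$ is compact — which is exactly local compactness.

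The hard part, and the only place where infinite-dimensionality genuinely intervenes, is this local compactness: in $\H^{\infty}$ closed bounded sets are far from compact, so coarse density cannot suffice. The mechanism I would use is compactness of the limit set. Since $\Gamma$ is hyperbolic, its compactification $\Gamma\cup\partial\Gamma$ is compact, the orbit map extends continuously to it, and its image $\Gamma x_{0}\cup\Lambda$ is compact in $\H^{\infty}\cup\partial\H^{\infty}$; in particular $\Lambda\cong\partial\Gamma$ is compact. To see $\mathcal{C}\cap\overline{B}(x_{0},R)$ is sequentially compact, take $z_{n}$ in it. Because $\mathcal{C}$ is a convex hull in a $\CAT(-1)$ space, each $z_{n}$ is uniformly close to a geodesic $[a_{n},b_{n}]$ with endpoints in $\Gamma x_{0}\cup\Lambda$; as these geodesics meet the fixed ball $\overline{B}(x_{0},R)$, their two endpoints cannot converge to a single boundary point, so after extraction $a_{n}\to\xi$ and $b_{n}\to\eta$ with $\xi\neq\eta$ in the compact set $\Gamma x_{0}\cup\Lambda$. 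The crucial point is that near $x_{0}$ the relevant portion of each geodesic lies in the $2$-plane $\Span(a_{n},b_{n})$ of the hyperboloid model, these $2$-planes converge, and on the bounded region $\overline{B}(x_{0},R)$ the geodesics converge in the Hilbert norm: finite-dimensionality of each individual plane restores compactness locally, even though infinitely many such planes occur globally. This produces a norm-convergent subsequence of $z_{n}$.

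The main obstacle is therefore exactly this upgrade from bounded, coarse control to honest compactness. The care it requires is twofold: first, controlling the iteration implicit in forming the closed convex hull, so that the $\CAT(-1)$ geodesic-convergence argument applies to \emph{every} point of $\mathcal{C}$ and not merely to the union of geodesics between pairs of boundary points; and second, verifying the distinct-endpoints dichotomy uniformly, so that no mass of the sequence escapes toward a single boundary point. Once local compactness is established, the cocompactness and strong discreteness assembled above complete $(2)$, and the two implications together give the stated equivalence.
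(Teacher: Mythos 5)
Your converse implication and your reduction of cocompactness to local compactness are sound and match the paper's structure: the paper's $(2)\Rightarrow(1)$ is exactly the non-proper \v{S}varc--Milnor variant (strong discreteness $=$ metric properness, coboundedness on the convex set, no properness of the ambient space needed), and strong discreteness in $(1)\Rightarrow(2)$ is the same word-metric finiteness argument. The genuine gap is in the step you yourself single out as the crux. Your mechanism gives, for each $z_{n}\in\mathcal{C}\cap\overline{B}(x_{0},R)$, a point $w_{n}$ on a geodesic $[a_{n},b_{n}]$ with $d(z_{n},w_{n})\leqslant c$, and (after extraction) $w_{n}\to w$ because the geodesics converge on bounded regions. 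But in a non-proper space this concludes nothing about $z_{n}$: a sequence lying within distance $c$ of a convergent sequence need not have any convergent subsequence (spread points along an infinite orthonormal-type set inside the ball of radius $c$ around the limit). So the sentence ``this produces a norm-convergent subsequence of $z_{n}$'' does not follow. Worse, the two issues you defer as ``requiring care'' are the entire content of the statement: the convex hull is the closure of an infinite iteration of geodesic joins; each single join of a compact set is compact (continuous image of a compact set, e.g.\ via the Klein-model parametrisation), but the increasing union over all iterates and then its closure is precisely where infinite-dimensional non-compactness can enter, and your geodesic-convergence argument only treats the first join. A side issue: the ``standard fact'' that in a $\delta$-hyperbolic space the convex hull lies in a uniformly bounded neighbourhood of the join is not a theorem about general $\delta$-hyperbolic (or even $\CAT(-1)$) spaces --- convex hulls are not coarsely controlled there; it is a statement about constant/pinched negative curvature and would itself need justification in $\H^{\infty}$.

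What closes the gap in the paper is a functional-analytic input, not a geodesic one: Mazur's compactness theorem (the closed convex hull of a compact subset of a Banach space is compact). In the Klein model $\H^{\infty}$ is the open unit ball of a Hilbert space, hyperbolic convexity is affine convexity, the boundary image $\partial\tau_{\rho}(\partial_{\infty}\Gamma)$ is compact in the closed ball (your observation that the orbit map extends continuously to the compactification is exactly what is used here), and Mazur makes its closed affine convex hull compact; local compactness of $\mathcal{C}$ is then immediate, with no iteration to control. With that in hand, the paper also avoids your unproved ``hull close to the join'' claim for cocompactness: coboundedness of the $\Gamma$-action on $\mathcal{C}$ is proved by contradiction, using Gromov products together with stability of quasi-geodesics to show that a sequence of points of $\mathcal{C}$ escaping the orbit would force points at unbounded distance from the orbit to lie at bounded distance from it. If you replace your compactness mechanism by Mazur's theorem (applied either to the limit set, as in the paper, or to $\Gamma x_{0}\cup\Lambda$ in the Klein model) and either justify the hull-versus-join estimate in $\H^{\infty}$ or substitute the paper's Gromov-product argument, your outline becomes a correct proof.
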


Let $\QI(\Gamma,\Isom(\H^{\infty}))$ be the set of representations of $\Gamma$ into $\Isom(\H^{\infty})$ which have quasi-isometric orbit maps and let $\CC(\Gamma,\Isom(\H^{\infty}))$ be the set of convex-cocompact representations in $\Hom(\Gamma,\Isom(\H^{\infty}))$. We can deduce that convex-cocompact representations of finitely generated groups into $\Isom(\H^{\infty})$ form an open subset of the space of all the representations.

\begin{corollary}\label{cor:openness thm}
If $\Gamma$ is finitely generated, then $\QI(\Gamma,\Isom(\H^{\infty})) = \CC(\Gamma,\Isom(\H^{\infty}))$ and these subsets are open in $\Hom(\Gamma,\Isom(\H^{\infty}))$.
\end{corollary}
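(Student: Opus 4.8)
The plan is to reduce everything to the openness of $\QI(\Gamma,\Isom(\H^{\infty}))$, since the equality $\QI=\CC$ is exactly the content of Theorem~\ref{thm:QI<=>convex-cocompact}: assertion (1) is precisely membership in $\QI(\Gamma,\Isom(\H^{\infty}))$, and assertion (2) is the definition of membership in $\CC(\Gamma,\Isom(\H^{\infty}))$. So I would first record this identification in one line and then concentrate on proving that $\QI(\Gamma,\Isom(\H^{\infty}))$ is open for the topology of pointwise convergence on a fixed finite generating set $S$.

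The engine of the argument is the \emph{stability of quasigeodesics} (the Morse lemma) in Gromov-hyperbolic spaces, together with its local-to-global companion. The key structural fact I would isolate first is that $\H^{\infty}$ is $\CAT(-1)$, hence $\delta$-hyperbolic with a constant $\delta$ that is \emph{independent of dimension} — the same universal $\delta$ as for every $\H^{n}$. This is what makes the finite-dimensional proof (as in \cite[Theorem 11.4]{Canary}) portable: the Morse lemma and the local-to-global principle for quasigeodesics hold in any geodesic $\delta$-hyperbolic space, with constants depending only on $(\delta,\lambda,c)$, and crucially they never invoke properness or local compactness.

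Now fix $\rho_{0}\in\QI$, so that $\tau_{\rho_{0}}$ is a $(\lambda,c)$-quasi-isometric embedding. For $\rho$ in a small neighborhood of $\rho_{0}$, the finitely many numbers $d(x_{0},\rho(s)x_{0})$, $s\in S$, are uniformly bounded by continuity of evaluation, which yields a uniform coarse-Lipschitz upper bound $d(x_{0},\rho(\gamma)x_{0})\le M\,|\gamma|$ for every $\gamma$. The substance is the lower bound, and here I would quote the local-to-global principle in its properness-free form: there exist a window length $L=L(\delta,\lambda,c)$ and constants $(\lambda',c')$ such that any discrete path in $\H^{\infty}$ whose restriction to every sub-window of length $\le L$ is a $(2\lambda,2c)$-quasigeodesic is automatically a global $(\lambda',c')$-quasigeodesic. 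Applied along a geodesic word $\gamma=s_{1}\cdots s_{n}$, the orbit path $y_{i}=\rho(s_{1}\cdots s_{i})x_{0}$ has window data governed entirely by finitely many values: since the metric is $\Isom(\H^{\infty})$-invariant, $d(y_{i},y_{j})=d\bigl(x_{0},\rho((s_{1}\cdots s_{i})^{-1}(s_{1}\cdots s_{j}))x_{0}\bigr)$ with argument of word length $\le|i-j|\le L$, so the whole condition depends on $\rho$ only through the values $d(x_{0},\rho(\eta)x_{0})$ with $|\eta|\le L$.

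The final step is the openness itself. Since $\rho_{0}$ is a genuine $(\lambda,c)$-quasi-isometric embedding, each of its windows is a $(\lambda,c)$-quasigeodesic, hence \emph{strictly} a $(2\lambda,2c)$-quasigeodesic; the window condition is therefore an open constraint on the finitely many continuous functions $\rho\mapsto d(x_{0},\rho(\eta)x_{0})$, $|\eta|\le L$. On a sufficiently small neighborhood of $\rho_{0}$ every such $\rho$ still satisfies the relaxed window condition, so by local-to-global each orbit path is a global $(\lambda',c')$-quasigeodesic; combined with the uniform upper bound, this exhibits $\tau_{\rho}$ as a quasi-isometric embedding with constants independent of $\rho$, whence $\rho\in\QI$. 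The main obstacle to guard against is exactly the non-properness of $\H^{\infty}$: the argument must never pass to a compactness or subsequential-limit step, and I would make sure the local-to-global lemma is cited in its purely metric form, so that the uniform hyperbolicity constant of $\H^{\infty}$ carries the entire proof.
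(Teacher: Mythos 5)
Your proposal is correct and takes essentially the same route as the paper: the identification $\QI(\Gamma,\Isom(\H^{\infty}))=\CC(\Gamma,\Isom(\H^{\infty}))$ is exactly Theorem~\ref{thm:QI<=>convex-cocompact}, and openness of $\QI$ is proved via the local-to-global principle (Theorem~\ref{local-to-global}), using that the local quasi-geodesic condition depends on $\rho$ only through the finitely many continuous functions $\rho\mapsto d(x_{0},\rho(\eta)x_{0})$ with $|\eta|$ at most the window length, giving uniform constants on a neighbourhood. The only cosmetic difference is that the paper extends the orbit map over the edges of the Cayley graph and perturbs by an additive error $1$ on short words, whereas you work with discrete orbit paths and relax the constants to $(2\lambda,2c)$ on windows.
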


It is already a little surprising that irreducible convex-cocompact representations could exist in infinite dimension since $\H^{\infty}$ is not proper. The constructions in \cite{BIM,DP} displayed families of convex-cobounded representations for $\Aut(\mathcal{T})$ and $\SL(2,\R)$, it was then proved in \cite{MP} that both of them actually come along with some locally compact subset of $\H^{\infty}$ on which the groups act minimally. The stability of convex-cocompact representations then allows the use of deformations to construct new convex-cocompact subgroups of $\Isom(\H^{\infty})$. 

Using bendings, which are techniques developed by Johnson and Millson \cite{JM} to deform representations of some lattices of $\PO(n,1)$ into $\PO(m,1)$ with $m > n$, we are able to prove the following.

\begin{theorem}\label{thm:infinite dimensional space of deformation}
    Let $\Gamma$ be the fundamental group of a closed hyperbolic surface. If $\rho : \Isom(\H^{2}) \to \Isom(\H^{\infty})$ is one of the exotic representations described by Monod and Py, then the space of deformations of $\rho|_{\Gamma}$, up to conjugation in $\Isom(\H^{\infty})$, contains a one-parameter family of representations which are not pairwise conjugate, nor conjugate to the restriction of any exotic representation of $\Isom(\H^{2})$.
\end{theorem}

Our theorem actually shows that for (torsion-free) cocompact lattices in $\Isom(\H^{2})$, there are much more representations than for the whole group $\Isom(\H^{2})$ itself, as classified by Monod and Py \cite{MP}. This can be thought of as a non-rigidity or flexibility result for representations of lattices of $\Isom(\H^{2})$ into $\Isom(\H^{\infty})$. Besides, the fact that the representations of the surface groups $\Gamma < \Isom(\H^{2})$ obtained by deformations do not arise as the restriction to $\Gamma$ of any exotic representation $\PSL(2,\R) \to \Isom(\H^{\infty})$ described by Monod and Py illustrates that the representations of surface groups in $\Isom(\H^{\infty})$ are much richer than that of $\PSL(2,\R)$.

To prove that some representations obtained by bending are not conjugate to each other, we first describe the centralizer of a loxodromic isometry in $\Isom(\H^{\infty})$. This centralizer is an infinite-dimensional Lie group (also called \emph{Banach-Lie groups}) for the uniform operator norm, \ie it is a group which is also a manifold modeled on an infinite-dimensional Banach space, where the manifold structure is compatible with the group operations. The Lie algebra of an infinite-dimensional Lie group is well-defined as well as the exponential map, however there are some notable differences with the classical finite-dimensional case: a closed subgroup of an infinite-dimensional Lie group is not always a Lie group; and an infinite-dimensional Lie algebra is not necessarily the Lie algebra of some infinite-dimensional Lie group.

Harris and Kaup \cite{harris-kaup} studied infinite-dimensional Lie groups that are algebraic, \ie defined by polynomials (possibly infinitely many, but with uniformly bounded degree). In this case, the Lie groups resemble those in finite dimension. The Lie group involved in our space of deformations is defined by only two polynomials. We are then able to compute its Lie algebra and show that it has infinite dimension. Some more materials on infinite-dimensional Lie groups can be found in \cite{delaharpe,kac_groups}.

This centralizer contains a one-parameter family of isometries having distinct translation length. Bending along this family then provides non-conjugate and continuous deformations of the initial representation.

\paragraph{Acknowledgement.}
We wish to thank Bruno Duchesne and Gye-Seon Lee for their constant support and help throughout the course of this work. We are very grateful to Gilles Courtois and Antonin Guilloux for their interest in this work. We are also indebted to Pierre Py who brought up the question of stability of convex-cocompact representations in the infinite-dimensional hyperbolic space, provided us with his idea of using bendings and who pointed out a significant mistake in the previous version of the paper.

The author is supported by the KIAS Individual Grant (MG100801) at Korea Institute for Advanced Study.

This work was also supported by a PHC Star grant and by the National Research Foundation of Korea (NRF) grant funded by the Korea government (Ministry of Science and ICT) (No. PHC 50166PH and RS-2023-00259480).

\section{Preliminaries}\label{sec:preliminaries}

We define the hyperbolic space of infinite dimension and describe some notions of discreteness for a group of isometries acting on a non-proper space.

\subsection{The infinite-dimensional hyperbolic space}

Let $\mathcal{H}$ be a real separable Hilbert space and denote by $(e_{i})_{i\in\N}$ a Hilbert basis of $\mathcal{H}$. Let $Q$ be a non-degenerate quadratic form on $\mathcal{H}$ of signature $(\infty,1)$ defined by
$$Q(x) = -x_{0}^{2} + \sum_{i=1}^{\infty} x_{i}^{2}$$
where the $x_{i}$'s are the coordinates of $x \in \mathcal{H}$ in the given basis. The (separable) infinite-dimensional real hyperbolic space is
$$\H^{\infty} = \{x \in \mathcal{H} \mid Q(x) = -1, x_{0} > 0\}.$$
This space is a non-proper complete totally geodesic metric space which is $\CAT(-1)$. The hyperbolic distance on $\H^{\infty}$ is given by the formula
$$\cosh(d(x,y)) = -B(x,y) = -\langle x,Jy\rangle,$$
where $B$ is the bilinear form obtained by polarization of $Q$, $\langle\cdot,\cdot\rangle$ denotes the scalar product on $\mathcal{H}$ and $J$ is the linear map on $\mathcal{H}$ such that $Je_{0} = -e_{0}$ and $Je_{i} = e_{i}$ for all $i\geqslant 1$. The group of isometries of $\H^{\infty}$ is $\Isom(\H^{\infty}) = \PO(\infty,1)$, the projectivization of the group of linear automorphisms of
$\mathcal{H}$ that preserve $Q$.

This definition is the analog of the hyperboloid model for finite-dimensional hyperbolic spaces. The other usual models for hyperbolic spaces can also be extended to infinite dimension. These models are described in \cite{DSU}, as well as many other properties of this space. In particular, the Klein model (or projective model) will be useful when dealing with convex subsets of the hyperbolic space in the proof of Theorem \ref{thm:QI<=>convex-cocompact}. We may identify $\H^{\infty}$ with the unit ball $\mathcal{B}$ of a real separable Hilbert space $\mathcal{H}$. The cone topology on $\overline{\H^{\infty}}$ corresponds to the topology of the Euclidean norm on the closed unit ball $\overline{\mathcal{B}}$.

\subsection{Discreteness of subgroups}

There are several equivalent ways to define discreteness for subgroups of isometries of finite-dimensional hyperbolic spaces. However, they do not agree in general for subgroups in $\Isom(\H^{\infty})$.

One natural group topology on $\Isom(\H^{\infty})$ is the \emph{compact-open topology}. This topology is actually equivalent to the \emph{pointwise convergence topology} as well as the quotient of the \emph{strong operator topology} on the group of operators $\O(\infty,1)$ (see \cite[Proposition 5.1.2]{DSU}). Topological properties of this group were studied by Duchesne in \cite{Duc23}. In particular, $\Isom(\H^{\infty})$ with the compact-open topology is a \emph{Polish} group, that is, a completely metrizable and separable group. Thus a subgroup $\Gamma < \Isom(\H^{\infty})$ can be called discrete if it is discrete as a topological subspace of $\Isom(\H^{\infty})$ with this compact-open topology. Keeping the notations in \cite{DSU}, we will denote this by \emph{$COT$-discrete} or \emph{$COTD$} (where $COT$ stands for compact-open topology).

For any metric space $X$, using the action of $\Isom(X)$ on $X$, one has other ways to define discreteness. We will again use the same terminology and notations as in \cite{DSU}. 

\begin{definition}[\cite{DSU}, Definition 5.2.1 and Remark 5.2.3]
Let $X$ be a metric space and $\Gamma < \Isom(X)$.
\begin{itemize}
    \item We say that $\Gamma$ is \emph{strongly discrete} ($SD$) if for every bounded set $B \subset X$, we have
    $$\#\{\gamma \in \Gamma \mid \gamma(B) \cap B \neq \emptyset\} < \infty.$$
    Equivalently, $\Gamma$ is strongly discrete if for every $x\in X$ and $R > 0$,
    $$\#\{\gamma \in \Gamma \mid d(x,\gamma(x))\leqslant R\} < \infty.$$
    \item We say that $\Gamma$ is \emph{moderately discrete} ($MD$) if for every $x \in X$, there exists an open set $U\subset X$ containing $x$ such that
    $$\#\{\gamma \in \Gamma \mid \gamma(U) \cap U \neq \emptyset\} < \infty.$$
    Equivalently, $\Gamma$ is moderately discrete if for every $x\in X$, there exists $R > 0$ such that
    $$\#\{\gamma \in \Gamma \mid d(x,\gamma(x))\leqslant R\} < \infty.$$
    \item We say that $\Gamma$ is \emph{weakly discrete} ($WD$) if for every $x \in X$, there exists $\epsilon > 0$ such that
    $$\Gamma\cdot x \cap B(x,\epsilon) = \{x\}.$$
\end{itemize}
\end{definition}

\begin{remark}
    Note that $(SD)$ is also known as \emph{metrically proper}, $(MD)$ as \emph{wandering} and that $(WD)$ means having discrete orbits.
\end{remark}

\begin{proposition}[\cite{DSU}, Propositions 5.2.4 and 5.2.7]
For $X = \H^{\infty}$, let $\Gamma < \Isom(\H^{\infty})$. We have the following implications
$$SD \Rightarrow MD \Rightarrow WD \Rightarrow COTD,$$
where none of the reverse implications holds.
\end{proposition}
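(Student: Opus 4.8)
The plan is to prove the four-term chain of implications and then to break each of the three reverse implications by an explicit example. The non-properness of $\H^{\infty}$ is responsible both for the subtlety of the implication $WD \Rightarrow COTD$ and for the failure of every reverse arrow.

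Two of the three forward implications are formal. For $SD \Rightarrow MD$ I would only observe that the displacement form of $SD$, which quantifies over every radius, specialises verbatim to that of $MD$, which asks for a single radius. For $MD \Rightarrow WD$, fix $x$ and let $R$ be a radius with $F := \{\gamma : d(x,\gamma(x)) \leqslant R\}$ finite; put $\delta$ equal to the least strictly positive value of $d(x,\gamma(x))$ attained on $F$, and $\delta := R$ if there is none. Then $\epsilon := \min(R,\delta) > 0$ witnesses $WD$ at $x$: any $\gamma$ with $\gamma(x) \in B(x,\epsilon)$ lies in $F$ and has displacement strictly below $\delta$, hence displacement $0$, so $\gamma(x)=x$.

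The implication $WD \Rightarrow COTD$ is where I expect the real work, and I would argue by contraposition using Baire's theorem. The naive attempt --- isolating the identity in $\Gamma$ by constraining a basic compact-open neighbourhood, i.e. the motion of finitely many points --- is doomed, because in infinite dimension the pointwise stabiliser of a finite set is a copy of $\O(\infty)$ and is far from trivial: no finite set of points pins down an isometry. Instead, suppose $\Gamma$ is not $COTD$. Since $\Isom(\H^{\infty})$ is Polish and the compact-open topology coincides with pointwise convergence, the identity is not isolated, so there is a sequence $\gamma_{n} \in \Gamma \setminus \{\Id\}$ with $\gamma_{n}(y) \to y$ for every $y$. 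If $\Gamma$ were $WD$, then for each $x$ the radius $\epsilon$ given by $WD$ forces $\gamma_{n}(x) = x$ for all large $n$, since $\gamma_{n}(x) \to x$; hence $\H^{\infty} = \bigcup_{N} G_{N}$, where $G_{N} := \bigcap_{n \geqslant N} \Fix(\gamma_{n})$ is closed. As $\H^{\infty}$ is complete, Baire's theorem yields an $N$ with $G_{N}$ of nonempty interior; but then an open ball is fixed pointwise by each $\gamma_{n}$ with $n \geqslant N$, and an isometry fixing an open subset of $\H^{\infty}$ pointwise is the identity, as its linear representative in $\O(\infty,1)$ fixes a total family of vectors. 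This gives $\gamma_{n} = \Id$ for $n \geqslant N$, a contradiction. The crux is precisely this Baire step, which upgrades ``each point is eventually fixed'' to ``an open set is fixed'', bypassing the non-compactness of point stabilisers.

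Finally I would show each reverse implication fails by an explicit abelian group fixing $e_{0}$ or a boundary point, each built to exploit non-properness. For $MD \nRightarrow SD$, take the parabolic translations $T_{v}$, $v \in \Lambda := \bigoplus_{i \geqslant 1} \Z e_{i}'$, along an orthonormal family $(e_{i}')$ in a horosphere: every nonzero $v \in \Lambda$ has $\|v\| \geqslant 1$, so on a fixed horosphere all nontrivial displacements are bounded below by some $d_{1} > 0$ (giving $MD$, even $WD$), while the infinitely many $\pm e_{i}'$ realise the same displacement $d_{1}$, so $SD$ fails. For $WD \nRightarrow MD$, let $\Gamma = \langle S \rangle$ with $S$ the bilateral shift on the spatial part, identified with $\ell^{2}(\Z)$, fixing $e_{0}$: the autocorrelations $\langle S^{n}u,u\rangle$ are $<\|u\|^{2}$ for $n \neq 0$ and tend to $0$, so every orbit is discrete (hence $WD$), whereas the whole group stabilises $e_{0}$, so $MD$ fails there. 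For $COTD \nRightarrow WD$, take $g = R \oplus S$ with $R$ an irrational rotation of a spatial plane and $S$ the shift on a complementary $\ell^{2}(\Z)$: on the rotation plane an orbit is dense in a circle, so $WD$ fails, yet $\|S^{n}w - w\|$ stays bounded away from $0$ on a fixed $w$, whence $g^{n} \not\to \Id$ and $\langle g \rangle$ is $COTD$. The only delicate point in these examples is the shift estimate $\sup_{n \neq 0} \langle S^{n}w,w\rangle < \|w\|^{2}$, which holds because a nonzero periodic sequence cannot lie in $\ell^{2}(\Z)$.
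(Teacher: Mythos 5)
The paper gives no proof of this proposition at all: it is imported verbatim from \cite[Propositions 5.2.4 and 5.2.7]{DSU}, with the reader referred to chapter 5 of that book for details. Your proposal therefore does genuinely more work, and as far as I can check it is correct and self-contained, in the same spirit as the examples in the cited source. The two formal implications are fine (including the convention $\delta := R$ in $MD \Rightarrow WD$ when all displacements in $F$ vanish, which you correctly state). Your Baire-category argument for $WD \Rightarrow COTD$ is sound and uses exactly the three ingredients available in this setting: the compact-open topology on $\Isom(\H^{\infty})$ coincides with pointwise convergence and is metrisable (both facts the paper records from \cite{DSU,Duc23}), so non-discreteness of $\Gamma$ yields a sequence $\gamma_{n} \in \Gamma\setminus\{\Id\}$ with $\gamma_{n} \to \Id$ pointwise; completeness of $\H^{\infty}$, so the closed sets $G_{N} = \bigcap_{n\geqslant N}\Fix(\gamma_{n})$ cannot all have empty interior; and the fact that an isometry fixing an open set pointwise is trivial, since its lift in $\O(\infty,1)$ fixes a family of vectors spanning $\mathcal{H}$. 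You are also right that this step is where non-properness bites, as finite point stabilisers are unavailable. The three counterexamples are correct: the parabolic ``lattice'' of translations in a horosphere has all nontrivial displacements at a given height bounded below (giving $MD$) while infinitely many generators realise the minimal displacement (killing $SD$, via $\cosh d = 1 + \|v\|^{2}/2t^{2}$ in the upper half-space model); the elliptic isometry built from the bilateral shift has separated orbits because no nonzero $\ell^{2}(\Z)$-sequence is periodic and autocorrelations decay (giving $WD$), yet an infinite stabiliser of $e_{0}$ (killing $MD$); and rotation-plus-shift gives dense circle orbits (killing $WD$) while remaining $COTD$. On that last point, note that $g^{n} \not\to \Id$ alone is weaker than $COTD$, but your uniform bound $\inf_{n\neq 0} d(y,g^{n}y) \geqslant \delta > 0$ at a single well-chosen point $y$ over the shift component does produce the isolating neighbourhood $V(\Id,\delta,\{y\})$, so the argument is complete as written; it becomes even cleaner if one takes $w$ to be a standard basis vector of $\ell^{2}(\Z)$, for which all autocorrelations $\langle S^{n}w,w\rangle$, $n \neq 0$, vanish outright. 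In short: the paper buys brevity by citing \cite{DSU}; your version buys a self-contained account that makes visible precisely where completeness and infinite-dimensionality enter.
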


More details can be found in \cite[Chapter 5]{DSU}.

\section{Stability of convex-cocompact representations into \texorpdfstring{$\Isom(\H^{\infty})$}{Isom(H^infty)}}

The space $\H^{\infty}$ is an infinite-dimensional manifold, modelled on a Hilbert space. Thus it is not locally compact and not proper (closed balls are not compact). It then becomes more difficult to find compact subsets inside $\H^{\infty}$ which do not lie inside a finite-dimensional hyperbolic space $\H^{n}$ embedded in $\H^{\infty}$. In this non-proper setting, it seems then more natural to look for bounded sets instead of compact ones. Therefore, we start by defining convex-cobounded actions. However, we will see that the finite-generation assumption of the group allows us to recover actual convex-cocompactness. We emphasize that cocompactness and coboundedness are different in general at the end of this section.

\subsection{Convex-cocompact representations and quasi-isometric embeddings}

\begin{definition}
Let $\Gamma$ be a group acting by isometries on a metric space $X$. We say that $\Gamma$ is \emph{cobounded} if there exists $\sigma > 0$ such that
$$\Gamma\cdot B(0,\sigma) = X.$$
Moreover, we say that $\Gamma$ is \emph{convex-cobounded} if there is a $\Gamma$-invariant convex subset $\mathcal{C}\subset X$ such that the restriction of $\Gamma$ to $\mathcal{C}$ is cobounded.
\end{definition}

Recall that a map $f : X \to Y$ between two metric spaces $(X,d_{X})$ and $(Y,d_{Y})$ is called a \emph{$(K,C)$--quasi-isometric embedding} if there exist $K\geqslant 1$ and $C \geqslant 0$ such that for all $x_{1}, x_{2} \in X$,
$$\dfrac{1}{K}d_{Y}(f(x_{1}),f(x_{2}))-C \leqslant d_{X}(x_{1},x_{2}) \leqslant Kd_{Y}(f(x_{1}),f(x_{2})) + C.$$
If, in addition, $f$ is coarsely surjective, \ie there exists a constant $L \geqslant 0$ such that every point of $Y$ lies in the $L$–neighbourhood of the image of $f$, then $f$ is called a \emph{$(K,C)$--quasi-isometry}.

\begin{proposition}[\cite{GH}, Proposition 7.14] \label{prop:extension_to_boundary}
Let $X,Y$ be two $\delta$-hyperbolic geodesic spaces. If $\varphi: X \to Y$ is a quasi-isometry, then $\varphi$ induces a boundary map $\partial\varphi: \partial X \to \partial Y$ which is a homeomorphism.
\end{proposition}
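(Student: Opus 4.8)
The plan is to work with the Gromov-product description of the boundary, which is well suited to the (possibly non-proper) $\delta$-hyperbolic setting. Recall that $\partial X$ is the set of equivalence classes of sequences $(x_{n})$ tending to infinity, meaning $(x_{i}\,|\,x_{j})_{x_{0}} \to \infty$ as $i,j\to\infty$, where $(x\,|\,y)_{w} = \frac{1}{2}\big(d(w,x)+d(w,y)-d(x,y)\big)$ is the Gromov product with respect to a fixed basepoint $x_{0}$, and two such sequences are equivalent when their mutual Gromov products also diverge. The topology is generated by the sets of boundary points whose Gromov product with a fixed sequence exceeds a given threshold. The whole argument rests on the stability of quasi-geodesics (the Morse lemma): in a $\delta$-hyperbolic geodesic space, every $(K,C)$-quasi-geodesic lies within Hausdorff distance $R=R(\delta,K,C)$ of any geodesic joining its endpoints.

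First I would establish the central estimate that a $(K,C)$-quasi-isometry $\varphi$ coarsely preserves Gromov products: there is a constant $D=D(\delta,K,C)$ with
$$\tfrac{1}{K}(x\,|\,y)_{x_{0}} - D \;\leqslant\; (\varphi(x)\,|\,\varphi(y))_{\varphi(x_{0})} \;\leqslant\; K\,(x\,|\,y)_{x_{0}} + D$$
for all $x,y \in X$. This is where hyperbolicity is essential: one uses that in a $\delta$-hyperbolic space the Gromov product $(x\,|\,y)_{w}$ agrees, up to an additive error of order $\delta$, with the distance $d(w,[x,y])$ from $w$ to a geodesic segment $[x,y]$; applying $\varphi$ turns $[x,y]$ into a quasi-geodesic, which by the Morse lemma fellow-travels a genuine geodesic $[\varphi(x),\varphi(y)]$, and the quasi-isometry inequalities then control $d(\varphi(x_{0}),[\varphi(x),\varphi(y)])$ in terms of $d(x_{0},[x,y])$.

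With this estimate in hand the construction is routine. If $(x_{n})$ tends to infinity in $X$, then the displayed inequality forces $(\varphi(x_{i})\,|\,\varphi(x_{j}))_{\varphi(x_{0})}\to\infty$, so $(\varphi(x_{n}))$ tends to infinity in $Y$; likewise equivalent sequences have equivalent images, so setting $\partial\varphi([x_{n}]) = [\varphi(x_{n})]$ is well-defined. For bijectivity I would invoke a quasi-inverse $\psi \colon Y \to X$, for which $\psi\circ\varphi$ and $\varphi\circ\psi$ lie at bounded distance from the respective identities; since a map at bounded distance from another induces the same boundary map, $\partial\psi$ is a two-sided inverse of $\partial\varphi$. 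Finally, because the boundary topology is defined entirely through the Gromov products, and the estimate above shows these are distorted only by the controlled factor $K$ and additive constant $D$, both $\partial\varphi$ and $\partial\psi$ send basic neighbourhoods into basic neighbourhoods and are therefore continuous; hence $\partial\varphi$ is a homeomorphism.

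The main obstacle is the Gromov-product estimate of the second step: the naive bounds coming directly from the quasi-isometry inequalities do not close, precisely because the Gromov product is a signed combination of distances, and it is only the $\delta$-hyperbolicity — encoded through the approximation by distances to geodesics and the Morse lemma — that makes the coarse comparison go through. A secondary technical point to handle carefully is that the Gromov products of sequences are only eventually large rather than monotone, so passing between the sequential definition of the topology and the neighbourhood bases requires the usual $\liminf$ bookkeeping.
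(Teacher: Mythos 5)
Your proposal is correct, and it is essentially the proof of the result the paper itself does not reprove but imports from \cite{GH}, Proposition 7.14: the sequential (Gromov-product) description of the boundary, the coarse preservation of Gromov products via stability of quasi-geodesics, and a quasi-inverse to produce the inverse boundary map are exactly the ingredients of that standard argument, and they remain valid without properness, which is the point needed here for $\H^{\infty}$. Nothing further is required.
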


If $\Gamma$ is a group acting properly and cocompactly on some proper geodesic space $X$, then the \v{S}varc-Milnor lemma states that $\Gamma$ must be finitely generated and that for any choice of base point $x_{0}\in X$, the orbit map $\gamma \mapsto \gamma(x_{0})$ is a quasi-isometry between the Cayley graph of $\Gamma$ (for some finite generating set $S$) and $X$, see for example \cite[Proposition I.8.19]{BH}. Recall that two elements $\gamma_{1},\gamma_{2} \in \Gamma$ are connected by an edge in the Cayley graph $\Cay_{S}(\Gamma)$ whenever there is an element $s \in S\cup S^{-1}$ such that $\gamma_{1} = \gamma_{2}s$ and that the metric $d_{S}$ on $\Cay_{S}(\Gamma)$ is the metric that assigns length $1$ to every edge in $\Cay_{S}(\Gamma)$.

When the space is not proper, there is an analog of \v{S}varc-Milnor's lemma for a convex-cobounded action instead of cocompact.

\begin{theorem}[\cite{DSU}, Theorem $12.2.12$]\label{thm:svarc-milnor version DSU}
Let $X$ be a $\CAT(-1)$ space, let $x_{0} \in X$ be a base point. Suppose that $\Gamma < \Isom(X)$ is strongly discrete and convex-cobounded. Then $\Gamma$ is finitely generated and the orbit map $\gamma\mapsto\gamma(x_{0})$ is a quasi-isometric embedding.
\end{theorem}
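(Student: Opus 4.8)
The plan is to adapt the classical proof of the Švarc--Milnor lemma, replacing the role of properness by strong discreteness and the role of cocompactness by convex-coboundedness. Since replacing the base point $x_{0}$ by another point $x_{0}'$ changes each orbit distance $d(\gamma_{1}x_{0},\gamma_{2}x_{0})$ by at most $2d(x_{0},x_{0}')$ (because $\Gamma$ acts by isometries), the quasi-isometric embedding property does not depend on the base point, so I would first reduce to the case $x_{0}\in\mathcal{C}$, where $\mathcal{C}$ is the $\Gamma$-invariant convex set on which $\Gamma$ acts coboundedly. I then fix $\sigma>0$ such that every point of $\mathcal{C}$ lies within distance $\sigma$ of the orbit $\Gamma\compactcdot x_{0}$, which is exactly what coboundedness on $\mathcal{C}$ provides (after centring the cobounding ball at $x_{0}$).

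First I would produce the generating set. Set $S=\{\gamma\in\Gamma:d(x_{0},\gamma x_{0})\leqslant 3\sigma\}$; this set is finite precisely by strong discreteness, which is the crucial hypothesis. To see that $S$ generates $\Gamma$, fix $\gamma\in\Gamma$ and consider the geodesic $[x_{0},\gamma x_{0}]$: since $x_{0},\gamma x_{0}\in\mathcal{C}$ and $\mathcal{C}$ is convex in the uniquely geodesic space $X$, this geodesic stays inside $\mathcal{C}$. Subdivide it by points $x_{0}=p_{0},p_{1},\dots,p_{n}=\gamma x_{0}$ with $d(p_{i-1},p_{i})\leqslant\sigma$ and $n\leqslant d(x_{0},\gamma x_{0})/\sigma+1$. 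Coboundedness furnishes $\gamma_{i}\in\Gamma$ with $d(p_{i},\gamma_{i}x_{0})\leqslant\sigma$, taking $\gamma_{0}=e$ and $\gamma_{n}=\gamma$. Then $s_{i}:=\gamma_{i-1}^{-1}\gamma_{i}$ satisfies $d(x_{0},s_{i}x_{0})=d(\gamma_{i-1}x_{0},\gamma_{i}x_{0})\leqslant 3\sigma$ by the triangle inequality, so $s_{i}\in S$, and the telescoping product gives $\gamma=s_{1}\cdots s_{n}$. This shows $\Gamma=\langle S\rangle$ is finitely generated and records the bound $d_{S}(e,\gamma)\leqslant n\leqslant\sigma^{-1}d(x_{0},\gamma x_{0})+1$.

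For the reverse estimate I would use only the triangle inequality. Writing $\gamma=s_{1}\cdots s_{m}$ as a geodesic word of length $m=d_{S}(e,\gamma)$ in $S$ and setting $M=\max_{s\in S}d(x_{0},sx_{0})\leqslant 3\sigma$ (finite since $S$ is finite), $\Gamma$-equivariance gives $d(x_{0},\gamma x_{0})\leqslant\sum_{i=1}^{m}d(x_{0},s_{i}x_{0})\leqslant M\,d_{S}(e,\gamma)$. Combining the two estimates yields $M^{-1}d(x_{0},\gamma x_{0})\leqslant d_{S}(e,\gamma)\leqslant\sigma^{-1}d(x_{0},\gamma x_{0})+1$. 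Taking $K=\max(M,\sigma^{-1},1)$ and $C=1$, and using left-invariance of $d_{S}$ together with the isometric action to reduce the general two-point inequality to the case $\gamma_{1}=e$, $\gamma_{2}=\gamma$, this is exactly the definition of a quasi-isometric embedding for the orbit map.

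The only genuinely delicate point is the finiteness of $S$: in the proper-cocompact setting it comes for free from compactness, whereas here it is the strong-discreteness hypothesis that does the work, and it is also what underlies the finiteness of the constant $M$. The remaining ingredients---convexity of $\mathcal{C}$ to keep the subdividing geodesic inside $\mathcal{C}$, and the coboundedness radius $\sigma$ to approximate each subdivision point by an orbit point---are then routine, so I expect no further obstacle beyond bookkeeping of the constants.
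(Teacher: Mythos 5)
Your proof is correct, but note that the paper does not prove this statement at all---it is imported verbatim from \cite{DSU} (Theorem 12.2.12)---so the comparison is with the standard argument in that source, which is essentially what you give: the classical \v{S}varc--Milnor chaining along the geodesic $[x_{0},\gamma x_{0}]$, with strong discreteness replacing properness to make $S=\{\gamma \in \Gamma \mid d(x_{0},\gamma x_{0})\leqslant 3\sigma\}$ finite, and convexity of $\mathcal{C}$ (in the uniquely geodesic $\CAT(-1)$ space) keeping the subdivision points where coboundedness applies. The base-point reduction, the telescoping product, and the two-sided estimates with constants $K=\max(M,\sigma^{-1},1)$, $C=1$ are all sound, so no changes are needed.
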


\begin{remark}
When $X = \H^{\infty}$, a subgroup $\Gamma < \Isom(\H^{\infty})$ cannot be simultaneously cobounded (for its action by isometries on $\H^{\infty}$) and strongly discrete, see \cite[Prop. 12.2.2]{DSU}.
\end{remark}

\begin{proposition} \label{prop:QI=>strongly discrete+almost faithful}
Let $\Gamma$ be a finitely generated group and $S$ be a finite generating set. If $\rho : \Gamma \to \Isom(\H^{\infty})$ is a representation such that the orbit map is a quasi-isometric embedding, then $\rho(\Gamma)$ is strongly discrete. In particular, the representation $\rho$ is almost faithful (\ie the kernel of $\rho$ is finite).
\end{proposition}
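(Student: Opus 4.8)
The plan is to prove the contrapositive-free direction directly: assume the orbit map $\tau_\rho$ is a $(K,C)$--quasi-isometric embedding and deduce strong discreteness of $\rho(\Gamma)$. The key observation is that the strong discreteness condition asks us to bound, for each $x \in \H^\infty$ and each $R>0$, the number of $\gamma \in \Gamma$ with $d(x,\rho(\gamma)x) \leqslant R$. Since strong discreteness for a single base point is equivalent to strong discreteness for all bounded sets (by the remark recorded in the definition, where moving between base points costs a bounded additive error by the triangle inequality), it suffices to treat the single base point $x_0$ used in the orbit map.

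The main computation is then transparent. Fix $x_0 \in \H^\infty$ and set $x = x_0$. For any $\gamma \in \Gamma$, the quasi-isometric embedding inequality
$$\frac{1}{K}\,d(\rho(\gamma)x_0, x_0) - C \leqslant d_S(\gamma, e) \leqslant K\, d(\rho(\gamma)x_0, x_0) + C$$
(applied with $x_1 = \gamma$, $x_2 = e$, using $\tau_\rho(e) = x_0$) shows that if $d(x_0, \rho(\gamma)x_0) \leqslant R$, then the word length $d_S(\gamma, e)$ is bounded by $KR + C$. First I would record that the set of $\gamma \in \Gamma$ with $d_S(\gamma, e) \leqslant KR + C$ is exactly the ball of radius $KR+C$ in the Cayley graph, and this ball is finite because $\Gamma$ is finitely generated (a ball of finite radius in a locally finite graph is finite). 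Hence $\#\{\gamma \mid d(x_0, \rho(\gamma)x_0) \leqslant R\}$ is finite for every $R$, which is precisely strong discreteness with respect to $x_0$, and therefore strong discreteness.

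For the "almost faithful" assertion, I would observe that the kernel of $\rho$ consists of elements $\gamma$ with $\rho(\gamma) = \Id$, hence $\rho(\gamma)x_0 = x_0$ and $d(x_0, \rho(\gamma)x_0) = 0 \leqslant R$ for any $R > 0$. By the strong discreteness just established (or directly from the displayed inequality, which forces $d_S(\gamma,e) \leqslant C$), the kernel is contained in a finite ball of the Cayley graph and is therefore finite.

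I do not anticipate a serious obstacle here; the statement is essentially a repackaging of the definition of quasi-isometric embedding together with local finiteness of the Cayley graph of a finitely generated group. The only point requiring a small amount of care is the reduction from "strong discreteness at the single base point $x_0$" to the full definition over all bounded sets, but this is exactly the equivalence built into the definition of $(SD)$ and costs only the bounded additive error incurred when changing base points via the triangle inequality.
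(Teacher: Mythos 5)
Your proof is correct and follows essentially the same argument as the paper: the quasi-isometric embedding inequality converts the displacement bound $d(x_0,\rho(\gamma)x_0)\leqslant R$ into a word-length bound, finiteness follows from local finiteness of the Cayley graph, and the kernel statement follows by taking $R=0$. The only cosmetic difference is that you make explicit the (correct) triangle-inequality reduction from a single base point to all base points, which the paper leaves implicit.
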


\begin{proof}
The proof does not differ from the finite-dimensional case. Let $x_{0} \in \H^{\infty}$ and suppose that the orbit map $\tau_{\rho}: \Gamma \to \H^{\infty}, \gamma \mapsto \rho(\gamma)(x_{0})$ is a $(K,C)$--quasi-isometric embedding. Let $R > 0$. Since $\tau_{\rho}$ is $(K,C)$--quasi-isometric, we have
$$\dfrac{1}{K}d_{S}(id,\gamma)-C \leqslant d(\tau_{\rho}(id),\tau_{\rho}(\gamma)) = d(x_{0},\rho(\gamma)(x_{0}))$$
so $d(x_{0},\rho(\gamma)(x_{0})) \leqslant R$ implies that $d_{S}(id,\gamma) \leqslant K(R+C)$, thus
$$\#\{\gamma \in \Gamma \mid d(x_{0},\rho(\gamma)(x_{0})) \leqslant R\} \leqslant \#\{\gamma \in \Gamma \mid d_{S}(id,\gamma) \leqslant K(R+C)\}.$$
The right-hand side is finite since $\Gamma$ is finitely generated, this shows that $\Gamma$ is strongly discrete.

Since $$\ker\rho = \{\gamma\in\Gamma \mid \rho(\gamma) = id\} \subset \{\gamma\in\Gamma \mid d(x_{0},\rho(\gamma)(x_{0})) = 0\},$$
it follows that the kernel of $\rho$ is finite
\end{proof}

Let $\Gamma$ be a finitely generated group and $S$ be a finite generating set for $\Gamma$. We will denote by $\partial_{\infty}\Gamma$ the visual boundary of the Cayley graph $\Cay_{S}(\Gamma)$.

The proof of the stability of convex-cocompact representations of $\Gamma$ in $\H^{n}$ relies on the characterization of convex-cocompact representations as those whose orbit maps are quasi-isometric embeddings. We aim to get a similar result for representations in $\H^{\infty}$. The assumption that $\Gamma$ is finitely generated allows us to recover some compactness thanks to the following theorem from Mazur (see for example \cite[Theorem 2.8.15]{Megginson}). This fact was first observed in \cite[Lemma 4.2]{MP}.

\begin{theorem}[Mazur's compactness theorem]\label{thm:Mazur compactness theorem}
The closed convex hull of a compact subset of a Banach space is itself compact.
\end{theorem}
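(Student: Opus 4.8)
The plan is to invoke the standard characterisation of compactness in a complete metric space: a subset of a Banach space $E$ is compact if and only if it is both complete and totally bounded. Since $\overline{\Conv(K)}$ is by construction a closed subset of $E$, it is automatically complete, so the whole problem reduces to proving total boundedness. Moreover, the closure of a totally bounded set is again totally bounded, so it is enough to show that $\Conv(K)$ itself is totally bounded.

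The crucial reduction is the claim that the convex hull of any \emph{totally bounded} set is totally bounded; applied to the compact --- hence totally bounded --- set $K$, this finishes the proof. To prove the claim I would fix $\epsilon > 0$ and cover $K$ by finitely many balls of radius $\epsilon/2$ whose centres form a finite set $F = \{x_{1},\dots,x_{n}\}$. The first observation is that $\Conv(F)$ is compact: it is the continuous image of the standard simplex $\Delta_{n-1} = \{(\lambda_{1},\dots,\lambda_{n}) : \lambda_{i}\geqslant 0,\ \sum_{i}\lambda_{i}=1\}$ under the map $(\lambda_{i})_{i}\mapsto\sum_{i}\lambda_{i}x_{i}$, and $\Delta_{n-1}$ is compact.

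Next I would show that every point of $\Conv(K)$ lies within distance $\epsilon/2$ of $\Conv(F)$. Such a point has the form $\sum_{j}\mu_{j}y_{j}$ with $y_{j}\in K$, $\mu_{j}\geqslant 0$ and $\sum_{j}\mu_{j}=1$; choosing for each $j$ a centre $x_{i(j)}\in F$ with $\|y_{j}-x_{i(j)}\|\leqslant\epsilon/2$, the triangle inequality yields
$$\Big\|\sum_{j}\mu_{j}y_{j}-\sum_{j}\mu_{j}x_{i(j)}\Big\| \leqslant \sum_{j}\mu_{j}\|y_{j}-x_{i(j)}\| \leqslant \frac{\epsilon}{2},$$
where $\sum_{j}\mu_{j}x_{i(j)}\in\Conv(F)$. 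Since $\Conv(F)$ is compact, it admits a finite cover by balls of radius $\epsilon/2$; enlarging these to radius $\epsilon$ then covers $\Conv(K)$, which is therefore totally bounded.

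The heart of the argument --- and the only genuine obstacle --- is controlling total boundedness under the passage to the convex hull, since a convex combination may blend arbitrarily many points of $K$, so a finite $\epsilon$-net of $K$ cannot be re-used directly. The device that resolves this is the compactness of $\Conv(F)$ for a \emph{finite} $F$, which collapses the uncountably many convex combinations appearing in $\Conv(K)$ into an approximation by one compact, hence totally bounded, set. Completeness of $E$ is then used only at the last moment, to upgrade total boundedness of $\overline{\Conv(K)}$ to actual compactness.
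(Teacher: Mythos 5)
Your proof is correct and complete: the reduction of compactness to total boundedness, the approximation of $\Conv(K)$ by the compact set $\Conv(F)$ built on a finite $\epsilon/2$-net, and the final use of completeness are all sound. Note that the paper does not prove this statement at all --- it is quoted as a known result with a reference to Megginson --- and your argument is precisely the standard textbook proof of Mazur's theorem, so there is no divergence to report.
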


We now proceed to proving Theorem \ref{thm:QI<=>convex-cocompact} where the direct implication follows the proof of \cite[Proposition 4.3]{MP}.

\begin{proof}[Proof of Theorem \ref{thm:QI<=>convex-cocompact}]
Suppose first that the orbit map is a quasi-isometric embedding from $\Gamma$ to $\H^{\infty}$. It extends to a continuous boundary map $\partial \tau_{\rho}:\partial_{\infty}\Gamma \to \partial\H^{\infty}$ which is $\Gamma$-invariant (Proposition \ref{prop:extension_to_boundary}).

Fix a finite generating set $S$ for $\Gamma$. The Cayley graph $\Cay_{S}(\Gamma)$ is locally finite and the boundary $\partial_{\infty}\Gamma$ is compact (see for example \cite[Proposition 7.9]{GH}). Thus its image $\partial \tau_{\rho}(\partial_{\infty}\Gamma) \subset \partial \H^{\infty}$ is also compact for the cone topology on $\partial\H^{\infty}$. Denote by $\mathcal{C} \subset \H^{\infty}$ the closed convex hull of $\partial \tau_{\rho}(\partial_{\infty}\Gamma)$. With the Klein model of the hyperbolic space, $\H^{\infty}$ is identified with the unit ball $B$ of a real Hilbert space. The convex hull $\mathcal{C}$ is identified with the intersection of $B$ and the closed affine convex hull $\overline{\mathcal{C}}$ of $\partial \tau_{\rho}(\partial_{\infty}\Gamma)$ in $\overline{B}$. Theorem \ref{thm:Mazur compactness theorem} shows that $\overline{\mathcal{C}}$ is compact in $\overline{B}$. Therefore, $\mathcal{C}$ is locally compact and closed in $B$, so it is a closed and proper subspace of $\H^{\infty}$. Moreover, it follows from Proposition \ref{prop:QI=>strongly discrete+almost faithful} that the action of $\Gamma$ on $\mathcal{C}$ is strongly discrete.

In order to show the cocompactness of the action of $\Gamma$ on $\mathcal{C}$, it is then sufficient to show coboundedness since $\mathcal{C}$ is proper. Let $x_{0} \in \mathcal{C}$ and suppose for a contradiction that $\mathcal{C}$ contains a sequence $(y_{i})_{i\in\N}$ such that the distance $d(y_{i},\Gamma\cdot x_{0})$ goes to infinity. By strong discreteness of the action, we can suppose that $d(y_{i},\Gamma\cdot x_{0}) = d(y_{i},x_{0})$ up to replacing $y_{i}$ by one of its translates by $\Gamma$. Since $\overline{\mathcal{C}}$ is compact, $(y_{i})_{i\in\N}$ converges to a point $y \in \partial\mathcal{C} = \partial \tau_{\rho}(\partial_{\infty}\Gamma)$ up to picking a subsequence. Choose a sequence $(z_{i})_{i\in\N}$ in $\Cay_{S}(\Gamma)$ such that $\tau_{\rho}(z_{i})$ converges also to $y$. Thus the Gromov product
$$L_{ij} = (\tau_{\rho}(z_{i})\cdot y_{j})_{x_{0}} = \dfrac{1}{2}\left(d(\tau_{\rho}(z_{i}),x_{0})+d(y_{j},x_{0}) - d(\tau_{\rho}(z_{i}),y_{j})\right)$$
tends to infinity as $i,j \to \infty$. Let $m_{ij}$ be the point of the geodesic $[x_{0},y_{j}]$ at distance $L_{ij}$ from $x_{0}$. Since $d(y_{j},\Gamma\cdot x_{0}) = d(y_{j},x_{0}) \underset{j\to\infty}{\longrightarrow}+\infty$, we have $d(m_{ij},\Gamma\cdot x_{0}) = d(m_{ij},x_{0}) = L_{ij} \underset{i,j\to\infty}{\longrightarrow} +\infty$. By $\delta$-hyperbolicity, the distance from $m_{ij}$ to the geodesic $[x_{0},\tau_{\rho}(z_{i})]$ is bounded by some constant depending only on $\delta$. Since $\tau_{\rho}$ is a quasi-isometric embedding, the image of the geodesic $[e,z_{i}]$ in $\Cay_{S}(\Gamma)$ by $\tau_{\rho}$ is a quasi-geodesic between $x_{0}$ and $\tau_{\rho}(z_{i})$. Thus, by stability of quasi-geodesics (\cite[Theorem III.1.7]{BH}), the geodesic $[x_{0},\tau_{\rho}(z_{i})]$ is in the $R$-neighbourhood of $\tau_{\rho}(\Cay_{S}(\Gamma))$ for some fixed constant $R \geqslant 0$. Then the $m_{ij}$ remain at bounded distance from $\tau_{\rho}(\Cay_{S}(\Gamma))$ by triangle inequality so they also remain at bounded distance from $\tau_{\rho}(\Gamma)$ which is the orbit of $x_{0}$. This is in contradiction with $d(m_{ij},\Gamma\cdot x_{0}) \underset{i,j\to\infty}{\longrightarrow} +\infty$.

For the converse implication, let $\mathcal{C}$ be a convex and locally compact $\Gamma$-invariant subset of $\H^{\infty}$ on which $\Gamma$ acts properly and cocompactly. Then by Theorem \ref{thm:svarc-milnor version DSU}, the orbit map $\tau_{\rho} : \Gamma \to \mathcal{C}, \gamma \mapsto \gamma(x_{0})$ is a $\Gamma$--equivariant quasi-isometric embedding. And since the embedding of $\mathcal{C}$ to $\H^{\infty}$ is isometric, we get a quasi-isometric embedding $\tau_{\rho}:\Gamma \to \H^{\infty}$.
\end{proof}

\subsection{Stability of convex-cocompact representations}

Let $\Gamma$ be a finitely generated group. Let $S = \{\gamma_{1},\dots,\gamma_{m}\}$ be a generating family of $\Gamma$. Denote by $d_{S}$ the metric on the Cayley graph of $\Gamma$ associated to $S$. Given a representation $\rho:\Gamma\to \Isom(\H^{\infty})$ and a base point $x_{0}\in \H^{\infty}$, the orbit map $\tau_{\rho}:\Gamma \to \H^{\infty}$ given by
$$\tau_{\rho}(\gamma) = \rho(\gamma)(x_{0})$$
is $\Gamma$-equivariant. We will call $\rho$ a \textit{quasi-isometric representation} if its orbit map $\tau_{\rho}$ is a quasi-isometric embedding. 

Since $\H^{\infty}$ is $\delta$-hyperbolic, the group $\Gamma$ is Gromov-hyperbolic when $\rho:\Gamma\to\Isom(\H^{\infty})$ is a quasi-isometric representation.

\begin{theorem}[Local-to-global principle, \cite{CDP} Theorem $3.1.4$]\label{local-to-global}
Given $K \geqslant 1$, $C \geqslant 0$ and $\delta \geqslant 0$, there exist $\widehat{K}$, $\widehat{C}$ and $A$ so that if $J$ is an interval in $\R$, $X$ is a $\delta$-hyperbolic geodesic space and $h:J\to X$ is a $(K,C)$--quasi-isometric embedding when restricted to any connected subsegment of $J$ with length at most $A$, then $h$ is a $(\widehat{K},\widehat{C})$--quasi-isometric embedding.
\end{theorem}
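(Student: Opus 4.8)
The two halves of the quasi-isometric inequality behave very differently, and I would separate them at the outset. The upper bound $d(h(s),h(t)) \le \widehat{K}|s-t| + \widehat{C}$ is elementary and uses no hyperbolicity: given $s < t$ in $J$, subdivide $[s,t]$ into at most $\lceil |s-t|/A\rceil$ subintervals of length $\le A$, apply the local upper bound on each piece, and sum using the triangle inequality. The content of the theorem is therefore the reverse, anti-backtracking, bound $|s-t| \le \widehat{K}\,d(h(s),h(t)) + \widehat{C}$, which expresses that a path making definite local progress cannot fold back on itself in a $\delta$-hyperbolic space. My plan is to prove this by first establishing a Morse-type confinement lemma and then extracting the linear parameter bound from it.

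I would first prove a \emph{confinement lemma}: there is a constant $R = R(K,C,\delta)$ such that, once $A$ is chosen large enough (depending on $K,C,\delta$), the image $h([s,t])$ lies in the $R$-neighbourhood of the geodesic $\sigma = [h(s),h(t)]$ for every $s<t$. To prove it I would adapt the extremal-point argument used for genuine local geodesics. Let $p = h(u_0)$ realise the maximal distance $m$ from $h([s,t])$ to $\sigma$, and examine the window $[u_0 - A/2,\, u_0 + A/2]$ (with the obvious truncation when $u_0$ is within $A/2$ of an endpoint, which only makes the estimate easier since the window endpoint then lies on $\sigma$). On this window $h$ is a genuine $(K,C)$-quasi-isometric embedding, so by stability of quasi-geodesics (\cite[Theorem III.1.7]{BH}) its image lies within a uniform distance $R_0(K,C,\delta)$ of the honest geodesic joining the window endpoints. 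Replacing $p$ by its nearest point $p^{\ast}$ on that true geodesic, the standard thin-quadrilateral dichotomy yields that either $m$ is bounded by $O(\delta)+R_0$, or $m$ is at least roughly $\tfrac{A}{2K} - O(\delta) - R_0$; for $A$ large these two ranges are disjoint, and a continuity argument as the extremal window varies rules out the large alternative, giving $m \le R$.

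With confinement in hand I would bound the parameter length. Sample $[s,t]$ at spacing $A/2$, obtaining points $x_i = h(s + iA/2)$; the local lower bound forces $d(x_i,x_{i+1}) \ge \tfrac{A}{2K} - \tfrac{C}{K}$, which can be made large by taking $A$ large. Projecting each $x_i$ to a point $p_i$ on $\sigma$ displaces it by at most $R + O(\delta)$, and applying the local estimate to two-step windows $[s+iA/2,\, s+(i+2)A/2]$ shows that $d(x_i,x_{i+2})$ is large, which forces the $p_i$ to advance coarsely monotonically along $\sigma$ by a definite amount at each step. Hence the number of sample points is bounded by a linear function of the length $d(h(s),h(t))$ of $\sigma$, which is exactly the sought bound $|s-t| \le \widehat{K}\,d(h(s),h(t)) + \widehat{C}$. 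Collecting the constants produced along the way then yields $\widehat{K}$, $\widehat{C}$ and the threshold $A$, all depending only on $K,C,\delta$.

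The main obstacle is the confinement lemma. The difficulty is that, unlike a local geodesic, a local quasi-isometric embedding genuinely wiggles, so the broken geodesic through the sample points need not itself be a local geodesic and the extremal-point estimate cannot be run directly; it must be routed through the stability of quasi-geodesics, which is what lets me replace a short sub-quasi-geodesic by a genuine geodesic up to the controlled error $R_0$. Arranging that the two alternatives for $m$ are cleanly separated is precisely what forces $A$ to be taken large relative to $\delta$ and to the local constants $K,C$, and calibrating this threshold is where the real weight of the proof lies.
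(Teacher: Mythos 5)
This theorem is not proved in the paper: it is quoted from \cite{CDP} (Theorem 3.1.4), so your proposal can only be measured against the standard arguments in the literature. Your architecture — the elementary upper bound by subdivision, a Morse-type confinement lemma proved via stability of quasi-geodesics on windows of length $A$, then linear progress of projections along $\sigma$ — is the right one, and the confinement lemma itself is essentially sound. One caveat there: $h$ is not assumed continuous (quasi-isometric embeddings never are, in general), so the maximal distance $m$ need not be attained and a literal ``continuity argument as the extremal window varies'' is not available. But none is needed: taking a near-maximizer $p = h(u_0)$, the thin-quadrilateral case analysis \emph{combined with near-maximality of $m$} already produces a contradiction in the bad case (where $p^{\ast}$ is $2\delta$-close to one of the two sides joining a window endpoint to $\sigma$) as soon as $A$ exceeds a threshold of the form $2K(2R_0+4\delta+C)$; so this flaw is cosmetic.

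The genuine gap is in your final step. The two facts you invoke — $d(x_i,p_i)\leqslant R+O(\delta)$ and $d(x_i,x_{i+2})\geqslant (A-C)/K$ — do \emph{not} force the $p_i$ to advance coarsely monotonically along $\sigma$ once $K$ is large. The obstruction is that one-step distances are only bounded above by $KA/2+KC$, so two consecutive steps along $\sigma$ can differ by roughly $KA/2 - A/(2K)$, which dwarfs the two-step lower bound $\approx A/K$. Concretely, for $K=3$ the projection pattern $0,\ M,\ A/K,\ M+A/K,\ 0,\ M,\ A/K,\ M+A/K,\dots$ with $M = KA/2 - A/K$ satisfies every inequality you impose (one-step upper and lower bounds, two-step lower bounds, bounded displacement) while oscillating forever inside a segment of $\sigma$ of length $KA/2$; hence no linear bound on the number of sample points, and no bound $|s-t|\leqslant \widehat{K}d(h(s),h(t))+\widehat{C}$, can follow from those constraints alone. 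The missing ingredient is already in your toolkit: apply stability of quasi-geodesics to the two-step window itself to conclude that $x_{i+1}$ lies within $R_0$ of the geodesic $[x_i,x_{i+2}]$, which bounds the Gromov product $(x_i\cdot x_{i+2})_{x_{i+1}}$ by $R_0$ — a far stronger statement than largeness of $d(x_i,x_{i+2})$. Once consecutive distances $(A/2-C)/K$ are made large compared with $R_0$ and $\delta$, either the standard chain lemma for broken quasi-geodesics, or your projection argument re-run with confinement applied to the tail subintervals $[s+iA/2,\,t]$ (which is what actually forbids backtracking), gives $d(x_0,x_N)\geqslant cN$ and closes the proof.
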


Let $X$ be a $\delta$-hyperbolic geodesic metric space. The space of representations $\Hom (\Gamma,\Isom(X))$ can be embedded into $\Isom(X)^{m}$ by
$$\begin{array}{ccc}
    \Hom(\Gamma,\Isom(X)) & \to & \Isom(X)^{m}\\
    \rho & \mapsto & (\rho(\gamma_{1}),\dots,\rho(\gamma_{m})). 
\end{array}$$
Endow $\Isom(X)$ with the pointwise convergence topology. If $f\in \Isom(X)$, a basis of neighbourhoods of $f$ is given by the open sets of the form $$V(f,\epsilon,P) = \{g\in \Isom(X)\mid \forall x\in P\quad d(f(x),g(x)) < \epsilon\}$$
where $P$ is a finite set of points in $X$ and $\epsilon>0$. The topology on $\Hom(\Gamma,\Isom(X))$ is then inherited from the product topology on $\Isom(X)^{m}$.\\

The following theorem is due to Marden in the case of representations into $\Isom(\H^{3})$ and to Thurston for $\Isom(\H^{n})$, $n\in\N$. The same proof remains valid for any $\delta$-hyperbolic space. We follow the proof of \cite[Theorem 11.4]{Canary}.

\begin{theorem}
Let $X$ be a geodesic metric space. If $\Gamma$ is a finitely generated group and $\rho:\Gamma\to \Isom(X)$ is a quasi-isometric representation, then there exists a neighbourhood $U$ of $\rho$ in $\Hom(\Gamma,\Isom(X))$ such that if $\sigma\in U$, then $\sigma$ is a quasi-isometric representation. Moreover, we may choose $U$ so that all the representations $\sigma \in U$ have the same constants of quasi-isometry.
\end{theorem}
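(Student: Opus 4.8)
The plan is to apply the local-to-global principle (Theorem~\ref{local-to-global}) to upgrade a purely local quasi-isometry estimate into a global one, and to observe that the local estimate only involves finitely many group elements, whose orbits vary continuously in the pointwise convergence topology. Throughout I take $X$ to be $\delta$-hyperbolic, as required to invoke Theorem~\ref{local-to-global}.

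First I would replace the orbit map on vertices by a map on the whole Cayley graph: for a representation $\sigma$, extend $\tau_{\sigma}$ to $\bar\tau_{\sigma}:\Cay_{S}(\Gamma)\to X$ by sending each edge $[\gamma,\gamma s]$ to a geodesic of $X$ joining $\sigma(\gamma)(x_{0})$ and $\sigma(\gamma s)(x_{0})$. By equivariance every edge-image has length at most $D_{\sigma}:=\max_{s\in S\cup S^{-1}}d(x_{0},\sigma(s)(x_{0}))$, so the coarse-Lipschitz (bounded-expansion) bound is automatic for $\bar\tau_{\sigma}$, with a constant governed by $D_{\sigma}$, and the only real content is the reverse inequality. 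For $\sigma=\rho$ the map $\bar\tau_{\rho}$ is a $(K,C)$--quasi-isometric embedding; I then fix once and for all slightly larger local target constants $(K_{1},C_{1})$ (so that $\bar\tau_{\rho}$ satisfies the inequalities with room to spare) and feed $(K_{1},C_{1},\delta)$ into Theorem~\ref{local-to-global} to obtain a scale $A$ and global constants $(\widehat K,\widehat C)$ depending only on $(K_{1},C_{1},\delta)$.

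Next comes the localization. Any geodesic subsegment of $\Cay_{S}(\Gamma)$ of length at most $A$ passes only through vertices lying in the word-ball $F:=B_{S}(e,\lceil A\rceil+1)$; since each $\sigma(\gamma)$ is an isometry of $X$, the restriction of $\bar\tau_{\sigma}$ to such a subsegment is determined, up to post-composition with an isometry, by the finitely many orbit points $\{\tau_{\sigma}(\gamma):\gamma\in F\}$. I would then define the neighbourhood $U$ of $\rho$ so as to control exactly these points. Fixing a word in $S\cup S^{-1}$ for each element of $F$, let $P$ be the finite set of all points $\rho(\eta)(x_{0})$ where $\eta$ ranges over the suffixes of these words; note that $P$ consists of $\rho$-orbit points and so does not depend on $\sigma$. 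Setting $U=\prod_{i}V(\rho(\gamma_{i}),\epsilon,P)$, a telescoping estimate that replaces $\rho$ by $\sigma$ one generator at a time and uses that isometries are $1$--Lipschitz bounds $d(\tau_{\sigma}(\gamma),\tau_{\rho}(\gamma))$ by a sum of terms of the form $d(\sigma(s)(w),\rho(s)(w))$ with $w\in P$; hence $d(\tau_{\sigma}(\gamma),\tau_{\rho}(\gamma))<\epsilon'$ for every $\gamma\in F$, where $\epsilon'\leqslant N\epsilon$ and $N$ bounds the word length of elements of $F$.

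Finally I would close the argument. Perturbing each of the finitely many distances $d(\tau_{\rho}(\gamma_{1}),\tau_{\rho}(\gamma_{2}))$ with $\gamma_{1},\gamma_{2}\in F$ by at most $2\epsilon'$ preserves the quasi-isometry inequalities with the relaxed constants $(K_{1},C_{1})$ once $\epsilon$ is small enough; hence $\bar\tau_{\sigma}$ is a $(K_{1},C_{1})$--quasi-isometric embedding on every subsegment of length at most $A$, and Theorem~\ref{local-to-global} promotes this to a global $(\widehat K,\widehat C)$--quasi-isometric embedding along geodesics of $\Cay_{S}(\Gamma)$, whence $\tau_{\sigma}$ is a quasi-isometric embedding. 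Since $A$ and $(\widehat K,\widehat C)$ depend only on $(K,C,\delta)$ and not on $\sigma$, every $\sigma\in U$ is a quasi-isometric representation with common constants, which is the assertion. I expect the main obstacle to be precisely this localization step: one must argue that the quasi-isometry condition is genuinely \emph{finitary} — reducible to the finite ball $F$ — and that pointwise closeness on the generators, which a priori controls them only at the chosen test points of $P$, propagates through composition (via the telescoping with $\rho$-orbit test points) to control the orbit of every element of $F$.
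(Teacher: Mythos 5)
Your proposal is correct and follows essentially the same route as the paper's proof: extend the orbit map over the edges of the Cayley graph, use a pointwise-convergence neighbourhood controlling finitely many $\rho$-orbit points so that (by telescoping over generators and $\Gamma$-equivariance) the perturbed orbit map satisfies slightly relaxed quasi-isometry inequalities on all segments of length at most $A$, and then invoke the local-to-global principle to obtain uniform global constants. If anything, your write-up is more careful than the paper's at two points the paper glosses over — the explicit $\delta$-hyperbolicity hypothesis needed for Theorem~\ref{local-to-global}, and the quantified tolerance $\epsilon$ in the neighbourhood (where the paper's stated tolerance appears to be a typo).
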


\begin{proof}
Let $x_{0}\in X$ and suppose that the orbit map $\tau_{\rho}$ is a $(K,C)$--quasi-isometric embedding with respect to the finite generating set $S = \{\gamma_{1},\dots,\gamma_{m}\}$. Let $A > 0$ to be defined later. Let $U$ be an open neighbourhood of $\rho$ in $\Hom(\Gamma,\Isom(X))$ such that for all $\sigma \in U$ and $\gamma\in\Gamma$ with $d_{S}(id,\gamma) \leqslant A$, $d(\rho(\gamma)(x_{0}),\sigma(\gamma)(x_{0}))< 1$. It suffices to choose 
$$U = \prod_{i=1}^{m} V(\rho(\gamma_{i}),A+1,P)$$
where $P = \{\rho(\gamma)(x_{0}) \mid \gamma\in\Gamma\ \text{and}\ d_{S}(id,\gamma)\leqslant A\}$ is finite since $\#\{\gamma\in\Gamma \mid d_{S}(id,\gamma) \leqslant A\}<\infty$.\\
Since $\tau_{\rho}$ is a $(K,C)$--quasi-isometric embedding, then for all $\gamma \in \Gamma$,
$$\dfrac{1}{K}d_{S}(id,\gamma) - C \leqslant d(x_{0},\tau_{\rho}(\gamma)) \leqslant Kd_{S}(id,\gamma) + C.$$
Let $\sigma\in U$, we see that if $d_{S}(id,\gamma) \leqslant A$, then
$$\dfrac{1}{K}d_{S}(id,\gamma) - C - 1\leqslant d(x_{0},\tau_{\sigma}(\gamma)) \leqslant Kd_{S}(id,\gamma) + C + 1$$
and by $\Gamma$-equivariance of $\tau_{\sigma}$,
$$\dfrac{1}{K}d_{S}(\alpha,\beta) - C - 1\leqslant d(\tau_{\sigma}(\alpha),\tau_{\sigma}(\beta)) \leqslant Kd_{S}(\alpha,\beta) + C + 1$$
for all $\alpha,\beta \in \Gamma$ with $d_{S}(\alpha,\beta)\leqslant A$.

If $\Cay_{S}(\Gamma)$ is the Cayley graph of $\Gamma$ with respect to $S$, then we may extend $\tau_{\sigma}$ to $\Cay_{S}(\Gamma)$ by mapping all the edges to geodesics in $X$. The resulting map is a $(K',C')$--quasi-isometric embedding on all geodesic segments in $\Cay_{S}(\Gamma)$ of length at most $A$ for some $K' \geqslant 1$ and $C' \geqslant 0$ depending only on $K$ and $C$. We may then choose $A$ according to Theorem \ref{local-to-global}. This theorem provides some $\widehat{K}$ and $\widehat{C}$ such that $\tau_{\sigma}$ is a $(\widehat{K},\widehat{C})$--quasi-isometric embedding on all geodesic segments in $\Cay_{S}(\Gamma)$. Thus $\tau_{\sigma}$ is a $(\widehat{K},\widehat{C})$--quasi-isometric embedding on the whole Cayley graph $\Cay_{S}(\Gamma)$.
\end{proof}

\subsection{Convex-coboundedness and convex-cocompactness}

The argument of Theorem \ref{thm:QI<=>convex-cocompact} to get a locally compact convex set $\mathcal{C} \subset \H^{\infty}$ on which the group $\Gamma$ acts cocompactly fails when the Cayley graph of $\Gamma$ is not locally finite. The assumption that $\Gamma$ is finitely generated is then important for the proof to hold. To illustrate this, we will consider embeddings of trees into $\H^{\infty}$ introduced in \cite{BIM} to produce a representation of an infinitely generated group into $\Isom(\H^{\infty})$ which is convex-cobounded but not convex-cocompact even though this embedding is quasi-isometric.

\begin{theorem}[\cite{BIM}, Theorems $A$ and $8.1$]\label{thm:BIM}
Let $\mathcal{T}$ be a simplicial tree, denote by $V$ the set of vertices of $\mathcal{T}$ and by $\H$ the hyperbolic space of dimension $|V|-1$, $\H = \H^{|V|-1}$. Let $d_{\mathcal{T}}$ be the metric on $\mathcal{T}$ that assigns length $1$ to all the edges of $\mathcal{T}$. Then for every $\lambda > 1$ there exists an embedding $\psi : \mathcal{T} \to \H$ and a representation $\rho : \Aut(\mathcal{T}) \to \Isom(\H)$ such that
\begin{enumerate}
    \item the map $\psi$ is $\Aut(\mathcal{T})$-equivariant for $\rho$.
    \item $\lambda^{d_{\mathcal{T}}(x,y)} = \cosh d(\psi(x),\psi(y))$ for any $x,y \in V$.
    \item $\psi$ extends to an equivariant boundary map $\partial \psi : \partial \mathcal{T} \to \partial \H$ which is a homeomorphism onto its image.
    \item $\psi(V)$ is cobounded in the convex hull of the image of $\partial\psi$.
\end{enumerate}
\end{theorem}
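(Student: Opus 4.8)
The plan is to realise the vertices of $\mathcal{T}$ as points on the hyperboloid model of $\H$ whose mutual pseudo-distances are exactly those prescribed by (2), and then to read off all four conclusions from the single algebraic input that the kernel $K(x,y)=\lambda^{d_{\mathcal{T}}(x,y)}$ has the correct signature. In the hyperboloid model one has $\cosh d(u,v)=-B(u,v)$, so condition (2) amounts to producing vectors $(v_x)_{x\in V}$ with $B(v_x,v_y)=-\lambda^{d_{\mathcal{T}}(x,y)}$ and $B(v_x,v_x)=-1$. Such a family exists in a quadratic space of signature $(N,1)$ precisely when the symmetric matrix $C=(\lambda^{d_{\mathcal{T}}(x,y)})_{x,y}$ has exactly one positive eigenvalue, with $N$ equal to the number of negative eigenvalues. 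Hence the heart of the matter is to show that $C$ has signature $(1,|V|-1)$.

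First I would establish this signature for finite trees by induction on the number of vertices, building the tree up one leaf at a time. If $w$ is a leaf attached to a vertex $u$, then $d_{\mathcal{T}}(w,x)=d_{\mathcal{T}}(u,x)+1$ for every other vertex $x$, so the $w$-column of $C$ equals $\lambda$ times the $u$-column in every entry except on the diagonal. Performing the symmetric congruence that subtracts $\lambda$ times the $u$-row and $u$-column from the $w$-row and $w$-column decouples $w$: the matrix becomes block-diagonal, with the Gram matrix of the smaller tree in one block and the scalar $1-\lambda^2<0$ in the other. Since congruence preserves signature and $1-\lambda^2$ contributes exactly one negative eigenvalue, the inductive step goes through from the base case of a single vertex, of signature $(1,0)$. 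For an infinite tree I would pass from these finite computations to a global realisation: every finite principal submatrix of $C$ has signature $(1,\cdot)$, and this consistency lets one build, by a reproducing-kernel (GNS-type) construction, a family $(v_x)$ inside a separable quadratic space of signature $(\infty,1)$, i.e.\ inside $\H^{|V|-1}$. This defines $\psi$ on $V$ and yields (2). Conclusion (1) is then automatic: every $g\in\Aut(\mathcal{T})$ preserves $d_{\mathcal{T}}$, hence fixes the kernel $C$, so the assignment $v_x\mapsto v_{gx}$ preserves $B$ on the (dense) span of the $v_x$ and extends to a unique isometry $\rho(g)$ of $\H$ with $\rho(g)\psi(x)=\psi(gx)$; compatibility of the assignments makes $\rho$ a homomorphism.

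For the boundary statement (3) and coboundedness (4) I would first turn (2) into a coarse-geometric fact. From $\cosh s\sim\tfrac12 e^{s}$ one gets $d(\psi(x),\psi(y))=(\log\lambda)\,d_{\mathcal{T}}(x,y)+O(1)$, so $\psi$ is a quasi-isometric embedding of $(V,d_{\mathcal{T}})$ into the $\CAT(-1)$ space $\H$; equivalently the Gromov product of $\psi(x),\psi(y)$ based at $\psi(o)$ equals $(\log\lambda)$ times the tree Gromov product $(x\cdot y)_o$ up to an additive constant. This forces $\psi$ to send distinct ends of $\mathcal{T}$ to distinct points and converging sequences to converging sequences, so it extends to an equivariant map $\partial\psi$ that is a homeomorphism onto its image, giving (3). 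For (4) I would invoke the stability of quasi-geodesics, as in \cite[Theorem III.1.7]{BH}: a bi-infinite geodesic of $\H$ joining two points of $\partial\psi(\partial\mathcal{T})$ stays within bounded distance of the $\psi$-image of the corresponding tree geodesic, every point of that tree geodesic is within distance $\tfrac12$ of a vertex, and in a hyperbolic space the convex hull of the limit set lies in a bounded neighbourhood of the union of such geodesics; hence the whole convex hull of $\partial\psi(\partial\mathcal{T})$ lies in a bounded neighbourhood of $\psi(V)$.

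The \emph{main obstacle} is concentrated in two places. The genuinely substantive point is the signature computation, since it is what makes the embedding exist at all: the leaf-induction settles finite trees cleanly, but promoting it to an honest Hilbert-space realisation for infinite $V$ (controlling separability and the passage to an $(\infty,1)$ form) requires the reproducing-kernel machinery rather than bare linear algebra. The second, more technical difficulty is conclusion (4): coboundedness is a statement about the convex hull of the limit set, not merely about the images of the vertices, so it cannot be read off from (2) directly and genuinely needs the stability of quasi-geodesics together with the quasi-convexity of hulls in hyperbolic spaces.
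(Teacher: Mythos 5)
First, a structural point: the paper does not prove this statement at all --- it is quoted, with attribution, from \cite{BIM} (Theorems A and 8.1) and is then used as a black box to build the $\F_{\infty}$ counterexample --- so your proposal can only be compared with the argument of the cited source. Your overall strategy (realise the kernel $\lambda^{d_{\mathcal{T}}(x,y)}$ as $-B(v_x,v_y)$ for unit timelike vectors, which reduces existence of $\psi$ to showing that the matrix $\bigl(\lambda^{d_{\mathcal{T}}(x,y)}\bigr)_{x,y}$ has exactly one positive eigenvalue, then extend by a GNS-type completion and transport automorphisms) is the same philosophy as in \cite{BIM}, where the embedding comes from exhibiting $\lambda^{d_{\mathcal{T}}}$ as a kernel of hyperbolic type. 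Your leaf-peeling congruence is correct: attaching a leaf $w$ to $u$ and subtracting $\lambda$ times the $u$-row and column decouples $w$ with the block $1-\lambda^{2}<0$, so finite trees have signature $(1,|V|-1)$. Three smaller points are glossed over but fixable: a finite subset of $V$ need not span a subtree, so the claim about arbitrary finite principal submatrices needs the spanned subtree plus eigenvalue interlacing (or one works with subtrees only); the map $v_x \mapsto v_{gx}$ is defined only on a dense subspace and one must check it is \emph{bounded} for the Hilbert norm before extending it to the completion (estimate $B(w,v_{g^{-1}o})$ using the splitting $\R v_o \oplus v_o^{\perp_B}$, or quote the theory of isometries of Pontryagin spaces --- a densely defined $B$-isometry is not automatically continuous); and one should note all $v_x$ lie on the same sheet of the hyperboloid, which holds because $B(v_x,v_y)=-\lambda^{d}\leqslant -1$.

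The genuine gap is in your treatment of (4), and you half-acknowledge it. Your reduction --- convex hull of the limit set lies in a bounded neighbourhood of the union $J(\Lambda)$ of geodesics joining limit points; each such geodesic lies in a bounded neighbourhood of the $\psi$-image of the corresponding tree geodesic by stability of quasi-geodesics; tree geodesics lie in the $\tfrac12$-neighbourhood of $V$ --- is fine in its second and third steps (stability of quasi-geodesics needs no properness, and the paper itself invokes it in $\H^{\infty}$). But the first step is not a quotable fact about ``hyperbolic spaces'': $\delta$-hyperbolicity controls \emph{quasiconvex} hulls (joins), not genuine convex hulls, and in general Gromov-hyperbolic geodesic spaces the convex hull of even a two-point set can be unbounded. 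What you need is the specific statement that in real hyperbolic space, including $\H^{\infty}$, the closed convex hull of $\Lambda\subset\partial\H^{\infty}$ lies in a \emph{uniformly} bounded neighbourhood of $J(\Lambda)$; this is precisely where the content of Theorem 8.1 of \cite{BIM} sits, so quoting it as known is circular. It is, however, provable by a dimension-free Klein-model argument: if $d(x,J(\Lambda))>R$, then after centring the Klein model at $x$ every pair of points of $\Lambda$ subtends an angle less than $2\arccos(\tanh R)$ at $x$, so for $R$ larger than a universal constant $\Lambda$ lies in a spherical cap of angular radius less than $\pi/2$; the affine (i.e.\ Klein-model) convex hull of such a cap is contained in a half-space cut out by a hyperbolic hyperplane avoiding $x$, whence $x\notin\Conv(\Lambda)$. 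Since this argument is linear-algebraic it survives in infinite dimension, and with that lemma supplied your proof of (4), and hence the whole proposal, goes through.
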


\begin{remark}
The second point of the theorem implies that $\psi$ is a $(K,C)$--quasi-isometric embedding on the vertices of $\mathcal{T}$ for $K = \max\left\{\ln(\lambda),\dfrac{1}{\ln(\lambda)}\right\}$ and $C = \dfrac{\ln 2}{\ln \lambda}$.
\end{remark}

Let $\mathcal{T} = (V,E)$ be the regular tree of countably infinite valency, let $d_{\mathcal{T}}$ be the combinatorial distance on $\mathcal{T}$ (every edge has length $1$). The group $\Aut(\mathcal{T})$ acts cocompactly on $\mathcal{T}$ and a fundamental region is a half-edge of $\mathcal{T}$, \ie an edge of the first barycentric subdivision of $\mathcal{T}$. The free group $\F_{\infty}$ is a subgroup of $\Aut(\mathcal{T})$ and a fundamental region for the action $\F_{\infty} \action \mathcal{T}$ is given by the tree $\mathcal{T}'$ corresponding to the ball of radius $\dfrac{1}{2}$ around a vertex, \ie a vertex of $\mathcal{T}$ and all its adjacent edges in the first barycentric subdivision of $\mathcal{T}$.

\begin{proposition}\label{prop:cobounded action on tree}
The action of $\F_{\infty}$ on $\mathcal{T}$ is cobounded but not cocompact.
\end{proposition}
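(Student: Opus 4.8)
The plan is to use that $\F_{\infty}$ acts simply transitively on the vertex set $V$ of $\mathcal{T}$, which is precisely the statement that $\mathcal{T}$ is the Cayley graph of $\F_{\infty}$ for a free generating family $\{a_{i}\}_{i\in\N}$, with $\F_{\infty}$ acting by left translations. Coboundedness is then immediate. Fixing a base vertex $v_{0}$, any point $x\in\mathcal{T}$ lies on an edge and hence within distance $\tfrac12$ of one of its endpoints $w$; since the action is transitive on $V$ we may write $w = g\cdot v_{0}$ for some $g\in\F_{\infty}$, so that $g^{-1}\cdot x\in B(v_{0},1)$, i.e. $x\in g\cdot B(v_{0},1)$. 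Hence $\F_{\infty}\cdot B(v_{0},1) = \mathcal{T}$ and the action is cobounded with $\sigma = 1$.

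For the failure of cocompactness the first thing I would record is that the fundamental region $\mathcal{T}'$, although bounded, is \emph{not} compact: it contains the midpoints $m_{i}$ of the infinitely many edges issuing from $v_{0}$, and for $i\neq j$ the geodesic from $m_{i}$ to $m_{j}$ runs through $v_{0}$, so $d(m_{i},m_{j}) = 1$; thus $\{m_{i}\}_{i\in\N}$ is an infinite $1$-separated set and $\mathcal{T}'$ is not totally bounded. This is exactly the manifestation of $\mathcal{T}$ failing to be locally compact, because of the infinite valency at each vertex. The main obstacle, and the real content of the proposition, is that non-compactness of \emph{this particular} region does not by itself preclude some other, cleverly spread-out compact set from having its $\F_{\infty}$-orbit cover $\mathcal{T}$; closing that gap is where the argument has to do genuine work.

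To rule that out I would pass to the orbit space $\F_{\infty}\backslash\mathcal{T}$ equipped with the pseudometric $\bar d(\bar x,\bar y) = \inf_{g\in\F_{\infty}} d(x,g\cdot y)$, for which the projection $\pi:\mathcal{T}\to\F_{\infty}\backslash\mathcal{T}$ is $1$-Lipschitz. The key computation is that $\bar d(\bar m_{i},\bar m_{j}) = 1$ for $i\neq j$: distinct free generators give edges lying in distinct $\F_{\infty}$-orbits (no relation in the free group identifies $\{v_{0},a_{i}\cdot v_{0}\}$ with $\{v_{0},a_{j}\cdot v_{0}\}$), and in a tree two distinct edges meet in at most one vertex, so the nearest representatives of the two orbits share an endpoint and their midpoints lie at distance exactly $1$. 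Hence $\{\bar m_{i}\}_{i\in\N}$ is an infinite $1$-separated set inside the \emph{whole} quotient. Now suppose the action were cocompact, say $\F_{\infty}\cdot K = \mathcal{T}$ with $K$ compact. Then $K$ is totally bounded, and since $\pi(K) = \pi(\F_{\infty}\cdot K) = \F_{\infty}\backslash\mathcal{T}$ while $\pi$ is $1$-Lipschitz, the entire quotient would be totally bounded; but a totally bounded (pseudo)metric space cannot contain an infinite $1$-separated set, a contradiction. I favour this pseudometric route precisely because it sidesteps the delicate point that, for this non-proper (indeed not strongly discrete) action, the quotient topology need not agree with the ``infinite rose'' metric topology one might naively draw; total boundedness transports cleanly through $\pi$ regardless. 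This shows no compact set has orbit covering $\mathcal{T}$, so the action is not cocompact.
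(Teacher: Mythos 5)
Your proof is correct and takes essentially the same approach as the paper: both identify $\mathcal{T}$ with the Cayley graph of $\F_{\infty}$, get coboundedness from transitivity on vertices, and rule out cocompactness by showing that the midpoints of the edges issuing from a base vertex project to an infinite $1$-separated set in the metric quotient $\mathcal{T}/\F_{\infty}$. The only difference is one of bookkeeping: you make explicit, via total boundedness and the $1$-Lipschitz projection, the bridging step (a compact set with full orbit would force the quotient to be totally bounded) that the paper leaves implicit.
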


\begin{proof}
Let $S = \{s_{1},s_{2},\dots\}$ be a system of generators of $\F_{\infty}$. The tree $\mathcal{T}$ can be seen as the Cayley graph of $\F_{\infty}$ associated to $S$ where all the edges around each vertex are labelled with one element of $S \cup S^{-1}$ (two edges starting from the same vertex cannot have the same label). The free group $\F_{\infty}$ acts on $\mathcal{T}$ by sending an edge onto another edge with the same label. This action is also transitive on the vertices of $\mathcal{T}$, so the quotient $\widehat{\mathcal{T}} = \mathcal{T}/\F_{\infty}$ is isomorphic to a single vertex with infinitely many loops of length $1$ around it, hence it is bounded. To show that it is not compact, let $v$ be a vertex of $\mathcal{T}$ and for all $i\in\N$, let $m_{i}$ be the point at distance $\dfrac{1}{2}$ from $v$ on the edge labelled by $s_{i}$. The distance $d_{\mathcal{T}}$ on $\mathcal{T}$ induces a distance $d_{\widehat{\mathcal{T}}}$ on the quotient such that for all $i \neq j \in \N$, $d_{\widehat{\mathcal{T}}}(m_{i},m_{j}) = 1$. Thus the sequence $(m_{i})_{i\in\N}$ has no convergent subsequence and $\widehat{\mathcal{T}}$ is not compact.
\end{proof}

Fix $\lambda > 1$. By Theorem \ref{thm:BIM}, there is a representation $\rho = \rho_{\lambda}:\F_{\infty} < \Aut(\mathcal{T}) \to \Isom(\H^{\infty})$ and an $\Aut(\mathcal{T})$--equivariant quasi-isometric embedding $\psi = \psi_{\lambda}:\mathcal{T} \to \H^{\infty}$ which extends to the boundary $\partial\mathcal{T}$.

Denote by $\mathcal{C}_{\lambda}$ the closed convex hull of $\partial\psi(\partial\mathcal{T})$ in $\H^{\infty}$ and let $\mathcal{C}_{0}$ be the closed convex hull of $\psi(\mathcal{T})$. Both $\mathcal{C}_{\lambda}$ and $\mathcal{C}_{0}$ are $\Aut(\mathcal{T})$--invariant and $\mathcal{C}_{0}$ is the unique minimal invariant closed convex set in $\H^{\infty}$, thus we have $\mathcal{C}_{0} \subset \mathcal{C}_{\lambda}$. However they are not equal in general, see \cite[Section 5.A]{MP}.

\begin{proposition}
The group $\rho(\F_{\infty}) < \Isom(\H^{\infty})$ is convex-cobounded.
\end{proposition}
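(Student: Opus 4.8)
The plan is to exhibit an explicit invariant convex set on which $\rho(\F_{\infty})$ acts coboundedly, namely $\mathcal{C}_{\lambda}$, the closed convex hull of $\partial\psi(\partial\mathcal{T})$. First I would record that $\mathcal{C}_{\lambda}$ is convex by construction and that it is $\Aut(\mathcal{T})$-invariant: since $\partial\psi$ is equivariant (point $3$ of Theorem \ref{thm:BIM}), the set $\partial\psi(\partial\mathcal{T})$ is $\Aut(\mathcal{T})$-invariant, hence so is its closed convex hull. In particular $\mathcal{C}_{\lambda}$ is invariant under the subgroup $\rho(\F_{\infty})$, so it is a legitimate candidate for the convex set required in the definition of convex-coboundedness.

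Next I would transfer the coboundedness of the tree action to $\mathcal{C}_{\lambda}$ using equivariance together with the transitivity of $\F_{\infty}$ on vertices. Fix a vertex $v_{0} \in V$ and use $\psi(v_{0})$ as a base point in $\mathcal{C}_{\lambda}$. Because $\mathcal{T}$ is the Cayley graph of $\F_{\infty}$, the action of $\F_{\infty}$ is transitive on vertices (as established in the proof of Proposition \ref{prop:cobounded action on tree}), so $\F_{\infty}\cdot v_{0} = V$. The equivariance relation $\psi(\gamma\cdot x) = \rho(\gamma)(\psi(x))$ then gives
$$\rho(\F_{\infty})\cdot\psi(v_{0}) = \psi(\F_{\infty}\cdot v_{0}) = \psi(V).$$
Thus the orbit of $\psi(v_{0})$ under $\rho(\F_{\infty})$ is exactly $\psi(V)$.

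To conclude I would invoke point $4$ of Theorem \ref{thm:BIM}, which asserts precisely that $\psi(V)$ is cobounded in the convex hull of the image of $\partial\psi$, that is, in $\mathcal{C}_{\lambda}$: there exists $\sigma > 0$ such that every point of $\mathcal{C}_{\lambda}$ lies within distance $\sigma$ of $\psi(V)$. Combining this with the identification of $\psi(V)$ as the $\rho(\F_{\infty})$-orbit of $\psi(v_{0})$ yields
$$\rho(\F_{\infty})\cdot B(\psi(v_{0}),\sigma) = \mathcal{C}_{\lambda},$$
so the restriction of $\rho(\F_{\infty})$ to $\mathcal{C}_{\lambda}$ is cobounded and $\rho(\F_{\infty})$ is convex-cobounded. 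I do not expect a genuine obstacle here: the entire argument is an assembly of equivariance, transitivity on vertices, and point $4$ of Theorem \ref{thm:BIM}. The one point deserving care is checking that ``cobounded in the convex hull of $\partial\psi$'' in the statement of Theorem \ref{thm:BIM} is literally coboundedness in $\mathcal{C}_{\lambda}$ with respect to the intrinsic metric, and that using the orbit point $\psi(v_{0})$ rather than an abstract center does not affect the definition, since coboundedness is insensitive to the choice of base point.
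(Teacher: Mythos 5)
Your proof is correct and follows essentially the same route as the paper: invariance of $\mathcal{C}_{\lambda}$, equivariance of $\psi$, and point $4$ of Theorem \ref{thm:BIM}. The only (immaterial) difference is that you use transitivity of $\F_{\infty}$ on vertices to identify the orbit of $\psi(v_{0})$ with $\psi(V)$, whereas the paper writes $\psi(\mathcal{T}) = \rho(\F_{\infty})\cdot\psi(\mathcal{T}')$ for a bounded fundamental region $\mathcal{T}'$; your variant is if anything slightly cleaner, since it only uses $\psi$ on vertices, where the statements of Theorem \ref{thm:BIM} apply verbatim.
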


\begin{proof}
The embedding $\psi: \mathcal{T}\to \H^{\infty}$ being $\Aut(\mathcal{T})$--equivariant, we have $\psi(\mathcal{T}) = \rho(\F_{\infty}) \cdot \psi(\mathcal{T}')$, where $\mathcal{T}'$ is a bounded fundamental region for $\F_{\infty} \action \mathcal{T}$. By the fourth point of Theorem \ref{thm:BIM}, $\psi(\mathcal{T})$ is cobounded in $\mathcal{C}_{\lambda}$, so $\rho(\F_{\infty})$ acts coboundedly on $\mathcal{C}_{\lambda}$ since $\psi(\mathcal{T}')$ is bounded.
\end{proof}

\begin{lemma}\label{lem:cocompact on minimal convex set}
Let $G < \Aut(\mathcal{T})$. If $\rho(G)$ acts cocompactly on any invariant closed convex subset $\mathcal{C} \subset \H^{\infty}$, then the action of $G$ on the minimal invariant convex set $\mathcal{C}_{0}$ via $\rho$ is also cocompact.
\end{lemma}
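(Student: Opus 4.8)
The plan is to transfer cocompactness along the nearest-point projection onto $\mathcal{C}_0$, using the $\CAT(0)$ geometry of $\H^{\infty}$. Since $\H^{\infty}$ is a complete $\CAT(-1)$, hence $\CAT(0)$, space and $\mathcal{C}_0$ is closed and convex, there is a well-defined nearest-point projection $\pi : \H^{\infty} \to \mathcal{C}_0$. Two properties of $\pi$ will do all the work. First, $\pi$ is $1$-Lipschitz (distance nonincreasing), the standard behaviour of convex projections in $\CAT(0)$ spaces; in particular it is continuous. Second, $\pi$ is $\rho(G)$-equivariant: for $g \in G$ the isometry $\rho(g)$ preserves $\mathcal{C}_0$, so $\rho(g)\pi(x)$ lies in $\mathcal{C}_0$ and realizes the distance $d(\rho(g)x,\mathcal{C}_0)$, whence by uniqueness of the nearest point $\pi(\rho(g)x)=\rho(g)\pi(x)$.

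Next I would record the containment $\mathcal{C}_0 \subseteq \mathcal{C}$. As $\mathcal{C}_0$ is the unique minimal invariant closed convex subset of $\H^{\infty}$ and $\mathcal{C}$ is an invariant closed convex subset, minimality forces $\mathcal{C}_0 \subseteq \mathcal{C}$; in particular $\pi$ restricts to the identity on $\mathcal{C}_0$.

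With these two inputs the conclusion is essentially formal. By hypothesis there is a compact set $K \subseteq \mathcal{C}$ with $\rho(G)\cdot K = \mathcal{C}$. Set $K_0 = \pi(K)$, which is a compact subset of $\mathcal{C}_0$ since $\pi$ is continuous. Given any $y \in \mathcal{C}_0 \subseteq \mathcal{C}$, write $y = \rho(g)\cdot k$ with $g \in G$ and $k \in K$; applying $\pi$ and using $\pi(y)=y$ together with equivariance gives $y = \pi(\rho(g)k) = \rho(g)\pi(k) \in \rho(G)\cdot K_0$. Hence $\rho(G)\cdot K_0 = \mathcal{C}_0$ with $K_0$ compact, which is precisely cocompactness of the $G$-action on $\mathcal{C}_0$.

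The only genuinely delicate points are the two structural inputs: the existence of an equivariant distance-nonincreasing projection, which rests on the $\CAT(0)$ geometry of $\H^{\infty}$ and on $\mathcal{C}_0$ being closed, convex, and $\rho(G)$-invariant; and the containment $\mathcal{C}_0 \subseteq \mathcal{C}$, which is where the minimality of $\mathcal{C}_0$ enters. I expect the main obstacle to be ensuring that the hypothesis is applied to a convex set $\mathcal{C}$ large enough to contain $\mathcal{C}_0$, so that the minimality step is legitimate rather than comparing $\mathcal{C}_0$ with a set invariant only under a proper subgroup $G < \Aut(\mathcal{T})$; once the containment is secured, the projection argument is completely mechanical.
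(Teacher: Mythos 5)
Your proof is correct, but it reaches the conclusion through more machinery than the paper uses. The paper's argument is purely set-theoretic: writing $\mathcal{C} = \rho(G)\cdot A$ with $A$ compact, it takes $x \in \mathcal{C}_{0} \subseteq \mathcal{C}$, so $x \in \rho(g)A$ for some $g \in G$, and then uses invariance of $\mathcal{C}_{0}$ to get $x \in \rho(g)A \cap \mathcal{C}_{0} = \rho(g)\left(A \cap \mathcal{C}_{0}\right)$; hence $\mathcal{C}_{0} = \rho(G)\cdot\left(A\cap\mathcal{C}_{0}\right)$, and $A \cap \mathcal{C}_{0}$ is compact as a closed subset of the compact set $A$. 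Where you transport the compact set $K$ into $\mathcal{C}_{0}$ by the nearest-point projection $\pi$ (which requires existence, uniqueness, continuity and $\rho(G)$-equivariance of the $\CAT(0)$ projection), the paper simply intersects $K$ with $\mathcal{C}_{0}$, which requires nothing beyond $\mathcal{C}_{0}$ being closed and invariant; in particular the intersection trick works for a group acting on an arbitrary metric space, with no $\CAT(0)$ hypothesis at all. Both arguments hinge on the same containment $\mathcal{C}_{0} \subseteq \mathcal{C}$, and you are right to single it out as the delicate point: the sets $\mathcal{C}$ in the hypothesis are a priori only $\rho(G)$-invariant, while $\mathcal{C}_{0}$ is described as the minimal invariant set for the full group $\Aut(\mathcal{T})$, so the containment needs the discussion preceding the lemma (where $\mathcal{C}_{0}$, the closed convex hull of $\psi(\mathcal{T})$, is identified as the unique minimal invariant closed convex set) rather than abstract minimality alone. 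Note, however, that the paper's own proof also simply asserts $\mathcal{C}_{0} \subset \mathcal{C}$ at this step, so your proof and the paper's are on equal footing there; the only real difference is projection versus intersection, and the intersection is the more economical of the two.
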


\begin{proof}
Suppose that $\mathcal{C}= \rho(G)\cdot A$, where $A \subset \H^{\infty}$ is compact. Let $x \in \mathcal{C}_{0}$. Since $\mathcal{C}_{0} \subset \mathcal{C}$, there exists $g \in G$ such that $x \in \rho(g)A$. Then $x \in \rho(g)A \cap \mathcal{C}_{0} = \rho(g)(A \cap \mathcal{C}_{0})$ by invariance of $\mathcal{C}_{0}$. Therefore $\mathcal{C}_{0} = \rho(G)\cdot \left(A \cap \mathcal{C}_{0}\right)$. Moreover, $A \cap \mathcal{C}_{0}$ is closed in $A$, so $A \cap \mathcal{C}_{0}$ is compact.
\end{proof}

This lemma implies that in order to prove that $\rho(\F_{\infty}) < \Isom(\H^{\infty})$ is not convex-cocompact, it is sufficient to show this property for the action of $\F_{\infty}$ on $\mathcal{C}_{0}$ via $\rho$. 

\begin{proposition}\label{prop:non-cocompact action on tree}
    The action of $\rho(\F_{\infty})$ on $\psi(\mathcal{T})$ is not cocompact.
\end{proposition}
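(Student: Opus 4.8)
The plan is to transfer the non-cocompactness of the action $\F_{\infty}\action\mathcal{T}$ established in Proposition~\ref{prop:cobounded action on tree} to the action of $\rho(\F_{\infty})$ on $\psi(\mathcal{T})$, exploiting that $\psi$ is an $\F_{\infty}$-equivariant homeomorphism onto its image. The underlying principle is that cocompactness is invariant under equivariant conjugation by a homeomorphism: since $\psi$ intertwines the two actions and is a homeomorphism from $\mathcal{T}$ onto $\psi(\mathcal{T})$, a compact fundamental set on one side would produce one on the other. Thus the abstract failure of cocompactness on the tree should force the failure of cocompactness of its image in $\H^{\infty}$.

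Concretely, I would argue by contradiction. Suppose $\rho(\F_{\infty})$ acts cocompactly on $\psi(\mathcal{T})$, so there is a compact set $A\subset\psi(\mathcal{T})$ with $\rho(\F_{\infty})\cdot A=\psi(\mathcal{T})$. By Theorem~\ref{thm:BIM} the map $\psi$ is an embedding, hence a homeomorphism onto $\psi(\mathcal{T})$ with the subspace topology; as compactness is intrinsic, $A$ is compact as a subspace of $\psi(\mathcal{T})$ and therefore $\psi^{-1}(A)\subset\mathcal{T}$ is compact. Using the equivariance and injectivity of $\psi$,
$$\F_{\infty}\cdot\psi^{-1}(A)=\psi^{-1}\big(\rho(\F_{\infty})\cdot A\big)=\psi^{-1}\big(\psi(\mathcal{T})\big)=\mathcal{T},$$
so $\F_{\infty}$ would act cocompactly on $\mathcal{T}$, contradicting Proposition~\ref{prop:cobounded action on tree}.

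The delicate step, and the one I expect to be the main obstacle, is the compactness transfer $A\rightsquigarrow\psi^{-1}(A)$, which is precisely where the full strength of ``$\psi$ is a topological embedding'' is used rather than merely its quasi-isometric behaviour. The quasi-isometric estimate of Theorem~\ref{thm:BIM} only yields that preimages of bounded sets are bounded, and this is insufficient here: the tree $\mathcal{T}$ has infinite valency and is not locally compact, so a closed ball such as $\overline{B}(v,1)$ is bounded but far from compact. One sees the mechanism explicitly through the edge midpoints $m_{i}$ at a vertex $v$ from the proof of Proposition~\ref{prop:cobounded action on tree}, whose classes are pairwise at distance $1$ in $\widehat{\mathcal{T}}=\mathcal{T}/\F_{\infty}$: the neighbours $w_{i}=\gamma_{i}\cdot v$ of $v$ satisfy $\psi(w_{i})=\rho(\gamma_{i})\psi(v)$, and by point~(2) of Theorem~\ref{thm:BIM} they lie at the fixed distance $\cosh^{-1}(\lambda)$ from $\psi(v)$ while being pairwise at distance $\cosh^{-1}(\lambda^{2})$. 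Hence the image of any neighbourhood of a vertex already contains an infinite, uniformly separated — thus non-totally-bounded — subset of $\H^{\infty}$, confirming that the non-compact pieces of $\mathcal{T}$ remain genuinely non-compact after applying $\psi$, so that no compact $A$ can hide them. This is exactly what makes the embedding property the right tool and the pullback argument valid.
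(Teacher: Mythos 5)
Your argument is correct as a proof that there is no compact $A\subset\psi(\mathcal{T})$ with $\rho(\F_{\infty})\cdot A=\psi(\mathcal{T})$, and it takes a genuinely different route from the paper. The paper argues directly inside the would-be compact set: given any $K\subset\H^{\infty}$ with $\psi(\mathcal{T})\subset\rho(\F_{\infty})\cdot K$, it chooses orbit representatives $m_{i}'$ of the edge midpoints around a fixed vertex with $\psi(m_{i}')\in K$; since distinct midpoints lie in distinct $\F_{\infty}$-orbits at quotient distance $1$, one has $d_{\mathcal{T}}(m_{i}',m_{j}')\geqslant 1$, and the distance formula of Theorem \ref{thm:BIM} turns $(\psi(m_{i}'))_{i}$ into a uniformly separated sequence in $K$, so $K$ cannot be compact. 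You instead transfer non-cocompactness from Proposition \ref{prop:cobounded action on tree} through the topological embedding $\psi$; the pullback $A\rightsquigarrow\psi^{-1}(A)$, the equivariance computation, and your observation that the quasi-isometric estimate alone would not suffice (bounded sets in $\mathcal{T}$ are far from compact) are all sound.

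The difference in strength between the two conclusions is not cosmetic, and this is where your proposal has a gap relative to the paper. The paper's proof rules out \emph{any} compact $K\subset\H^{\infty}$, not necessarily contained in $\psi(\mathcal{T})$, whose translates merely cover $\psi(\mathcal{T})$. Your pullback argument cannot handle such a $K$: one must first replace $K$ by $K\cap\psi(\mathcal{T})$, and that intersection is compact only if $\psi(\mathcal{T})$ is closed in $\H^{\infty}$ --- a point that is not obvious in this non-proper, infinite-valency setting and that you do not address (it requires, e.g., a lower bound on distances between points on distinct edge-images, which is exactly the kind of estimate the paper's separated-sequence argument supplies for free). This stronger ``extrinsic'' form is precisely what is re-used in the corollary that follows: there the compact candidate $K$ satisfies $\mathcal{C}_{0}\subset\rho(\F_{\infty})\cdot K$, where $\mathcal{C}_{0}$ is the closed convex hull of $\psi(\mathcal{T})$ and is much larger than $\psi(\mathcal{T})$, and the contradiction is obtained by producing the separated points $\psi(m_{i}')\in K$ by the same construction. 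So your proof establishes the proposition under the intrinsic reading of cocompactness, but to make it carry the same weight in the paper you would need either to prove additionally that $\psi(\mathcal{T})$ is closed in $\H^{\infty}$, or to rework the corollary along the paper's original lines.
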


\begin{proof}
The proof is similar to that of Proposition \ref{prop:cobounded action on tree}. Suppose that $\psi(\mathcal{T}) \subset \rho(\F_{\infty})\cdot K$ for $K \subset \H^{\infty}$. Let $v$ be a vertex in $\mathcal{T}$ and enumerate the edges incident to $v$. For all $i \in \N$, denote by $m_{i}$ the point on the $i$--th edge around $v$ such that $d_{\mathcal{T}}(v,m_{i}) = \dfrac{1}{2}$. Then for all $i\in \N$, there exists $m_{i}'$ in the orbit $\F_{\infty}\cdot m_{i}$ such that $\psi(m_{i}') \in K$. For $i\neq j$, we have $d_{\mathcal{T}}(m_{i}',m_{j}') \geqslant 1$ so by the second point of Theorem \ref{thm:BIM}, 
$$d(\psi(m_{i}'),\psi(m_{j}')) = \cosh^{-1}\left(\lambda^{d_{\mathcal{T}}(m_{i}',m_{j}')}\right) \geqslant \cosh^{-1}(\lambda) > 0.$$
The sequence $\left(\psi(m_{i}')\right)_{i\in\N} \in K^{\N}$ has no convergent subsequence, therefore $K$ cannot be compact.
\end{proof}

\begin{corollary}
The group $\rho(\F_{\infty}) < \Isom(\H^{\infty})$ is not convex-cocompact.
\end{corollary}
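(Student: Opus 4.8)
The plan is to combine Lemma \ref{lem:cocompact on minimal convex set} with the non-cocompactness already established for the tree image in Proposition \ref{prop:non-cocompact action on tree}. By the remark following Lemma \ref{lem:cocompact on minimal convex set}, it suffices to prove that $\rho(\F_{\infty})$ does not act cocompactly on the minimal invariant closed convex set $\mathcal{C}_{0}$: indeed, convex-cocompactness would yield a cocompact action on \emph{some} invariant closed convex set, and the lemma would then force cocompactness on $\mathcal{C}_{0}$, so it is enough to rule the latter out.

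First I would argue by contradiction, supposing that $\mathcal{C}_{0} = \rho(\F_{\infty})\cdot K$ for some compact set $K \subset \H^{\infty}$. The key observation is that the separated family of midpoints used in Proposition \ref{prop:non-cocompact action on tree} already lives inside $\mathcal{C}_{0}$: since $\psi$ is an $\Aut(\mathcal{T})$-equivariant embedding, $\psi(\mathcal{T}) \subseteq \mathcal{C}_{0}$, so each midpoint $\psi(m_{i})$ belongs to $\mathcal{C}_{0}$ and hence to $\rho(\F_{\infty})\cdot K$. Transporting back into $K$ by equivariance, I obtain points $m_{i}' = g_{i}^{-1} m_{i}$ in the orbit $\F_{\infty}\cdot m_{i}$ with $\psi(m_{i}') \in K$.

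Finally I would reuse the metric estimate from the proof of Proposition \ref{prop:non-cocompact action on tree}: for $i \neq j$ the points $m_{i}'$ and $m_{j}'$ are midpoints of edges carrying distinct labels, hence $d_{\mathcal{T}}(m_{i}',m_{j}') \geqslant 1$, and the second point of Theorem \ref{thm:BIM} gives $d(\psi(m_{i}'),\psi(m_{j}')) \geqslant \cosh^{-1}(\lambda) > 0$. Thus $(\psi(m_{i}'))_{i\in\N}$ is an infinite, uniformly separated sequence contained in the compact set $K$, which has no convergent subsequence — a contradiction.

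The only genuinely delicate point is the reduction itself: Proposition \ref{prop:non-cocompact action on tree} is phrased for the (non-convex, non-closed) set $\psi(\mathcal{T})$, whereas Lemma \ref{lem:cocompact on minimal convex set} demands a statement about the convex set $\mathcal{C}_{0}$. The whole argument hinges on the inclusion $\psi(\mathcal{T}) \subseteq \mathcal{C}_{0}$, which guarantees that the witnesses to non-cocompactness of $\psi(\mathcal{T})$ are simultaneously witnesses to non-cocompactness of $\mathcal{C}_{0}$; everything else is a verbatim repetition of the proof of Proposition \ref{prop:non-cocompact action on tree}.
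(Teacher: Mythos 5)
Your proposal is correct and follows exactly the paper's own argument: reduce via Lemma \ref{lem:cocompact on minimal convex set} to non-cocompactness of the action on $\mathcal{C}_{0}$, then reuse the uniformly separated midpoint sequence from Proposition \ref{prop:non-cocompact action on tree} together with the inclusion $\psi(\mathcal{T})\subset\mathcal{C}_{0}$ to contradict compactness of $K$. The only difference is that you spell out the transport-into-$K$ and separation steps in more detail than the paper, which simply cites Proposition \ref{prop:non-cocompact action on tree}.
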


\begin{proof}
By Lemma \ref{lem:cocompact on minimal convex set}, it suffices to prove that $\rho(\F_{\infty})$ does not act cocompactly on $\mathcal{C}_{0}$. If $\mathcal{C}_{0} \subset \rho(\F_{\infty})\cdot K$ for some $K \subset \H^{\infty}$, then we may construct a sequence of points $m_{i}' \in \mathcal{T}$ such that $\psi(m_{i}') \in K$ and all the $d(\psi(m_{i}'),\psi(m_{j}'))$ are greater than a uniform positive constant for $i\neq j$, as in Proposition \ref{prop:non-cocompact action on tree}. Since $\psi(\mathcal{T}) \subset \mathcal{C}_{0}$, all the $\psi(m_{i}')$ also belong to $\mathcal{C}_{0}$.
\end{proof}

\section{Embeddings of \texorpdfstring{$\Isom(\H^{2})$}{Isom(H^2)} into \texorpdfstring{$\Isom(\H^{\infty})$}{Isom(H^infty)}}

\begin{definition}
Let $\rho:\Gamma \to \Isom(\H^{\infty})$ be a representation of a group $\Gamma$. We say that $\rho$ is \emph{irreducible} if $\rho(\Gamma)$ has no fixed point in the boundary $\partial \H^{\infty}$ and does not preserve any non-trivial closed totally geodesic subspace of $\H^{\infty}$.
\end{definition}

\begin{remark}
This definition coincides with the usual definition of irreducibility for the linear representation of $\Gamma$ on the underlying Hilbert space $\mathcal{H}$: $\rho$ is irreducible if and only if there are no non-trivial invariant closed linear subspaces in $\mathcal{H}$. This was also referred to as \emph{geometric Zariski density} in \cite[Section 5.B]{MP}.
\end{remark}

From the hyperboloid model $\H^{\infty} = \{x \in \mathcal{H} \mid Q(x) = -1, x_{0} > 0\}$, we can obtain an embedding of $\H^{n}$ into $\H^{\infty}$ by restricting to vectors $x = (x_{i})_{i\in\N} \in \mathcal{H}$ where all the coordinates vanish for $i > n+1$. This also induces an injection of the isometry group $\Isom(\H^{n}) \hookrightarrow \Isom(\H^{\infty})$. Monod and Py classified all the continuous representations from $\Isom(\H^{n})$ to $\Isom(\H^{\infty})$ in \cite{MP} that are irreducible.

\begin{theorem}[\cite{MP}, Theorem $B$] \label{thm:exotic deformation B}
\begin{enumerate}
    \item Let $\rho:\Isom(\H^{n}) \to \Isom(\H^{\infty})$ be a continuous non-elementary action. There exists $t\in (0,1]$ such that $\ell_{\H^{\infty}}(\rho(g)) = t\ell_{\H^{n}}(g)$ for all $g\in \Isom(\H^{n})$, where $\ell_{\H^{\infty}}$ and $\ell_{\H^{n}}$ denote the translation lengths in $\H^{\infty}$ and $ \H^{n}$ respectively. Moreover, $t=1$ if and only if $\rho$ preserves an $n$-dimensional totally geodesic subspace of $\H^{\infty}$.
    \item For each $t\in (0,1)$, there is, up to conjugacy in $\Isom(\H^{\infty})$, exactly one irreducible continuous representation $\rho_{t}:\Isom(\H^{n}) \to \Isom(\H^{\infty})$ such that for all $g\in\Isom(\H^{n})$, $\ell_{\H^{\infty}}(\rho_{t}(g)) = t\ell_{\H^{n}}(g)$.
\end{enumerate}
\end{theorem}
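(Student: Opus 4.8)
The plan is to translate this statement about isometric actions on $\H^{\infty}$ into a question about invariant kernels on $\H^{n}$, and then to settle that question using the harmonic analysis of the rank-one group $G = \Isom(\H^{n})$. First I would produce an equivariant map. Since the maximal compact subgroup $K < G$ is compact and $\rho$ is continuous, every $K$-orbit in $\H^{\infty}$ is the image of a compact set, hence bounded, so the centre-of-mass construction in the complete $\CAT(-1)$ (in particular $\CAT(0)$) space $\H^{\infty}$ provides a point $o$ fixed by $\rho(K)$. The orbit map $g \mapsto \rho(g)(o)$ then factors through $G/K = \H^{n}$, with base point $o_{n}$ fixed by $K$, and yields a $\rho$-equivariant map $\Phi : \H^{n} \to \H^{\infty}$. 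Because $\H^{n}$ is two-point homogeneous and $\Phi$ is equivariant, the kernel $\cosh d_{\H^{\infty}}(\Phi(x),\Phi(y))$ depends only on $d_{\H^{n}}(x,y)$; write it $F(d_{\H^{n}}(x,y))$ with $F \geqslant 1$ and $F(0)=1$. The whole problem now reduces to determining which functions $F$ can occur.

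Next I would record the realizability constraint. Writing $B$ for the bilinear form of signature $(\infty,1)$, the identity $B(\Phi(x),\Phi(y)) = -F(d(x,y))$ forces, after projecting the $\Phi(x)$ orthogonally to the timelike vector $\Phi(o_{n})$, that the kernel $(x,y) \mapsto F(d(x,o_{n}))F(d(y,o_{n})) - F(d(x,y))$ be positive semidefinite; this is exactly the requirement that the ambient form carry a single negative direction. Conversely, a GNS-type reconstruction shows that this positivity, together with $F \geqslant 1$, suffices to rebuild an equivariant $\Phi$, and that two embeddings sharing the same $F$ differ by an isometry of $\H^{\infty}$ conjugating the two actions. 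This already reduces the uniqueness assertion to uniqueness of $F$, and reduces existence to exhibiting the admissible $F$.

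The heart of the argument, and the step I expect to be the main obstacle, is to show that the admissible radial kernels are exactly $F(r) = \cosh(tr)$ for $t \in (0,1]$. The self-representation of $G$ on the Lorentzian Hilbert space is not radial on the nose, and the positivity condition above amounts to asking that a whole family of finite Gram matrices have at most one negative eigenvalue. Solving the three-point Gram determinant equation only pins $F$ down in the totally geodesic case (where that determinant vanishes and the addition law $F(a+b) = F(a)F(b) + \sqrt{(F(a)^2-1)(F(b)^2-1)}$ forces $\cosh(tr)$), so the general classification genuinely requires decomposing the reconstructed orthogonal representation through the spherical Plancherel theorem on $\H^{n}$ and checking which spectral parameters keep the form of signature $(\infty,1)$. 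This is where $n \geqslant 2$ and non-elementarity enter, and it is precisely the complementary series of $\Isom(\H^{n})$, parametrised by $t \in (0,1]$, that survives; verifying the positivity of $\cosh(tr)$ for $t \leqslant 1$ and its failure for $t > 1$ is the key analytic input.

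Finally I would read off the remaining assertions from $F = \cosh(t\,\cdot\,)$. Taking a base point $o$ on the axis of a hyperbolic $g$ gives $d_{\H^{\infty}}(o,\rho(g^{k})(o)) = \cosh^{-1}\cosh(t\,d_{\H^{n}}(o,g^{k}o)) = t\,d_{\H^{n}}(o,g^{k}o)$, so dividing by $k$ and letting $k \to \infty$ yields $\ell_{\H^{\infty}}(\rho(g)) = t\,\ell_{\H^{n}}(g)$; the elliptic and parabolic cases are immediate since both lengths vanish. The positive semidefinite kernel has finite rank exactly when $t=1$, in which case $\Phi$ is the totally geodesic embedding onto an $n$-dimensional copy of $\H^{n}$, whereas for $t \in (0,1)$ it has infinite rank, forcing the image to span $\H^{\infty}$ and hence the action to be irreducible. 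Combined with the uniqueness furnished by the GNS reconstruction, this establishes both parts of the theorem.
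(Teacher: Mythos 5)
First, a remark on the ground truth: the paper does not prove this statement at all --- it is quoted verbatim from Monod--Py (\cite{MP}, Theorem B), so your proposal has to be measured against \emph{their} proof. Your opening moves do coincide with theirs: the Cartan fixed-point argument giving a $\rho(K)$-fixed point, the resulting equivariant map $\Phi:\H^{n}=G/K\to\H^{\infty}$, the radial kernel $F(d(x,y))=\cosh d_{\H^{\infty}}(\Phi(x),\Phi(y))$, and the GNS-type equivalence between realizability of a kernel and positive semidefiniteness of $(x,y)\mapsto F(d(x,o))F(d(y,o))-F(d(x,y))$ are exactly the ``kernels of hyperbolic type'' formalism on which \cite{MP} is built.

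The genuine problem is that the heart of your argument is not just unproved but false. You claim the admissible radial kernels are exactly $F(r)=\cosh(tr)$, $t\in(0,1]$; in fact $\cosh(tr)$ is \emph{not} realizable on $\H^{n}$ ($n\geqslant 2$) for any $t\in(0,1)$. Indeed, an equivariant map with $\cosh d(\Phi(x),\Phi(y))=\cosh(t\,d(x,y))$ would satisfy $d(\Phi(x),\Phi(y))=t\,d(x,y)$ exactly, so its image would be closed under midpoints, hence a closed, convex, geodesically complete subset of $\H^{\infty}$, hence a totally geodesic subspace --- isometric to some $\H^{k}$, of curvature $-1$ --- while simultaneously isometric to $\H^{n}$ with metric rescaled by $t$, of curvature $-1/t^{2}$: impossible for $t<1$. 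One can also see the failure of your positivity condition by hand: for $t=\tfrac12$, take three points of $\H^{2}$ at distance $2$ from the base point $o$ with pairwise angles $2\pi/3$, so that their mutual distances $d$ satisfy $\cosh d=\cosh^{2}(2)+\tfrac12\sinh^{2}(2)\approx 20.73$; the $3\times 3$ matrix $\bigl[F(d(x_{i},o))F(d(x_{j},o))-F(d(x_{i},x_{j}))\bigr]$ then has $(1,1,1)$-eigenvalue $3\cosh^{2}(1)-1-2\sqrt{(1+\cosh d)/2}\approx -0.45<0$. This is precisely why Theorem \ref{thm:exotic deformation C} (quoted in the paper from \cite{MP}) only asserts that the equivariant harmonic map $f_{t}$ is a similarity \emph{up to a bounded additive error} $D$: that error cannot be removed. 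What is actually true, and what the Monod--Py proof establishes, is that the canonical kernel of an irreducible action (based at the $K$-fixed point, whose uniqueness itself requires a Gelfand-pair-type argument transported to the Lorentzian setting) is a Harish--Chandra spherical function with real parameter in the first interval \emph{beyond} the unitary range --- where the invariant form on the spherical representation has exactly one negative square --- and this function merely grows like $e^{tr}$; the identity $\ell_{\H^{\infty}}(\rho(g))=t\,\ell_{\H^{n}}(g)$ comes from this exponential growth rate, not from an exact scaling identity. (Relatedly, your appeal to the ``complementary series'' is off by one interval: complementary series representations are unitary, with positive definite invariant form, and cannot produce the signature $(\infty,1)$.) Since your deductions of the translation-length formula, the $t=1$ criterion, irreducibility, and uniqueness are all read off from the false identity $F=\cosh(t\,\cdot)$, the proposal does not establish the theorem; the spectral step you defer as ``the key analytic input'' is where the entire proof lives, and carrying it out yields a different answer from the one you assumed.
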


\begin{theorem}[\cite{MP}, Theorem C]\label{thm:exotic deformation C}
There exists a smooth harmonic $\rho_{t}$-equivariant map $f_{t}:\H^{n}\to\H^{\infty}$ which is asymptotically an isometric embedding after rescaling, \ie there is $D \geqslant 0$ such that for all $x,y \in \H^{n}$,
$$\left|d_{\H^{\infty}}(f_{t}(x),f_{t}(y)) - td_{\H^{n}}(x,y)\right| \leqslant D.$$
Moreover, $f_{t}$ extends to a continuous map $f_{t}:\overline{\H^{n}}\to\overline{\H^{\infty}}$.
\end{theorem}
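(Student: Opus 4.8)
The plan is to exploit the equivariance of $f_t$ to reduce the entire statement to a single one-variable function, and then to read off harmonicity and the metric estimate separately. Write $G = \Isom(\H^n)$, fix a base point $o\in\H^n$, and let $K = \Stab(o)$, a maximal compact subgroup, so that $\H^n \cong G/K$. A $G$-equivariant map $f_t:\H^n\to\H^\infty$ is determined by the single point $p := f_t(o)$, which must be fixed by $\rho_t(K)$, via the rule $f_t(g\cdot o) = \rho_t(g)\cdot p$. Because $\H^n$ is two-point homogeneous, the quantity $d_{\H^\infty}(f_t(x),f_t(y))$ depends only on $r = d_{\H^n}(x,y)$; denote this common value by $\Phi(r)$. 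The theorem then reduces to (i) choosing $p$ so that $f_t$ is harmonic and (ii) establishing $|\Phi(r)-tr|\le D$, followed by the boundary extension.

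For the construction I would first produce the fixed point $p$: since $K$ is compact and $\H^\infty$ is complete $\CAT(-1)$ (in particular $\CAT(0)$), the circumcenter of any $\rho_t(K)$-orbit is a well-defined point, and the fixed-point set $\Fix(\rho_t(K))$ is a closed convex subset. Setting $f_t(g\cdot o):=\rho_t(g)p$ then gives a smooth $G$-equivariant map for any such $p$. To obtain harmonicity I would use that the tension field of an equivariant map is again equivariant, so $\tau(f_t)$ is determined by $\tau(f_t)(o)$, which by invariance under the isotropy $\rho_t(K)$ lies in the tangent space to $\Fix(\rho_t(K))$ at $p$. The first-variation formula identifies $\tau(f_t)(o)$, up to sign, with the gradient along $\Fix(\rho_t(K))$ of the $K$-invariant energy density $p\mapsto e(f_p)(o)$; hence choosing $p$ to minimise this density over the convex set $\Fix(\rho_t(K))$ forces $\tau(f_t)(o)=0$, and equivariance propagates harmonicity to all of $\H^n$. (If $\rho_t(K)$ happens to fix a unique point, harmonicity is automatic, since the relevant fixed tangent space is then trivial.)

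For the asymptotic estimate I would compute $\Phi$ along a single geodesic. Fix a geodesic $\alpha_0$ through $o$ and let $(g_r)_{r\in\R}$ be the one-parameter group of transvections along $\alpha_0$, so that $d_{\H^n}(o,g_r o)=|r|$ and $\ell_{\H^n}(g_r)=|r|$. Then $(\rho_t(g_r))_r$ is a one-parameter group of hyperbolic isometries of $\H^\infty$ sharing a common axis $\alpha$, and by Theorem \ref{thm:exotic deformation B} its translation length is $\ell_{\H^\infty}(\rho_t(g_r)) = t|r|$. Writing $a = d_{\H^\infty}(p,\alpha)$, a constant, the point $p$ together with $\alpha$ spans a totally geodesic copy of $\H^2$ that is invariant under $\rho_t(g_r)$, so the elementary displacement formula gives $\cosh\Phi(r) = \cosh^2(a)\cosh(tr) - \sinh^2(a)$, whence $\Phi(r) = tr + O(1)$ with correction bounded uniformly in $r$. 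By two-point homogeneity, $\Phi(r)$ is the value of $d_{\H^\infty}(f_t(x),f_t(y))$ for every pair with $d_{\H^n}(x,y)=r$, so $|\Phi(r)-tr|\le D$ for a uniform $D\ge 0$, which is precisely the claimed estimate.

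The continuous boundary extension is then essentially formal: the estimate makes $f_t$ a $(1/t,D/t)$-quasi-isometric embedding between the $\delta$-hyperbolic spaces $\H^n$ and $\H^\infty$, so it sends geodesic rays to quasi-geodesics, which by stability of quasi-geodesics stay within bounded distance of genuine rays, producing a continuous map $\partial\H^n\to\partial\H^\infty$ and hence $\overline{\H^n}\to\overline{\H^\infty}$ (compare Proposition \ref{prop:extension_to_boundary}). The main obstacle I anticipate lies in the harmonicity step: producing the energy-minimising equivariant map and showing it is genuinely the smooth harmonic map requires controlling the $K$-invariant energy density on the convex set $\Fix(\rho_t(K))$, namely existence and regularity of the minimiser, which is the only truly analytic input. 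The construction of $p$ and the displacement estimate are, by contrast, soft consequences of $\CAT(-1)$ geometry and the translation-length scaling of Theorem \ref{thm:exotic deformation B}.
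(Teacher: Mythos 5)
First, a point of order: the paper does not prove this statement at all --- it is Theorem C of Monod--Py, quoted as background --- so the only comparison available is with the proof in [MP]. That proof is by explicit construction: $f_{t}$ is written down concretely as a map into $\L^{2}(\partial\H^{n})$, namely $f_{t}(g(o)) = |\Jac(g^{-1})(\cdot)|^{1+\frac{t}{n-1}}$ (a formula reproduced in the present paper inside the proof of Lemma \ref{lem:some properties}), and harmonicity, the estimate, and the boundary extension are verified by direct computation with Poisson kernels and eigenfunctions of the hyperbolic Laplacian. Your proposal is a genuinely different, abstract variational route, and parts of it are sound: equivariant maps do correspond to points of $\Fix(\rho_{t}(K))$, the tension field of an equivariant map is equivariant and tangent to $\Fix(\rho_{t}(K))$, and criticality of the energy density on $\Fix(\rho_{t}(K))$ is indeed equivalent to harmonicity (via antisymmetry of $\nabla X$ for Killing fields $X$, which also kills the boundary terms you would otherwise face on a non-amenable domain). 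However, the central claim that $f_{t}(g\cdot o):=\rho_{t}(g)p$ ``gives a smooth $G$-equivariant map for any such $p$'' is false: $\rho_{t}$ is only a continuous representation, the orbit map $g\mapsto\rho_{t}(g)p$ is smooth only when $p$ is a smooth vector, and the Killing vector fields generating $\rho_{t}$ are densely defined unbounded operators. Hence the energy density is not even defined on all of $\Fix(\rho_{t}(K))$, and its natural domain is not closed --- which is exactly what breaks a soft ``minimise over a closed convex set'' argument. Existence and regularity of the minimiser, which you defer as ``the only truly analytic input'', is therefore not a technical remainder: it is the theorem.

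The metric estimate has two further gaps. First, your displacement formula assumes that $p$ and the axis $\alpha$ span a $\rho_{t}(g_{r})$-invariant totally geodesic copy of $\H^{2}$. This fails in infinite dimension: a loxodromic isometry of $\H^{\infty}$ carries, besides its translation, a rotational part $T\in\O(E)$ on the orthogonal complement of its axis (visible in the matrix form used in Proposition \ref{prop:infinite dimensional centraliser for loxodromic isometry}), and it preserves no such plane in general. The correct identity is $\cosh\Phi(r) = \cosh^{2}(a)\cosh(tr) - \sinh^{2}(a)\,B(v,T_{r}v)$ with $|B(v,T_{r}v)|\leqslant 1$, so the conclusion $\Phi(r)=tr+O(1)$ survives after correction --- but only granted that the axis exists at all. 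That is the deeper problem: Theorem \ref{thm:exotic deformation B}, which is all you invoke, gives only the \emph{value} of the translation length $\ell_{\H^{\infty}}(\rho_{t}(g_{r}))=t|r|$, defined as an infimum of displacement; in the non-proper space $\H^{\infty}$ an isometry with positive translation length need not attain it, i.e.\ need not be axial. Subadditivity of $\Phi$ together with $\lim_{r\to\infty}\Phi(r)/r=t$ yields only the lower bound $\Phi(r)\geqslant tr$, not $\Phi(r)\leqslant tr+D$ (compare $\Phi(r)=tr+\sqrt{r}$, which is subadditive with the right slope). In [MP] the axial, type-preserving nature of the image isometries is itself a consequence of the explicit construction of $f_{t}$ --- that is, of the very theorem you are trying to prove --- so importing it here would be circular.
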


Notice that being asymptotically an isometric embedding after rescaling implies that $f_{t}$ is a quasi-isometric embedding from $\H^{n}$ to $\H^{\infty}$.

For $t\in (0,1)$, since $\rho_{t}$ is non-elementary, there is a unique minimal non-empty closed convex $\rho_{t}$-invariant subset $\mathcal{C}_{t} \subset \H^{\infty}$ (see \cite{monod}).

\begin{theorem}[\cite{MP}, Theorem D and Proposition E]\label{thm:exotic deformation DE}
For any $t\in (0,1]$, the $\CAT(-1)$ space $\mathcal{C}_{t}$ is proper, the action of $\Isom(\H^{n})$ on $\mathcal{C}_{t}$ is cocompact and $\rho_{t}$ induces an isomorphism $\Isom(\H^{n}) \simeq \Isom(\mathcal{C}_{t})$.
\end{theorem}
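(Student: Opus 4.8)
The plan is to dispose of the boundary case $t=1$ at once — there $\rho_1$ preserves an $n$-dimensional totally geodesic copy of $\H^n$ by Theorem \ref{thm:exotic deformation B}, on which $\Isom(\H^n)$ acts in the standard way, so all three assertions are classical — and to concentrate on $t\in(0,1)$. The main input is the $\rho_t$-equivariant harmonic map $f_t\colon\H^n\to\H^\infty$ of Theorem \ref{thm:exotic deformation C}, which is a quasi-isometric embedding and is asymptotically isometric after rescaling by $t$. Since $\H^\infty$ is $\CAT(-1)$, the image $f_t(\H^n)$ is quasi-convex, and any non-empty closed convex $\rho_t$-invariant set must contain the limit set $\Lambda=\partial f_t(\partial\H^n)$; by minimality I would identify $\mathcal{C}_t$ with the closed convex hull of $\Lambda$, which then lies within some bounded distance $M$ of $f_t(\H^n)$.

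For properness I would run the argument of Theorem \ref{thm:QI<=>convex-cocompact} essentially verbatim. The boundary map $\partial f_t$ is continuous on the compact sphere $\partial\H^n$, so $\Lambda$ is compact; in the Klein model $\mathcal{C}_t$ is the intersection of the open unit ball with the closed affine convex hull of $\Lambda$, and Mazur's theorem (Theorem \ref{thm:Mazur compactness theorem}) makes the latter compact. Hence $\mathcal{C}_t$ is locally compact and closed in $\H^\infty$, i.e.\ proper. Cocompactness is then formal: $\Isom(\H^n)$ acts transitively on $\H^n$, so by equivariance the orbit of a base point $f_t(x_0)$ is exactly $f_t(\H^n)$, which is $M$-dense in $\mathcal{C}_t$; consequently $A=\overline{B}(f_t(x_0),M)\cap\mathcal{C}_t$ is compact and satisfies $\rho_t(\Isom(\H^n))\cdot A=\mathcal{C}_t$.

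Injectivity of $\rho_t\colon\Isom(\H^n)\to\Isom(\mathcal{C}_t)$ is also soft: if $\rho_t(g)$ fixes $\mathcal{C}_t$ pointwise, then applying the nearest-point projection onto $\mathcal{C}_t$ to the relation $\rho_t(g)f_t(x)=f_t(gx)$ and using that $f_t(\H^n)$ stays within $M$ of $\mathcal{C}_t$ forces $d_{\H^\infty}(f_t(gx),f_t(x))\le 2M$, whence the rescaling estimate of Theorem \ref{thm:exotic deformation C} gives $d_{\H^n}(gx,x)\le(2M+D)/t$ for every $x$; an isometry of $\H^n$ with globally bounded displacement is the identity.

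The real content, and the step I expect to be hardest, is surjectivity. Every $\phi\in\Isom(\mathcal{C}_t)$ extends to a homeomorphism of $\partial\mathcal{C}_t$, which $\partial f_t$ identifies with the round sphere $\partial\H^n$ (Proposition \ref{prop:extension_to_boundary}), and isometries of a $\CAT(-1)$ space act on the boundary preserving the visual cross-ratio. My proposed route is to observe that asymptotic isometry after rescaling by $t$ makes $\partial f_t$ a snowflake equivalence — it raises the visual cross-ratio to the power $t$ — so that, because $x\mapsto x^t$ is monotone, a self-homeomorphism of $\partial\H^n$ preserves the snowflaked cross-ratio if and only if it preserves the usual one. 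Thus the boundary map induced by $\phi$ transports to a genuine Möbius transformation of $\partial\H^n$, hence equals the boundary action of a unique $g\in\Isom(\H^n)$; since two isometries of a $\CAT(-1)$ space that agree on the whole boundary coincide, $\phi=\rho_t(g)$. The delicate points I would have to nail down are the exact sense in which $\partial f_t$ transforms the cross-ratio — which demands boundary regularity of the harmonic map well beyond mere continuity — together with the fact that isometries of $\mathcal{C}_t$ are both determined by and act Möbius-ly on $\partial\mathcal{C}_t$. This boundary rigidity is precisely where the special geometry of $f_t$, rather than the abstract $\CAT(-1)$ and cocompactness data alone, must enter.
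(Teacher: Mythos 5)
First, a point of comparison: the paper never proves this statement --- it is imported wholesale from \cite{MP} (Theorem D and Proposition E there), so your proposal has to be judged on its own merits rather than against an argument in the present text. For the first two assertions (properness and cocompactness) your outline is essentially viable, and it is the same mechanism the paper itself deploys in Theorem \ref{thm:QI<=>convex-cocompact}: compactness of $\Lambda = \partial f_{t}(\partial\H^{n})$, Mazur's theorem (Theorem \ref{thm:Mazur compactness theorem}) in the Klein model to get local compactness, then coboundedness of the orbit. One caveat: you assert that the closed convex hull of $\Lambda$ lies within bounded distance of $f_{t}(\H^{n})$ as a consequence of quasi-convexity alone. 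In a non-proper space this is precisely the non-trivial point --- convex hulls and quasi-convex hulls are a priori far apart (compare the paper's own warning in Section 3.3 that the hull of the image and the hull of the limit set differ in general) --- and it is exactly what the Gromov-product/Morse argument in the proof of Theorem \ref{thm:QI<=>convex-cocompact} exists to establish. This half is therefore fixable by actually running that argument, with transitivity of $\Isom(\H^{n})$ on $\H^{n}$ standing in for strong discreteness, but not by the one-line appeal to quasi-convexity.

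The genuine gap is in surjectivity of $\Isom(\H^{n}) \to \Isom(\mathcal{C}_{t})$, which you rightly call the heart of the matter but do not close. The estimate of Theorem \ref{thm:exotic deformation C}, $\left|d_{\H^{\infty}}(f_{t}(x),f_{t}(y)) - t\,d_{\H^{n}}(x,y)\right| \leqslant D$, does \emph{not} make $\partial f_{t}$ an exact snowflake map on cross-ratios: an additive error of size $D$ on distances produces a multiplicative error of size $e^{O(D)}$ on Bourdon's cross-ratio, and these errors do not cancel. Consequently, transporting the boundary action of $\phi \in \Isom(\mathcal{C}_{t})$ through $\partial f_{t}$ yields a self-homeomorphism of $\partial\H^{n}$ that preserves the cross-ratio only up to a uniform multiplicative constant, i.e.\ a quasi-M\"obius map. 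The monotonicity of $x \mapsto x^{t}$ is of no help against a multiplicative fudge factor, and quasi-M\"obius self-homeomorphisms of the $(n-1)$-sphere form a vastly larger group than the M\"obius group, so the conclusion $\phi = \rho_{t}(g)$ does not follow. Closing this would require either heavy rigidity theorems (Tukia--Sullivan-type results on uniformly quasiconformal groups, with dimension-dependent hypotheses), or a different mechanism altogether: for instance, deducing \emph{exact} scaling of cross-ratios from the exact length-spectrum scaling $\ell_{\H^{\infty}}(\rho_{t}(g)) = t\,\ell_{\H^{n}}(g)$ of Theorem \ref{thm:exotic deformation B}, using that for cocompact actions on $\CAT(-1)$ spaces the cross-ratio on quadruples of fixed points is computable from translation lengths. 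As written, the central step of the isomorphism statement is missing, and the coarse harmonic-map estimate you invoke cannot supply it.
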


Notice that if $\Gamma$ is a cocompact lattice of $\Isom(\H^{n})$ and if $\rho_{t}:\Isom(\H^{n})\to \Isom(\H^{\infty})$ is a representation for some $t\in (0,1]$ as in Theorem \ref{thm:exotic deformation B}, the restriction of $\rho_{t}$ to $\Gamma$ is then convex-cocompact. Indeed, since $\Isom(\H^{n}) \curvearrowright \mathcal{C}_{t}$ is cocompact, there is a compact set $K_{1}\subset\mathcal{C}_{t}$ such that $\mathcal{C}_{t} = \Isom(\H^{n})\cdot K_{1}$. Moreover, since $\Gamma$ is a cocompact lattice in $\Isom(\H^{n})$, we have $\Isom(\H^{n}) = \Gamma\cdot K_{2}$ for some compact subset $K_{2} \subset \Isom(\H^{n})$. Thus, $\mathcal{C}_{t} = \Isom(\H^{n})\cdot K_{1} = (\Gamma\cdot K_{2})\cdot K_{1} = \Gamma\cdot K$, where $K = K_{2}\cdot K_{1}$ is a compact subset of $\mathcal{C}_{t}$. Using Corollary \ref{cor:openness thm}, we can find a neighbourhood $U$ of $\rho_{t|\Gamma}$ in $\Hom(\Gamma, \Isom(\H^{\infty}))$ consisting only of convex-cocompact representations of $\Gamma$.

\subsection{Some properties of the exotic representations}

Fix $t \in (0,1]$ and let $\rho_{t} : \Isom(\H^{n}) \to \Isom(\H^{\infty})$ and $f_{t}:\H^{n}\to\H^{\infty}$ be a representation and its corresponding harmonic map given by Theorems \ref{thm:exotic deformation B} and \ref{thm:exotic deformation C} respectively.

\begin{lemma}[\cite{MP}, Lemma $4.1$]
    Let $\mathcal{C}_{t}$ be the closed convex hull of $f_{t}(\H^{n})$ inside $\H^{\infty}$. The action of $\Isom(\H^{n})$ on $\mathcal{C}_{t}$ is minimal, \ie $\mathcal{C}_{t}$ admits no nontrivial $\Isom(\H^{n})$-invariant closed convex subspace. Moreover, this is the unique minimal $\Isom(\H^{n})$-invariant closed convex subset of $\H^{\infty}$. 
\end{lemma}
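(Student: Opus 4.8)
The plan is to show that the closed convex hull $\mathcal{C}_{t}=\overline{\Conv}(f_{t}(\H^{n}))$ coincides with the closed convex hull of the limit set, which is the unique minimal invariant closed convex subset. First I would record that $f_{t}(\H^{n})$ is $\rho_{t}$-invariant: by equivariance $\rho_{t}(g)f_{t}(x)=f_{t}(gx)$, so $f_{t}(\H^{n})$ is $\Isom(\H^{n})$-invariant and hence so is its closed convex hull $\mathcal{C}_{t}$, giving a nonempty invariant closed convex set. Since $\rho_{t}$ is non-elementary, the cited result provides a unique minimal nonempty $\Isom(\H^{n})$-invariant closed convex subset $\mathcal{M}$, and this $\mathcal{M}$ is contained in every nonempty invariant closed convex set; in particular $\mathcal{M}\subseteq\mathcal{C}_{t}$. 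Writing $\Lambda=\partial f_{t}(\partial\H^{n})\subset\partial\H^{\infty}$ for the limit set (the image of the boundary sphere under the boundary extension of the quasi-isometric embedding $f_{t}$), non-elementarity gives $\Lambda\subseteq\partial\mathcal{D}$ for every nonempty invariant closed convex $\mathcal{D}$, whence $\overline{\Conv}(\Lambda)\subseteq\mathcal{D}$; thus $\overline{\Conv}(\Lambda)=\mathcal{M}$. The whole statement then reduces to the single inclusion $f_{t}(\H^{n})\subseteq\mathcal{M}=\overline{\Conv}(\Lambda)$: this yields $\mathcal{C}_{t}\subseteq\mathcal{M}\subseteq\mathcal{C}_{t}$, and any proper invariant closed convex $\mathcal{D}\subseteq\mathcal{C}_{t}$ would then satisfy $\mathcal{C}_{t}=\overline{\Conv}(\Lambda)\subseteq\mathcal{D}$, forcing $\mathcal{D}=\mathcal{C}_{t}$ and proving both minimality and uniqueness at once.

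For this reduction I would exploit the transitivity of $\Isom(\H^{n})$ on $\H^{n}$. The function $x\mapsto d(f_{t}(x),\mathcal{M})$ is $\Isom(\H^{n})$-invariant (by equivariance of $f_{t}$ and invariance of $\mathcal{M}$), so by transitivity it is a constant $c\geqslant 0$, and the goal becomes $c=0$. Using that $f_{t}$ sends each geodesic of $\H^{n}$ to a quasi-geodesic of $\H^{\infty}$ with endpoints in $\Lambda$, the stability of quasi-geodesics (\cite[Theorem III.1.7]{BH}) already shows that $f_{t}(\H^{n})$ lies in a bounded neighbourhood of $\overline{\Conv}(\Lambda)$, i.e. $c\leqslant\delta'$ for some constant. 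However, I do not expect $c=0$ to follow from such soft, purely coarse arguments: a hypercycle (an equidistant hypersurface to a geodesic) stays at constant positive distance from a convex set while sharing its endpoints at infinity, so equivariance, transitivity and coarse convexity alone cannot exclude $c>0$. Excluding it is exactly where the harmonicity of $f_{t}$, and not merely its quasi-isometry type, must enter.

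The hard part, then, is to prove $c=0$ by a variational argument. Because $\H^{\infty}$ is $\CAT(-1)$ and in particular nonpositively curved, the energy functional is convex along geodesic homotopies, so the harmonic map $f_{t}$ of Theorem \ref{thm:exotic deformation C} is energy-minimizing among $\rho_{t}$-equivariant maps. I would assume $c>0$ and push $f_{t}$ towards $\mathcal{M}$: let $\Phi_{\epsilon}:\H^{\infty}\to\H^{\infty}$ move each point a distance $\epsilon$ along the geodesic to its nearest-point projection on $\mathcal{M}$. Since $\mathcal{M}$ is $\rho_{t}(\Isom(\H^{n}))$-invariant, $\Phi_{\epsilon}$ commutes with $\rho_{t}$, so $\Phi_{\epsilon}\circ f_{t}$ is again $\rho_{t}$-equivariant. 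Nearest-point projection to a convex set is $1$-Lipschitz, and strict negative curvature together with the injectivity of $df_{t}$ (the orbit map is a quasi-isometric embedding, so $f_{t}$ has rank $n$) makes $\Phi_{\epsilon}$ strictly distance-decreasing in the directions transverse to the projection. This forces a strict drop of the energy density, hence $E(\Phi_{\epsilon}\circ f_{t})<E(f_{t})$ once $c>0$, contradicting the energy-minimality of $f_{t}$. Therefore $c=0$ and $f_{t}(\H^{n})\subseteq\mathcal{M}$.

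The main obstacle I anticipate is making this energy comparison rigorous in the infinite-dimensional, non-cocompact setting: the energy of an equivariant map is only defined after reduction by the symmetry (via the constant $\Isom(\H^{n})$-invariant energy density, or integration over a fundamental domain of a cocompact lattice in $\Isom(\H^{n})$), and I must verify that $\Phi_{\epsilon}\circ f_{t}$ stays in the same admissible class and that the transverse strict contraction survives this averaging. Once $c=0$ is secured, the minimality of $\mathcal{C}_{t}=\overline{\Conv}(\Lambda)$ and its uniqueness among invariant closed convex subsets both follow immediately from the containment $\overline{\Conv}(\Lambda)\subseteq\mathcal{D}$ established for every such $\mathcal{D}$.
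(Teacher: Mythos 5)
First, a point of order: the paper never proves this statement --- it is imported verbatim from \cite{MP} (Lemma 4.1) and used as a black box --- so your argument has to be judged on its own merits rather than against an internal proof. Your reduction is correct: $\mathcal{C}_{t}$ is invariant by equivariance of $f_{t}$; the unique minimal invariant closed convex set $\mathcal{M}$ coincides with $\overline{\Conv(\Lambda)}$ for $\Lambda = f_{t}(\partial\H^{n})$ and is contained in every nonempty invariant closed convex subset; hence the whole lemma is equivalent to the single inclusion $f_{t}(\H^{n})\subseteq\mathcal{M}$, i.e.\ to the vanishing of the constant $c = d(f_{t}(x),\mathcal{M})$ (constant by transitivity of $\Isom(\H^{n})$ on $\H^{n}$ and equivariance). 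Your hypercycle remark is exactly the right diagnosis: no coarse, quasi-isometric argument can force $c=0$, and the harmonicity of $f_{t}$ (Theorem \ref{thm:exotic deformation C}) must enter. That is indeed the crux.

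The gap is in the variational step, which as written is a plan rather than a proof, and one supporting claim in it is false: a quasi-isometric embedding need not have injective differential, so ``the orbit map is a quasi-isometric embedding, so $f_{t}$ has rank $n$'' is not a valid inference. (The conclusion is true, but because $\ker df_{t}(x)$ is invariant under the point stabiliser $\O(n)$, which acts irreducibly on $T_{x}\H^{n}$, and $f_{t}$ is nonconstant; in fact nonvanishing of $df_{t}$ suffices for your purposes.) More importantly, the strict energy decrease does not follow from ``projection is contracting'' alone: as your own hypercycle example shows, the nearest-point projection restricted to $\{d(\cdot,\mathcal{M})\geqslant c\}$ has Lipschitz constant tending to $1$ at large scales, so the estimate must be infinitesimal, and the key observation you omit is that constancy of $d(f_{t}(\cdot),\mathcal{M})$ forces $df_{t}$ to take values tangent to the level set $\{d(\cdot,\mathcal{M})=c\}$; on those directions a $\CAT(-1)$ quadrilateral comparison gives the pointwise bound $|d(\pi_{\mathcal{M}}\circ f_{t})|\leqslant\cosh(c)^{-1}|df_{t}|$, hence a strict drop of energy density. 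You also still need the equivariant energy framework (energy per fundamental domain of a cocompact lattice, vanishing of the first variation at $f_{t}$ along the geodesic homotopy towards $\pi_{\mathcal{M}}\circ f_{t}$) to convert ``harmonic'' into ``energy-minimizing''. All of this can be carried out, but a shorter route --- and one consistent with the technique the paper itself uses in Proposition \ref{prop:borel density result} --- avoids global energy altogether: since $\psi = d(\cdot,\mathcal{M})\circ f_{t}\equiv c$ and $f_{t}$ is harmonic, the chain rule gives $0=\Delta\psi=\sum_{i}\mathrm{Hess}(d_{\mathcal{M}})\bigl(df_{t}(e_{i}),df_{t}(e_{i})\bigr)$ (interpreted via support functions where $d_{\mathcal{M}}$ fails to be smooth); at distance $c>0$ from a convex set in $\H^{\infty}$ this Hessian is positive definite on the orthogonal complement of $\nabla d_{\mathcal{M}}$, which is exactly where $df_{t}$ takes its values, so $df_{t}=0$ everywhere, contradicting nonconstancy. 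Either way, the mechanism you propose is the right one; what is missing is the tangency observation and the analytic scaffolding that turns the sketch into a proof.
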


We point out in the next lemma some properties of $\rho_{t}$ and $f_{t}$ that can be deduced from the results in \cite{MP}. 

In the following, a map $f:X \to \R$, where $X$ is a complete geodesic metric space, will be called \emph{analytic along geodesics} if for any geodesic line $L$ (the image of an isometric embedding $\gamma:\R \to X$), the restriction $f|_{L} = f\circ \gamma : \R \to \R$ is an analytic function.

\begin{lemma}\label{lem:some properties}
Let $0<t\leqslant 1$,
\begin{enumerate}
    \item If $\Gamma < \Isom(\H^{n})$ is a cocompact lattice, then $\rho_{t}(\Gamma)$ is cocompact in $\rho_{t}(\Isom(\H^{n})) \simeq \Isom(\mathcal{C}_{t})$. 
    \item If $\Gamma < \Isom(\H^{n})$ has no fixed point in $\partial \H^{n}$, then $\rho_{t}(\Gamma)$ also has no fixed point in $\partial \mathcal{C}_{t} = f_{t}(\partial\H^{n})$.
    \item The harmonic map $f_{t}: \H^{n}\to\H^{\infty}$ is analytic and therefore, if $Y$ is a totally geodesic subspace of $\H^{\infty}$, the map $x\mapsto d(f_{t}(x),Y)$ is analytic along geodesics.
\end{enumerate}
\end{lemma}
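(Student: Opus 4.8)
The three assertions all follow from the results of Monod--Py recalled above, and the plan is to treat them in turn. For the first, I would simply invoke Theorem \ref{thm:exotic deformation DE}, which states that $\rho_t$ induces an isomorphism of topological groups $\rho_t : \Isom(\H^n) \xrightarrow{\sim} \Isom(\mathcal{C}_t)$. Such an isomorphism descends to a homeomorphism of quotients $\Isom(\H^n)/\Gamma \cong \rho_t(\Isom(\H^n))/\rho_t(\Gamma)$, so it carries a cocompact lattice to a cocompact lattice; hence $\rho_t(\Gamma)$ is cocompact in $\rho_t(\Isom(\H^n)) \simeq \Isom(\mathcal{C}_t)$. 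Alternatively one can argue at the level of the action exactly as in the paragraph following Theorem \ref{thm:exotic deformation DE}, combining cocompactness of $\Isom(\H^n) \curvearrowright \mathcal{C}_t$ with that of $\Gamma$ in $\Isom(\H^n)$.

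For the second assertion, the key is to promote $f_t$ to an equivariant boundary homeomorphism. Since $\Isom(\H^n)$ acts transitively on $\H^n$ and $f_t$ is $\rho_t$-equivariant, the image $f_t(\H^n)$ is exactly the $\rho_t(\Isom(\H^n))$-orbit of $f_t(x_0)$, which is coarsely dense in $\mathcal{C}_t$ because the action on $\mathcal{C}_t$ is cocompact (Theorem \ref{thm:exotic deformation DE}). As $f_t$ is a quasi-isometric embedding (Theorem \ref{thm:exotic deformation C}), it is therefore a quasi-isometry onto the $\CAT(-1)$ space $\mathcal{C}_t$, and Proposition \ref{prop:extension_to_boundary} yields an equivariant homeomorphism $\partial f_t : \partial\H^n \to \partial\mathcal{C}_t$, consistent with the stated identification $\partial\mathcal{C}_t = f_t(\partial\H^n)$. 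I would then argue by contradiction: a point $\eta \in \partial\mathcal{C}_t$ fixed by $\rho_t(\Gamma)$ pulls back through the injective equivariant map $\partial f_t$ to a point $\xi \in \partial\H^n$ fixed by every element of $\Gamma$, contradicting the hypothesis.

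The third assertion is where the real work lies. First I would record the analyticity of $f_t$: it is a harmonic map between real-analytic Riemannian manifolds, so---either from the explicit construction in \cite{MP} or from elliptic regularity for the harmonic map system, whose coefficients are analytic---it is real-analytic, i.e. locally given by a convergent $\mathcal{H}$-valued power series. Next I would express the distance to a totally geodesic subspace $Y = W \cap \H^\infty$, where $W \subset \mathcal{H}$ is a closed subspace on which $Q$ is Lorentzian, in terms of the quadratic form. Writing $\pi_W$ for the $B$-orthogonal projection onto $W$ and using that the nearest point of $Y$ to $p$ is $\pi_W p / \sqrt{-Q(\pi_W p)}$, one gets
\[
\cosh d(p,Y) = \sqrt{-Q(\pi_W p)} \qquad (p \in \H^\infty).
\]
Decomposing $p = \pi_W p + p^\perp$ with $Q(p) = -1$ and $Q(p^\perp) \geq 0$ (positivity of $Q$ on $W^\perp$) shows $Q(\pi_W p) = -1 - Q(p^\perp) \leq -1$, so the radicand stays $\geq 1$ and the right-hand side is an analytic function of $p \in \mathcal{H}$. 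Composing with $f_t$ and an arbitrary geodesic $\gamma : \R \to \H^n$ makes $s \mapsto \cosh d(f_t(\gamma(s)), Y)$ analytic; since $\mathrm{arccosh}$ is analytic on $(1,\infty)$, the function $s \mapsto d(f_t(\gamma(s)),Y)$ is analytic wherever $f_t(\gamma(s)) \notin Y$.

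The main obstacle I anticipate is entirely within this third part. On the analytic side, one must justify real-analyticity of a harmonic map with infinite-dimensional target, which requires either unwinding the explicit formula for $f_t$ in \cite{MP} or adapting elliptic regularity to this setting. On the geometric side, $d(f_t(\cdot),Y)$ genuinely fails to be analytic at points where $f_t$ meets $Y$, since there $\cosh d$ attains its minimum value $1$ and $\mathrm{arccosh}$ is singular; the clean analytic object is $\cosh d(f_t(\cdot),Y)$, equivalently the polynomial quantity $\sinh^2 d(f_t(\cdot),Y) = -Q(\pi_W f_t(\cdot)) - 1$, and it is the vanishing locus of this analytic function---either all of a given geodesic or a discrete subset of it---that the subsequent irreducibility argument is designed to exploit.
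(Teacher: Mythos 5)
Your proposal is correct and follows essentially the same route as the paper: point 1 via cocompactness of $\Isom(\H^{n}) \curvearrowright \mathcal{C}_{t}$ combined with cocompactness of the lattice, point 2 via the equivariant boundary map induced by the quasi-isometric embedding $f_{t}$ (Proposition \ref{prop:extension_to_boundary}), and point 3 via the explicit formula $\cosh d(x,Y) = \sqrt{-Q(\pi_{W}x)}$ for the distance to a totally geodesic subspace, composed with the analytic map $f_{t}$ along geodesics. Your caveat that the genuinely analytic object is $\cosh d(f_{t}(\cdot),Y)$ rather than $d(f_{t}(\cdot),Y)$ itself (which fails to be analytic where $f_{t}$ meets $Y$, since $\mathrm{arccosh}$ is singular at $1$) is actually more careful than the paper, whose function $\varphi$ is defined as $d(\cdot,Y)$ but is in fact computed to be $\cosh d(\cdot,Y)$; as you note, this refinement is harmless for the subsequent irreducibility argument.
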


\begin{proof}
The first point is a consequence of Theorem \ref{thm:exotic deformation DE} and the proof of Theorem \ref{thm:QI<=>convex-cocompact}, where the invariant convex set is $\mathcal{C}_{t}$.

The second point follows from the equivariance of $f_{t}$ and the fact that $f_{t}$ is a quasi-isometric embedding which induces an injective map (actually a homeomorphism) from $\partial \H^{n}$ to $f(\partial \H^{n})$ by Proposition \ref{prop:extension_to_boundary}.

For the last point, the analyticity of the harmonic map $f_{t}$ comes from its expression explicited in \cite[section 3.A]{MP}. For $g \in \Isom(\H^{n})$ and $b \in \partial \H^{n}$, $f_{t}$ can be identified with the map from $\H^{n}$ to $\L^{2}(\partial \H^{n},\R)$ defined by
$$f_{t}(g\cdot o)(b) = |\Jac(g^{-1})(b)|^{1+\frac{t}{n-1}} = \left(\dfrac{B_{n}(o,b)}{B_{n}(g(o),b)}\right)^{t+n-1},$$
where $B_{n}$ is the bilinear form on $\R^{n+1}$ associated to the quadratic form of signature $(n,1)$ defining $\H^{n}$ and $o \in \H^{n}$ is some base point.
Define $\varphi:\H^{\infty} \to \R$ by $\varphi(x) = d(x,Y)$. If $\pi_{Y}$ denotes the orthogonal projection onto $Y$, then $\varphi(x) = d(x,\pi_{Y}(x))$. Since $Y \subset \H^{\infty}$ is totally geodesic, it is a hyperbolic subspace of $\H^{\infty} \subset \mathcal{H}$ and we may choose a basis $(e_{i})_{i\in\N}$ of the Hilbert space $\mathcal{H}$ such that 
$$Y = \{x \in \H^{\infty} \mid \forall i\in I\quad x_{i} = 0\},$$
where $I$ is some subset of $\N_{\geqslant 1} = \{1,2,\dots\}$ and the $x_{i}$'s are the coordinates of $x$ in this basis. Then for $x \in \H^{\infty}$, $\pi_{Y}(x)= \dfrac{y}{\sqrt{-Q(y)}}$ with 
$y = x-\sum_{i\in I} B(x,e_{i})e_{i} = \sum_{i\notin I}x_{i}e_{i}$. We then get 
$$\cosh{d(x,Y)} = \cosh{d(x,\pi_{Y}(x))} = -B(x,\pi_{Y}(x)) = \left(x_{0}^{2}-\sum_{i\in \N_{\geqslant 1}\setminus I} x_{i}^{2}\right)^{\frac{1}{2}}.$$
Let $\hat{J}$ denote the bounded linear map on $\mathcal{H}$ such that $\hat{J}e_{0} = e_{0}$, $\hat{J}e_{i} = 0$ if $i \in I$ and $\hat{J}e_{i} = -e_{i}$ otherwise. Then $$\varphi(x) = \cosh^{-1}\left(\langle x,\hat{J}x\rangle\right)^{\frac{1}{2}} = \cosh^{-1}\left(\transp{x}\hat{J}x\right)^{\frac{1}{2}}.$$
For any hyperbolic geodesic $L \subset \H^{n}$, there exist $a,b \in \R^{n+1}$ such that $L$ is the image of $\gamma:u\in\R \mapsto \cosh(u)a+\sinh(u)b \in \H^{n}$. The restriction of $\varphi\circ f_{t}$ to $L$ gives
\begin{align*}
    (\varphi\circ f_{t})_{|L}(u) &= (\varphi\circ f_{t})(\gamma(u))\\
    &= \varphi(f_{t}(\gamma(u)))\\
    &= \cosh^{-1}\left(\transp{f_{t}(\gamma(u))}\hat{J}f_{t}(\gamma(u))\right)^{\frac{1}{2}}
\end{align*}
which is an analytic function in $u \in \R$.
\end{proof}

Caprace and Monod proved the following theorem which can be thought of as an analog of the Borel-density theorem for lattices acting a proper $\CAT(0)$ space.

\begin{theorem}[\cite{Caprace-Monod:subgroups}, Theorem 2.4]\label{thm:caprace-monod}
Let $G$ be a locally compact group with a continuous isometric action on a proper $\CAT(0)$ space $X$ without Euclidean factor. If $G$ acts minimally on $X$ and without a
global fixed point in $\partial X$, then any closed subgroup with finite invariant covolume in $G$ still has these properties.
\end{theorem}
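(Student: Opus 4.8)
The plan is to establish the two conclusions separately: that the closed finite-covolume subgroup $H < G$ has no global fixed point in $\partial X$, and that $H$ still acts minimally on $X$. The only way I would use the covolume hypothesis is through the fact that finite invariant covolume means the homogeneous space $G/H$ carries a $G$-invariant probability measure $\mu$; this is precisely the device that lets me average $H$-invariant objects into $G$-invariant ones and then feed them into the minimality and no-fixed-point hypotheses on $G$.

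For the boundary statement I would argue by contradiction: suppose $H$ fixes some $\xi \in \partial X$. Since $H$ stabilises $\xi$, the assignment $gH \mapsto g\xi$ is a well-defined $G$-equivariant Borel map $G/H \to \partial X$, and pushing $\mu$ forward along it yields a $G$-invariant probability measure $\nu$ on $\partial X$. At this point I invoke the dichotomy of Adams and Ballmann for a probability measure on the boundary of a proper $\CAT(-1)$ (more generally $\CAT(0)$) space: either $G$ fixes a point of $\partial X$, or $G$ stabilises a nonempty flat (a convex subset isometric to a Euclidean space) in $X$. The first alternative contradicts outright the hypothesis that the $G$-action has no global fixed point at infinity. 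In the second, the flat is a nonempty closed convex $G$-invariant subset, so by minimality of the $G$-action it is all of $X$; then $X$ is itself Euclidean, contradicting the no-Euclidean-factor assumption (and a $0$-dimensional flat would be a $G$-fixed point in $X$, which minimality likewise forbids for nontrivial $X$). Hence $H$ has no fixed point in $\partial X$.

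For minimality I would suppose $Y \subsetneq X$ is a nonempty closed convex $H$-invariant subset and set $\varphi(x) = d(x,Y)$, a nonnegative convex $H$-invariant function. The crucial observation is that $\varphi(g^{-1}x)$ depends only on the coset $gH$, since $Y$ is $H$-invariant; therefore
$$F(x) = \int_{G/H} \varphi(g^{-1}x)\, d\mu(gH)$$
is well-defined, convex as an average of convex functions, and $G$-invariant by invariance of $\mu$. Every sublevel set of $F$ is then a closed convex $G$-invariant subset, so minimality of the $G$-action forces it to be empty or all of $X$, whence $F$ is constant, say $F \equiv m \geqslant 0$. If $m = 0$, then $g^{-1}x \in Y$ for $\mu$-almost every $g$ and every $x$; choosing a countable dense subset of $X$ and intersecting the corresponding conull sets produces a single $g$ with $g^{-1}$ of that dense set contained in $Y$, so that $Y = \overline{g^{-1}(\text{dense set})} = X$, contradicting $Y \subsetneq X$.

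The main obstacle is therefore to exclude $m > 0$, and this is exactly where the no-Euclidean-factor hypothesis enters. Along any bi-infinite geodesic $\gamma$ the functions $t \mapsto \varphi(g^{-1}\gamma(t))$ are convex, nonnegative, and have constant average $m$; a Fatou argument applied at both ends shows that for $\mu$-almost every $g$ this function is bounded, hence, being convex on all of $\R$, constant. Thus for almost every $g$ the set $gY$ lies at constant distance from $\gamma$, which by the flat strip theorem forces $\gamma$ to bound a flat strip with its projection into $gY$. Assembling such parallel geodesics in every direction produces a nontrivial Euclidean de Rham factor of $X$, contradicting the hypothesis; so $m = 0$ is the only possibility and $H$ acts minimally. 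I expect the delicate point of this last step to be the bookkeeping of the almost-everywhere quantifiers — ensuring that the family of flat strips genuinely assembles into a Euclidean factor rather than remaining a disorganised collection of parallel geodesics — which is precisely the input that the $\CAT(0)$ splitting theory behind the no-Euclidean-factor assumption is designed to provide.
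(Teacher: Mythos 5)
First, a point of order: the paper does not prove this statement at all --- it is quoted verbatim from \cite{Caprace-Monod:subgroups} as background, and what the paper proves later are its own infinite-dimensional analogues (Proposition \ref{prop:no fixed point in boundary} and Proposition \ref{prop:borel density result}). So your attempt can only be measured against the original Caprace--Monod argument. Your first half (no fixed point of $H$ in $\partial X$) is essentially that argument and is essentially correct: push the invariant probability measure on $G/H$ forward along the continuous $G$-equivariant map $gH \mapsto g\xi$, then apply the Adams--Ballmann dichotomy and kill both horns with minimality and the absence of a Euclidean factor. The only caveat is that the headline Adams--Ballmann theorem is about \emph{amenable} groups; what you need is the dichotomy for the full stabiliser of a probability measure on $\partial X$, which is the technical core of their proof rather than its stated conclusion, and you should cite it as such.

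The minimality half has two genuine gaps. (a) Integrability: nothing guarantees that $F(x)=\int_{G/H} d(g^{-1}x,Y)\,d\mu(gH)$ is finite. The integrand is unbounded on $G/H$ whenever $G/H$ is noncompact, and $\mu$ gives no decay, so $F$ may be identically $+\infty$, in which case everything after its definition is vacuous. The standard repair is to integrate the normalised quantity $d(g^{-1}x,Y)-d(g^{-1}x_{0},Y)$, which is bounded in absolute value by $d(x,x_{0})$; but the resulting function is then only quasi-invariant, $F(gx)=F(x)+c(g)$ for a continuous homomorphism $c\colon G\to\R$, and the case $c\neq 0$ has to be excluded separately --- one shows that $G$ permutes the nested sublevel sets $\{F\leqslant a\}$, whose visual boundaries intersect in a set of intrinsic radius at most $\pi/2$ whose circumcentre would be a $G$-fixed point at infinity. (This is exactly the mechanism, via Theorem \ref{thm:nested closed convex sets}, that the present paper uses in Proposition \ref{prop:no fixed point in boundary}.) (b) More seriously, your endgame in the case $m>0$ is not a proof. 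It presupposes that $X$ contains geodesic lines, and in every direction --- but a proper $\CAT(0)$ space carrying a minimal action need not be geodesically complete, so there may be no lines to apply Fatou to. And even granting lines, a family of flat strips between each line $\gamma$ and $\mu$-almost every translate $gY$ (with the exceptional null set depending on $\gamma$) does not assemble into a product splitting $X\cong X'\times\R^{k}$; that ``bookkeeping'' is precisely the content of nontrivial structure theory, not something the de Rham decomposition hands you. The actual route (Adams--Ballmann, Caprace--Monod) is different: from constancy of the averaged function one deduces, using equality in the convexity inequality along a countable dense family of geodesic segments, that for almost every $g$ the function $d(\cdot,gY)$ is \emph{affine} on $X$; one then invokes the fact that on a proper $\CAT(0)$ space with a minimal isometric action every affine function arises from the Euclidean factor, so in the absence of a Euclidean factor it is constant, forcing $d(\cdot,gY)\equiv 0$ and hence $Y=X$. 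That last input --- the identification of affine functions on minimal spaces with the Euclidean factor --- is the missing idea your sketch gestures at but does not supply.
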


In our context, we are interested in the case of the group $G=\rho_{t}(\Isom(\H^{n}))$ acting continuously by isometries on the space $X = \H^{\infty}$. Instead of a closed subgroup in $\Isom(\H^{n})$ with finite invariant covolume, we consider a convex-cocompact subgroup $A < \Isom(\H^{n})$ and its image in $\Isom(\H^{\infty})$ by an irreducible representation $\rho_{t}$ ($0<t<1$). In this case, it remains at least true that $\rho_{t}(A)$ admits no global fixed point in the boundary of $\partial \H^{\infty}$ as soon as $A < \Isom(\H^{n})$ has no fixed point in $\H^{n}\cup\partial\H^{n}$. A similar statement was proved by Caprace and Monod as a first step to obtain Theorem \ref{thm:caprace-monod}, see \cite[Proposition 2.1]{Caprace-Monod:subgroups}. It relies in particular on the following result asserting that the intersection of nested sequences of closed convex sets in hyperbolic spaces is either non-empty, or it converges to one point in the boundary.

\begin{theorem}[\cite{CL}, Theorem $1.1$] \label{thm:nested closed convex sets}
    Let $X$ be a complete $\CAT(0)$ space of finite telescopic dimension and $\{X_{\alpha}\}_{\alpha\in A}$ be a filtering family of closed convex subspaces. Then either the intersection $\bigcap_{\alpha\in A}X_{\alpha}$ is non-empty, or the intersection of the visual boundaries $\bigcap_{\alpha\in A}\partial X_{\alpha}$ is a non-empty subset of $\partial X$ of intrinsic radius at most $\pi/2$.
\end{theorem}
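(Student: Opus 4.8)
The plan is to fix a basepoint $x_{0} \in X$ and to split the argument according to whether the family stays at bounded distance from $x_{0}$ or escapes to infinity; these two alternatives will produce respectively the non-empty intersection and the non-empty boundary intersection. Throughout I would use that in a complete $\CAT(0)$ space every non-empty closed convex set $C$ carries a well-defined nearest-point projection $\pi_{C}$ and, when $C$ is bounded, a unique circumcentre lying in $C$. Recall also that the family being filtering means it is directed by reverse inclusion, so that the quantities attached to it vary monotonically along this order.

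First I would treat the case $\sup_{\alpha} d(x_{0}, X_{\alpha}) < \infty$, say bounded by $M$. Intersecting with a ball, the sets $C_{\alpha} = X_{\alpha} \cap \overline{B}(x_{0}, M)$ form a filtering family of non-empty (each contains $\pi_{X_{\alpha}}(x_{0})$) bounded closed convex sets, with $\bigcap_{\alpha} C_{\alpha} = \left(\bigcap_{\alpha} X_{\alpha}\right) \cap \overline{B}(x_{0},M)$. Writing $r_{\alpha}$ and $c_{\alpha}$ for the circumradius and circumcentre of $C_{\alpha}$, the filtering property forces $r_{\alpha}$ to decrease to some $r \geqslant 0$; applying the $\CAT(0)$ convexity inequality for $z \mapsto d(\cdot,z)^{2}$ to the midpoint of two circumcentres then shows that $(c_{\alpha})$ is a Cauchy net, whose limit lies in every $C_{\alpha}$ and hence in $\bigcap_{\alpha} X_{\alpha}$. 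This already yields the first alternative.

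For the second alternative I would assume $\sup_{\alpha} d(x_{0}, X_{\alpha}) = \infty$, so the projections $p_{\alpha} = \pi_{X_{\alpha}}(x_{0})$ satisfy $d(x_{0}, p_{\alpha}) \to \infty$ monotonically along the filtering order. The key geometric input is the obtuse-angle property of projections: whenever $X_{\beta} \subseteq X_{\alpha}$ one has $p_{\beta} \in X_{\alpha}$, hence $\angle_{p_{\alpha}}(x_{0}, p_{\beta}) \geqslant \pi/2$, and $\CAT(0)$ comparison upgrades this to $d(x_{0}, p_{\beta})^{2} \geqslant d(x_{0}, p_{\alpha})^{2} + d(p_{\alpha}, p_{\beta})^{2}$. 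This Pythagorean estimate makes the directions of the rays $[x_{0}, p_{\alpha})$ behave coherently, and I would extract from them a limit direction $\xi \in \partial X$. Since the geodesics $[p_{\alpha}, p_{\beta}]$ lie in the convex set $X_{\alpha}$ and the $p_{\beta}$ run off to infinity, the limiting ray is contained in the closed set $X_{\alpha}$, so $\xi \in \partial X_{\alpha}$ for every $\alpha$, giving $\xi \in \bigcap_{\alpha} \partial X_{\alpha} \neq \emptyset$. Finally, passing the inequality $\angle_{p_{\alpha}}(x_{0}, p_{\beta}) \geqslant \pi/2$ to the limit bounds the Tits angle between the limit boundary points and a common centre by $\pi/2$, which is precisely the assertion that $\bigcap_{\alpha} \partial X_{\alpha}$ has intrinsic radius at most $\pi/2$.

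The main obstacle is the extraction of the limit direction $\xi$: since $X$ need not be proper (this is exactly the situation of $\H^{\infty}$), the cone bordification $X \cup \partial X$ is not compact and one cannot simply invoke a finite-intersection argument to produce a limit. This is where the hypothesis of finite telescopic dimension is indispensable: it guarantees that the Tits boundary $\partial_{T} X$ is a finite-dimensional complete $\CAT(1)$ space in which bounded subsets of radius at most $\pi/2$ admit circumcentres, and it is through such a circumcentre that $\xi$ is produced and the radius bound $\pi/2$ is simultaneously certified. Making the convergence of the projection directions precise, and identifying the resulting circumcentre as a boundary point common to all the $\partial X_{\alpha}$, would be the technical heart of the argument.
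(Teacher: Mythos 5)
A point of order first: the paper does not prove this statement at all --- it is quoted from Caprace--Lytchak (\cite{CL}, Theorem 1.1), with a pointer to Duchesne (\cite{Duc13}, Theorem 4.3) for an alternative geometric proof. So your attempt can only be measured against those external arguments, not against anything in the paper. Your bounded case is correct and complete in outline: for a filtering family of non-empty bounded closed convex sets, monotone circumradii together with the $\CAT(0)$ convexity inequality make the circumcentres a Cauchy net whose limit lies in every member; note that this half needs no dimension hypothesis whatsoever.

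The genuine gap is in the unbounded case, exactly at the phrase ``behave coherently''. The Pythagorean estimate $d(x_{0},p_{\beta})^{2}\geqslant d(x_{0},p_{\alpha})^{2}+d(p_{\alpha},p_{\beta})^{2}$ does \emph{not} force the directions of the rays $[x_{0},p_{\alpha})$ to converge, and in general they do not. Take $X=\ell^{2}$ and $X_{n}=\{x \mid x_{1}=\dots=x_{n}=1\}$: this is a filtering (nested) family of closed convex sets with empty intersection, the projections of the origin are $p_{n}=(1,\dots,1,0,\dots)$, and your estimate holds with equality; yet the unit vectors $p_{n}/\sqrt{n}$ become pairwise almost orthogonal and have no limit, and in fact $\bigcap_{n}\partial X_{n}=\emptyset$, so the theorem itself is false here (as it must be, since $\ell^{2}$ has infinite telescopic dimension). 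This shows that no argument relying only on the projection estimates can produce $\xi$, and that finite telescopic dimension must enter in proving \emph{non-emptiness} of $\bigcap_{\alpha}\partial X_{\alpha}$, not merely be invoked afterwards. That is also where your appeal to it is circular: a circumcentre in the Tits boundary can only be taken of a set already known to be non-empty, bounded, and of radius at most $\pi/2$ --- but exhibiting a single point of $\bigcap_{\alpha}\partial X_{\alpha}$ is precisely the difficulty. The known proofs close this gap with real machinery: Caprace and Lytchak argue via the structure theory of finite-dimensional $\CAT(1)$ spaces (existence of circumcentres for spaces of intrinsic radius at most $\pi/2$) and an induction on dimension, with the non-emptiness and the $\pi/2$ bound emerging from the same argument rather than sequentially, and Duchesne's proof is a geometric variant of the same idea. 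As it stands, your sketch proves the easy alternative and only restates the hard one.
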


See also \cite[Theorem 4.3]{Duc13} for a more geometric proof of this statement. In particular, when $X = \H^{\infty}$, if the intersection of the visual boundaries is non-empty, then it is reduced to a point $\{\xi\}$.

\begin{proposition}\label{prop:no fixed point in boundary}
Let $\Gamma < \Isom(\H^{n})$ be a subgroup which has no fixed point in $\H^{n} \cup \partial \H^{n}$. Then $\rho_{t}(\Gamma) < \rho_{t}(\Isom(\H^{n})) \simeq \Isom(\mathcal{C}_{t})$ has no fixed point in $\partial\H^{\infty}$, where $\mathcal{C}_{t}$ is the convex hull of $f_{t}(\H^{n})$ inside $\H^{\infty}$.
\end{proposition}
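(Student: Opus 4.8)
The plan is to argue by contradiction: assume $\rho_{t}(\Gamma)$ fixes a point $\xi\in\partial\H^{\infty}$ and manufacture a fixed point of $\Gamma$ in $\H^{n}\cup\partial\H^{n}$, contradicting the hypothesis. The governing dichotomy is whether or not $\xi$ lies in $\partial\mathcal{C}_{t}=\partial f_{t}(\partial\H^{n})$. First I would dispose of the easy case $\xi\in\partial\mathcal{C}_{t}$: then $\xi=\partial f_{t}(\eta)$ for a unique $\eta\in\partial\H^{n}$, since $\partial f_{t}$ is a homeomorphism onto its image (Proposition \ref{prop:extension_to_boundary}). The equivariance $f_{t}(\gamma x)=\rho_{t}(\gamma)f_{t}(x)$ passes to the boundary as $\partial f_{t}(\gamma\eta)=\rho_{t}(\gamma)\,\partial f_{t}(\eta)$, so $\rho_{t}(\gamma)\xi=\xi$ forces $\gamma\eta=\eta$ by injectivity; hence $\Gamma$ would fix $\eta\in\partial\H^{n}$, which is excluded. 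The same reasoning handles \emph{any} $\rho_{t}(\Gamma)$-fixed point lying in $\partial\mathcal{C}_{t}$, a remark I will reuse below.

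It then remains to treat $\xi\notin\partial\mathcal{C}_{t}$. Here I would introduce a Busemann function $b_{\xi}$ at $\xi$ and set $m=\inf_{y\in\mathcal{C}_{t}}b_{\xi}(y)$. If $m=-\infty$, a minimizing sequence eventually enters every horoball $\{b_{\xi}\le s\}$; since $\mathcal{C}_{t}$ is proper (Theorem \ref{thm:exotic deformation DE}) and $\bigcap_{s}\overline{\{b_{\xi}\le s\}}=\{\xi\}$ in the cone topology of the $\CAT(-1)$ space, such a sequence converges to $\xi$, forcing $\xi\in\partial\mathcal{C}_{t}$, a contradiction. So $m$ is finite. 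Because $\mathcal{C}_{t}$ is $\rho_{t}(\Gamma)$-invariant and $b_{\xi}\circ\rho_{t}(\gamma)=b_{\xi}+c(\gamma)$ for the Busemann character $c:\Gamma\to\R$, comparing infima over $\mathcal{C}_{t}$ gives $m=m+c(\gamma)$, whence $c\equiv 0$. Consequently each sublevel set $C_{s}=\{y\in\mathcal{C}_{t}\mid b_{\xi}(y)\le s\}$ is a closed convex $\rho_{t}(\Gamma)$-invariant subset, and $\{C_{s}\}_{s>m}$ is a filtering family.

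The next step is to apply the nested convex set theorem to $X=\H^{\infty}$, which is legitimate since $\H^{\infty}$ is covered by the cited statement (Theorem \ref{thm:nested closed convex sets} and its subsequent remark). If $\bigcap_{s>m}C_{s}\neq\emptyset$, this intersection is the min-set $M=\{y\in\mathcal{C}_{t}\mid b_{\xi}(y)=m\}$; as $\xi\notin\partial\mathcal{C}_{t}$, the set $\mathcal{C}_{t}$ carries no geodesic asymptotic to $\xi$, so $b_{\xi}$ is strictly convex along every nondegenerate segment it contains, forcing $M$ to be a single $\rho_{t}(\Gamma)$-fixed point $p$. Then $\rho_{t}(\Gamma)\cdot p=\{p\}$ is bounded, hence $\rho_{t}(\Gamma)\cdot f_{t}(o)$ is bounded (it stays within $d(p,f_{t}(o))$ of $p$); since $f_{t}$ is an equivariant quasi-isometric embedding, the orbit $\Gamma\cdot o$ is bounded in $\H^{n}$, producing a fixed point of $\Gamma$ by the circumcenter construction — contradiction. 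In the remaining alternative $\bigcap_{s}\partial C_{s}$ is a single point $\eta\in\partial\H^{\infty}$; since $\eta\in\partial C_{s}\subset\partial\mathcal{C}_{t}$ and each $C_{s}$ is $\rho_{t}(\Gamma)$-invariant, $\rho_{t}(\Gamma)$ fixes $\eta\in\partial\mathcal{C}_{t}$, which falls under the easy case already treated. Either branch yields the contradiction.

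The main obstacle, and what makes the argument more than a transcription of the finite-dimensional proof, is that $\H^{\infty}$ is non-proper: one cannot conclude by compactness and must instead route the escaping horoball-slices through the nested convex set theorem in its $\H^{\infty}$-valid form. The second delicate point is that the min-set alternative only delivers a fixed point in the auxiliary space $\mathcal{C}_{t}$, so the crux is transporting boundedness back to $\H^{n}$ via the quasi-isometric embedding $f_{t}$; establishing that the Busemann character vanishes (so that the family $\{C_{s}\}$ is genuinely invariant) is the linchpin that makes both transports available.
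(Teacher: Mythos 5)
Your proof is correct and takes essentially the same route as the paper's: assume a fixed point $\xi\in\partial\H^{\infty}$, study the Busemann function at $\xi$ on $\mathcal{C}_{t}$, and split according to whether its infimum is attained — if attained, transport boundedness back to $\H^{n}$ via the equivariant quasi-isometric map $f_{t}$ and conclude with a circumcenter; if not, apply the nested-convex-sets theorem (Theorem \ref{thm:nested closed convex sets}) to force a fixed point in $\partial\mathcal{C}_{t}=f_{t}(\partial\H^{n})$, which is excluded. Your explicit verifications (vanishing of the Busemann character, the case $\xi\in\partial\mathcal{C}_{t}$ via injectivity of $\partial f_{t}$) merely spell out steps the paper delegates to Lemma \ref{lem:some properties} or leaves implicit.
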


\begin{proof}
Suppose for a contradiction that $\rho_{t}(\Gamma)$ has a fixed point $\xi\in \partial\H^{\infty}$. Fix a base point $x_{0}\in \mathcal{C}_{t}$. The Busemann function restricted to $\mathcal{C}_{t}$, $\beta_{\xi,x_{0}}|_{\mathcal{C}_{t}}$, is convex and does not have a minimum on $\mathcal{C}_{t}$. 

Indeed, if the minimum set $\Min(\beta_{\xi,x_{0}}|_{\mathcal{C}_{t}}) \subset \mathcal{C}_{t}$ is non-empty, it should lie inside some horosphere of $\H^{\infty}$ centered at $\xi$. It is moreover bounded, otherwise its closure in $\overline{\H^{\infty}}$ would contain $\xi$, and we would have $\xi \in \partial \mathcal{C}_{t} = \partial(f_{t}(\H^{n}))$ which is in contradiction with the second point of Lemma \ref{lem:some properties}. Since $\mathcal{C}_{t}$ is at finite distance from $f_{t}(\H^{n})$, there exists $K \in \R$ such that the $\rho_{t}(\Gamma)$-invariant set
$$\{x \in f_{t}(\H^{n}) \mid d(x,\Min(\beta_{\xi,x_{0}}|_{\mathcal{C}_{t}}))\leqslant K\}$$
is non-empty. It is moreover bounded and its preimage by $f_{t}$ is a bounded subset of $\H^{n}$ which is $\Gamma$-invariant by $\rho_{t}$-equivariance of $f_{t}$. It then has a circumcentre which is a fixed point for $\Gamma$ in $\H^{n}$, this contradicts the hypothesis on $\Gamma$. So $\beta_{\xi,x_{0}}|_{\mathcal{C}_{t}}$ does not have a minimum. 

For $a > \inf(\beta_{\xi,x_{0}}|_{\mathcal{C}_{t}})$, define $$C_{a} = \beta_{\xi,x_{0}}|_{\mathcal{C}_{t}}^{-1}((-\infty,a)) \subset \mathcal{C}_{t}.$$
All the $C_{a}$, for $a > \inf(\beta_{\xi,x_{0}}|_{\mathcal{C}_{t}})$, are convex and thus $\bigcap_{a > \inf(\beta_{\xi,x_{0}}|_{\mathcal{C}_{t}})} C_{a} = \emptyset$. Since $\H^{\infty}$ is $\delta$-hyperbolic, its telescopic dimension is finite (it is actually equal to $1$), hence
$\bigcap_{a > \inf(\beta_{\xi,x_{0}}|_{\mathcal{C}_{t}})} \partial C_{a}$ is non-empty (by Theorem \ref{thm:nested closed convex sets}) and must contain $\xi$, so we deduce again that $\xi \in \partial \mathcal{C}_{t} = \partial(f_{t}(\H^{n}))$, which is impossible.
\end{proof}

For $X = \H^{n}$ or $\H^{\infty}$, the \emph{limit set} of a subgroup $\Gamma < \Isom(X)$ is $\Lambda(\Gamma) := \overline{\Gamma\compactcdot o}\cap \partial X$ for any base point $o \in X$. A subgroup $\Gamma < \Isom(X)$ is called \emph{non-elementary} if it has no fixed point in $X \cup \partial X$ and fixes no geodesic in $X$. Equivalently, $\Gamma$ is elementary if and only if its limit set is finite.

\begin{corollary}\label{cor:image of non-elementary group}
For $0<t\leqslant 1$, if $A < \Isom(\H^{n})$ is a discrete and non-elementary subgroup, then $\rho_{t}(A)$ is also non-elementary in $\Isom(\H^{\infty})$.
\end{corollary}

\begin{proof}
    The image $\rho_{t}(A)$ has no global fixed point in $\H^{\infty}$, otherwise $A$ would consist only of elliptic isometries since $\rho_{t}$ preserves the types of the isometries (see \cite[Proposition 2.1]{MP}). Moreover, by Proposition \ref{prop:no fixed point in boundary}, $\rho_{t}(A)$ also has no fixed point in $\partial \H^{\infty}$. And if $\rho_{t}(A)$ preserves a geodesic in $\H^{\infty}$ with endpoints $\xi,\eta \in \partial\H^{\infty}$, then it leaves the pair $\{\xi,\eta\}$ invariant. The limit set of $\rho_{t}(A)$ is then contained in $\{\xi,\eta\}$. This contradicts the fact that $A$ is non-elementary since the $\rho_{t}$-equivariant embedding $f_{t}$ extends to an injective map from the limit set of $A$ to that of $\rho_{t}(A)$.
\end{proof}

\subsection{Spectral representation for real Hilbert spaces}

Before discussing the deformations of convex-cocompact representations by bending, we first recall the spectral representation theorem for operators on a real Hilbert space that we will use in the next section to show that the spaces of deformations are infinite-dimensional. In this paragraph, $\mathcal{H}$ refers to a real Hilbert space and $\mathbf{B}(\mathcal{H})$ is the set of bounded operators on $\mathcal{H}$.

Let $T \in \mathbf{B}(\mathcal{H})$ be a bounded operator on $\mathcal{H}$. The \emph{adjoint} operator of $T$, denoted by $T^{*}$, is the (unique) bounded linear operator on $\mathcal{H}$ such that $\langle Tx,y\rangle = \langle x, T^{*}y\rangle$ for all $x,y \in \mathcal{H}$. The operator $T$ is said to be \emph{normal} if $TT^{*} = T^{*}T$.

In the finite-dimensional situation, every normal operator on a real Hilbert space is othogonally equivalent to a block-diagonal matrix of the form
$$\begin{pmatrix}
    \lambda_{1}& & & & \\
    & \lambda_{2} & & & \\
    & & \ddots & & & \\
    & & & B_{1} & & \\
    & & & & B_{2} & \\
    & & & & & \ddots
\end{pmatrix}$$
where the $\lambda_{k}$ are its real eigenvalues (counted with their multiplicities) and the $B_{i}$ are $2\times 2$ matrices of the form $\begin{pmatrix}
    a_{k} & -b_{k}\\
    b_{k} & a_{k}
\end{pmatrix}$ corresponding to the complex eigenvalues $a_{k} \pm ib_{k}$. There is an analog of this decomposition in the infinite-dimensional situation.

In \cite{Goodrich}, Goodrich obtained a similar description for operators on infinite-dimensional real Hilbert spaces where the blocks correspond to operators of the form given in the following example.

\begin{example}[\cite{Goodrich}]\label{ex:normal_operator}
Let $X \subset \C$ be a compact subset which is symmetric about the real axis. Let $\mu$ be a regular Borel measure on $X$ and consider the real Hilbert space $\mathcal{H}= \L^{2}(X,\mu)$ of real-valued square-integrable functions on $X$ with respect to the measure $\mu$. Suppose that $\mu$ is symmetric about the real axis, \ie $\mu(U) = \mu(U^{*})$ where $U^{*}$ is the reflection of 
$U$ about the real axis. 
Let $\mathcal{H}_{e}$ be the set of functions in $\mathcal{H}$ that are symmetric about the real axis and $\mathcal{H}_{o}$ those that are anti-symmetric (the subscripts refer to "even" and "odd"). We have $\mathcal{H} = \mathcal{H}_{e} \oplus \mathcal{H}_{o}$. 

Define the operator $T$ on $\mathcal{H}$ by 
$$Tf = \begin{pmatrix}
    M_{\Re} & -M_{\Im}\\
    M_{\Im} & M_{\Re}
\end{pmatrix}\begin{pmatrix}
    f_{e}\\
    f_{o}
\end{pmatrix},$$
where $f = f_{e} + f_{o} \in \mathcal{H}$, $\Re,\Im: \C \to \R$ denote the real an imaginary parts of a complex number and $M_{\varphi}:\L^{2}(\C) \to \L^{2}(\C)$ is the operator defined by
$$M_{\varphi}(f)(z) = \varphi(z)f(z)$$
for a function $\varphi: \C \to \C$. Then $T$ is a normal operator on $\mathcal{H}$ and its adjoint is
$$T^{*}f = \transp{T}f = \begin{pmatrix}
    M_{\Re} & M_{\Im}\\
    -M_{\Im} & M_{\Re}
\end{pmatrix}\begin{pmatrix}
    f_{e}\\
    f_{o}
\end{pmatrix}.$$
\end{example}

The following theorem shows that every normal operator is orthogonally equivalent to a direct sum of operators of this form.

\begin{theorem}[\cite{Goodrich}, Theorem 3]\label{thm:spectral representation}
    Every bounded normal operator $T$ on a real Hilbert space $\mathcal{H}$ is orthogonally equivalent to an orthogonal sum $\bigoplus_{i\in I} T_{i}$ of operators on spaces $\L^{2}(\C,\mu_{i})$ of the type defined in Example \ref{ex:normal_operator}.
\end{theorem}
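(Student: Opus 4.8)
The plan is to reduce the real spectral problem to the classical spectral theorem for normal operators on a \emph{complex} Hilbert space and then descend back to the real setting by keeping track of the natural conjugation. First I would form the complexification $\mathcal{H}_{\C} = \mathcal{H}\oplus i\mathcal{H}$, equipped with the Hermitian inner product for which $\mathcal{H}$ is exactly the set of real vectors and the conjugation $C(u+iv) = u-iv$ is anti-unitary (\ie antilinear and isometric, with $C^{2} = \Id$). The operator $T$ extends to a bounded complex-linear operator $T_{\C}$ on $\mathcal{H}_{\C}$, and since $T$ is normal so is $T_{\C}$; because $T$ and its adjoint $T^{*}$ are real operators, both $T_{\C}$ and $T_{\C}^{*}$ commute with $C$. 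The spectrum $\sigma(T_{\C})\subset\C$ is then compact (as $T$ is bounded) and, by the commutation with $C$, symmetric about the real axis; it will serve as the compact symmetric set $X$ of Example \ref{ex:normal_operator}.

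Next I would invoke the multiplicity form of the complex spectral theorem, but arranged so as to respect the real structure. Starting from a vector $\xi_{1}\in\mathcal{H}\subset\mathcal{H}_{\C}$, the reducing cyclic subspace $Z_{1} = \overline{\Span\{T_{\C}^{n}(T_{\C}^{*})^{m}\xi_{1}\}}$ is $C$-invariant, because $T_{\C},T_{\C}^{*}$ commute with $C$ and $\xi_{1}$ is $C$-fixed; since $C$ is anti-unitary, its orthogonal complement is $C$-invariant and reducing as well. Iterating this construction (using Zorn's lemma in the non-separable case) yields an orthogonal decomposition of $\mathcal{H}_{\C}$ into $C$-invariant reducing cyclic subspaces, each generated by a \emph{real} cyclic vector $\xi_{i}$. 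On each piece the spectral theorem provides a unitary onto $\L^{2}(\C,\mu_{i})$ sending $\xi_{i}$ to the constant function $1$ and $T_{\C}$ to multiplication $M_{z}$ by the coordinate, where $\mu_{i}(B) = \langle E(B)\xi_{i},\xi_{i}\rangle$ and $E$ is the spectral measure of $T_{\C}$. The commutation $CT_{\C}C = T_{\C}$ gives $CE(B)C = E(B^{*})$, and since $\xi_{i}$ is real and $C$ anti-unitary, $\mu_{i}(B^{*}) = \langle CE(B)\xi_{i},\xi_{i}\rangle = \overline{\langle E(B)\xi_{i},\xi_{i}\rangle} = \mu_{i}(B)$, so each $\mu_{i}$ is symmetric about the real axis, exactly as required.

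It then remains to transport the real structure through these unitaries and recognise the operator of Example \ref{ex:normal_operator}. Under the identification with $\L^{2}(\C,\mu_{i})$, the conjugation $C$ becomes the anti-unitary involution $\tilde C f(z) = \overline{f(\bar z)}$: this map is antilinear, commutes with $M_{z}$, and fixes the constant $1$, hence agrees with the image of $C$ on the dense set of functions obtained by applying polynomials in $z$ and $\bar z$ to $1$, and therefore everywhere. The real Hilbert space $\mathcal{H}$ is recovered as the fixed-point set $\{f = u+iv : u(z) = u(\bar z),\ v(z) = -v(\bar z)\}$ of even real part and odd imaginary part. Writing $f_{e} = u$ and $f_{o} = v$ identifies this set with $\mathcal{H}_{e}\oplus\mathcal{H}_{o}$, and a direct computation with $M_{z} = M_{\Re}+iM_{\Im}$ shows that $M_{z}$ restricted to the fixed-point set sends $(f_{e},f_{o})$ to $(M_{\Re}f_{e}-M_{\Im}f_{o},\ M_{\Im}f_{e}+M_{\Re}f_{o})$, which is precisely the block operator $T_{i}$ of Example \ref{ex:normal_operator}. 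Summing orthogonally over $i$ yields the desired orthogonal equivalence $T\cong\bigoplus_{i}T_{i}$.

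The hard part is the second step: the bare complex spectral theorem only provides a unitary equivalence, and one must ensure it can be chosen \emph{equivariantly} for the conjugation $C$, so that it restricts to an orthogonal equivalence over the real field. Building the cyclic decomposition out of real vectors $\xi_{i}\in\mathcal{H}$ is exactly what simultaneously guarantees the $C$-invariance of the summands and the conjugation-symmetry of the measures $\mu_{i}$; verifying that the orthogonal complement of a $C$-invariant reducing subspace is again $C$-invariant and reducing — so that the induction never leaves the real category — is the technical crux of the argument.
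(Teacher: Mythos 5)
The paper itself contains no proof of this theorem: it is imported verbatim from \cite{Goodrich} (Theorem 3 there), so the only thing to assess is whether your blind argument is sound, and it is. Your complexification strategy is the standard route to the real spectral theorem and, in substance, the same kind of argument as in the cited source: extend $T$ to a normal operator $T_{\C}$ on $\mathcal{H}_{\C}$ commuting with the anti-unitary conjugation $C$, deduce $CE(B)C = E(B^{*})$ for the spectral measure, build the $*$-cyclic decomposition from \emph{real} cyclic vectors so that the scalar measures $\mu_{i}(B) = \langle E(B)\xi_{i},\xi_{i}\rangle$ are symmetric about the real axis, transport $C$ to $\tilde{C}f(z) = \overline{f(\bar{z})}$ by your density argument, and recognise $M_{z}$ acting on the $\tilde{C}$-fixed vectors as the block operator of Example \ref{ex:normal_operator}. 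All of these verifications are correct. Two points are left implicit and should be recorded in a written-up version, though both are one-line checks rather than gaps. First, the Zorn/exhaustion step needs the fact that every nonzero $C$-invariant reducing subspace $W$ contains a nonzero $C$-fixed vector: for $w \in W \setminus \{0\}$, the vectors $w + Cw$ and $i(w - Cw)$ are both $C$-fixed and cannot both vanish. The same observation shows each cyclic piece $Z_{i}$ is the complexification of its real points, whence $\mathcal{H} = \bigoplus_{i}\left(Z_{i} \cap \mathcal{H}\right)$ and the individual equivalences can legitimately be summed over $i$ to produce an orthogonal (real-linear) equivalence for $T$ itself, which is what the statement asserts. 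Second, your identification of the $\tilde{C}$-fixed set with $\mathcal{H}_{e} \oplus \mathcal{H}_{o}$ via $(f_{e},f_{o}) \mapsto f_{e} + f_{o}$ is isometric only because the cross terms $\int f_{e}\, f_{o}\, \d\mu_{i}$ vanish (an even function times an odd function integrates to zero against a symmetric measure); this is where the symmetry of $\mu_{i}$ enters a second time and deserves an explicit sentence.
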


See also \cite[Section 4.1]{bhat-john} for a different description of the spectral representation that distinguishes between real and complex spectral values. 

\begin{proposition}\label{prop:infinite dimensional centralizer for orthogonal operator}
    Let $T$ be an orthogonal operator on a real Hilbert space $\mathcal{H}$. Denote by $Z_{T}$ its centralizer in $\O(\mathcal{H})$, $Z_{T} = \mathcal{Z}(T) \cap \O(\mathcal{H})$. Then $Z_{T}$ is an infinite-dimensional Lie group.
\end{proposition}

\begin{proof}
    The groups $\mathcal{Z}(T)$ and $\O(\mathcal{H})$ are both infinite-dimensional algebraic groups defined respectively by the polynomials $p_{1}(x) = xT-Tx$ and $p_{2}(x) = \transp{x}x - I$. So
    $$Z_{T} = \mathcal{Z}(T) \cap \O(\mathcal{H}) = \{S \in \GL(\mathcal{H}) \mid p_{1}(S) = 0, p_{2}(S) = 0\}$$
    where $\GL(\mathcal{H})$ is the set of invertible bounded linear operators of $\mathcal{H}$, and its Lie algebra is
    \begin{align*}
        \Lie(Z) &= \{S \in \mathbf{B}(\mathcal{H}) \mid \d p_{1}(I)\cdot S = 0,\ \d p_{2}(I)\cdot S = 0\}\\
        &= \{S \in \mathbf{B}(\mathcal{H}) \mid ST - TS = 0,\ \transp{S} + S = 0\}
    \end{align*}

    The dimension of $Z_{T}$ as a Lie group is that of its Lie algebra. We show that this dimension is infinite.

    Since $T \in \O(\mathcal{H})$ is normal, by Theorem \ref{thm:spectral representation}, we can decompose $\mathcal{H}$ as an orthogonal sum $\mathcal{H} = \bigoplus_{\alpha\in I} \mathcal{H}_{\alpha}$ where each $\mathcal{H}_{\alpha}$ is a space $\L^{2}(\C,\mu_{\alpha})$ for some measure $\mu_{\alpha}$ on $\C$ with compact support and which is symmetric about the real axis. The operator $T$ preserves this decomposition and can also be decomposed as an orthogonal sum $T = \bigoplus_{\alpha\in I} T_{\alpha}$. Since $T$ is orthogonal, we get $\supp(\mu_{\alpha}) \subset \S^{1}$ for every $\alpha \in I$. For $\alpha \in I$, if $f = f_{e} + f_{o} \in \mathcal{H}_{\alpha} = \L^{2}(\C,\mu_{\alpha})$ and $\lambda \in \supp(\mu_{\alpha})$, we have 
    $$T_{\alpha}f(\lambda)= \begin{pmatrix}
    M_{\Re} & -M_{\Im}\\
    M_{\Im} & M_{\Re}
\end{pmatrix}\begin{pmatrix}
    f_{e}\\
    f_{o}
\end{pmatrix}(\lambda) = \begin{pmatrix}
    \Re(\lambda)f_{e}(\lambda) - \Im(\lambda)f_{o}(\lambda)\\
    \Im(\lambda)f_{e}(\lambda) + \Re(\lambda)f_{o}(\lambda)
\end{pmatrix}.$$

Now let $S \in \mathbf{B}(\mathcal{H})$ be a bounded operator such that $S$ decomposes along $\bigoplus_{\alpha\in I} \mathcal{H}_{\alpha}$ into the orthogonal sum $\bigoplus_{\alpha\in I} S_{\alpha}$ such that each $S_{\alpha}$ acts on $\mathcal{H}_{\alpha}$ as
$$S_{\alpha}f= \begin{pmatrix}
0 & - M_{g_{\alpha}}\\
M_{g_{\alpha}} & 0
\end{pmatrix}
\begin{pmatrix}
    f_{e}\\
    f_{o}
\end{pmatrix},$$
where $g_{\alpha} : \supp(\mu_{\alpha}) \to \R$ is a continuous or measurable map.
Then $S_{\alpha}$ is anti-symmetric and commutes with $T_{\alpha}$. Thus the map $g_{\alpha} \mapsto S_{\alpha}$ provides an embedding
of $\mathcal{C}(\supp(\mu_{\alpha}),\R)$ into $\Lie(Z_{T})$. 

If there exists $\alpha \in I$ such that $\supp(\mu_{\alpha})$ is infinite, then $\mathcal{C}(\supp(\mu_{\alpha}),\R)$ is an infinite-dimensional vector space. Hence the dimension of $\Lie(Z_{T})$ is infinite. Otherwise, if all the $\supp(\mu_{\alpha})$ are finite, then $I$ is infinite. This means that $T$ has only eigenvalues and is then orthogonally equivalent to an infinite block-diagonal matrix where the blocks on the diagonal are rotation matrices $\begin{pmatrix}
    \cos(\theta) & -\sin(\theta)\\
    \sin(\theta) & \cos(\theta)
\end{pmatrix}$ corresponding to complex eigenvalues $\cos(\theta) \pm i\sin(\theta)$. In this case, any anti-symmetric matrix $\begin{pmatrix}
    0 & a\\
    -a & 0
\end{pmatrix}$ commutes with $\begin{pmatrix}
    \cos(\theta) & -\sin(\theta)\\
    \sin(\theta) & \cos(\theta)
\end{pmatrix}$ and belong to $\Lie(Z_{T})$.

\end{proof}

\begin{proposition}\label{prop:infinite dimensional centralizer for loxodromic isometry}
    Let $\gamma \in \Isom(\H^{n})$ be a loxodromic isometry and $\rho_{t} : \Isom(\H^{n}) \to \Isom(\H^{\infty})$ be an irreducible representation from Theorem \ref{thm:exotic deformation B}. Then the centralizer of $\rho_{t}(\gamma)$ in $\Isom(\H^{\infty})$ is an infinite-dimensional Lie group consisting of elliptic isometries fixing the axis of $\rho_{t}(\gamma)$ pointwise, and loxodromic isometries which share the same axis with $\rho_{t}(\gamma)$.
\end{proposition}

\begin{proof}
    The representation $\rho_{t}$ preserves the type of the isometries, so $\rho_{t}(\gamma) \in \Isom(\H^{\infty})$ is loxodromic. Denote by $\xi^{\pm} \in \partial \H^{\infty}$ the fixed points of $\rho_{t}(\gamma)$. They correspond to two independant vectors $v^{+}$ and $v^{-}$ in the light cone $\{x\in \mathcal{H}\mid Q(x) = 0\}$. We may assume that $B(v^{+},v^{-}) = -1$. Let $E = \left(\R\compactcdot v^{+}\oplus\R\compactcdot v^{-}\right)^{\perp}$ be the orthogonal complement of $\R\compactcdot v^{+}\oplus\R\compactcdot v^{-}$ in $\mathcal{H}$. The restriction of the bilinear form $B$ to $E$ is a scalar product, so $E$ is a real Hilbert space. We can write the isometry $\rho_{t}(\gamma)$ in a matrix form according to the decomposition $\mathcal{H} = \R\compactcdot v^{+} \oplus \R\compactcdot v^{-} \oplus E$,
    $$\rho_{t}(\gamma)=\left(\begin{array}{ccc}
       \lambda & 0 & 0 \\
       0 & \lambda^{-1} & 0\\
       0 & 0 & T
    \end{array}\right)$$
    where $\lambda > 1$ and $T \in \O(E)$ is an orthogonal transformation of $E$.

    If $g \in \Isom(\H^{\infty})$ commutes with $\rho_{t}(\gamma)$, then $g$ stabilizes the axis of $\rho_{t}(\gamma)$ and acts on it either as the identity or as a translation. In the first case, $g$ is elliptic and can be written in the decomposition  $\mathcal{H} = \R\compactcdot v^{+} \oplus \R\compactcdot v^{-} \oplus E$ as $$g = \left(\begin{array}{ccc}
       1 & 0 & 0 \\
       0 & 1 & 0\\
       0 & 0 & T'
    \end{array}\right)$$
    where $T' \in Z_{T}$, the centralizer of $T$ in $\O(E)$. And in the second case, $g$ has the form
    $$g = \left(\begin{array}{ccc}
       \mu & 0 & 0 \\
       0 & \mu^{-1} & 0\\
       0 & 0 & T'
    \end{array}\right)$$
    with $\mu > 0$, $\mu\neq 1$ and $T'\in Z_{T}$, $g$ is then loxodromic with translation length $\max(\mu,\mu^{-1})$.

    Conversely, any operator of the form
    $$\left(\begin{array}{ccc}
       \mu & 0 & 0 \\
       0 & \mu^{-1} & 0\\
       0 & 0 & T'
    \end{array}\right)$$
    for $\mu > 0$ and $T'\in Z_{T}$ do commute with $\rho_{t}(\gamma)$.

    Therefore, the centralizer of $\rho_{t}(\gamma)$ in $\Isom(\H^{\infty})$ can be identified with $\R^{*}_{+}\times Z_{T}$ where $Z_{T}$ is an infinite-dimensional Lie group by Proposition \ref{prop:infinite dimensional centralizer for orthogonal operator}.
\end{proof}

\subsection{Bending irreducible representations from \texorpdfstring{$\Isom(\H^{2})$}{H 2}}

The rigidity theorem of Mostow states that if $\Gamma$ is a lattice in $\PO(n,1)$ with $n \geqslant 3$, and $\rho: \Gamma \to \PO(n,1)$ is a faithful representation such that $\rho(\Gamma)$ is a lattice, then $\Gamma$ and $\rho(\Gamma)$ are conjugate in $\PO(n,1)$. In other words, there is a unique hyperbolic structure on an $n$-dimensional manifold of finite volume up to isometry. However, for some lattices $\Gamma$, there exist many faithful and discrete non-conjugate representations $\Gamma \to \PO(m,1)$, where $2 \leqslant n < m$. Bending is a way to construct examples of such non-conjugate representations. 

In this section, we only consider representations $\rho_{t}$ from $\Isom(\H^{2}) = \PO(2,1)$ to $\Isom(\H^{\infty}) = \PO(\infty,1)$ as in Theorem \ref{thm:exotic deformation B} for $n=2$. Let $0<t\leqslant 1$ and let $\Gamma < \Isom(\H^{2})$ be the fundamental group of a hyperbolic closed surface (compact, connected, orientable and without boundary), $\Gamma = \pi_{1}(S)$. We are going to use this technique to deform $\rho_{t}(\Gamma)$ in $\Isom(\H^{\infty})$. The deformation is defined depending on whether $\Gamma$ is written as a free product with amalgamation or as an HNN extension. Since $S$ is a closed surface of genus $g \geqslant 2$, we can always decompose $\Gamma$ in both forms.

Suppose first that $\alpha \subset S$ is a simple closed geodesic which separates $S$ into two connected components $S_{1}$ and $S_{2}$ that are non-contractible. Then by Van Kampen theorem, the fundamental group of $S$ is an amalgamated product, $\pi_{1}(S) = \Gamma = A*_{C} B$, where $A = \pi_{1}(S_{1})$, $B = \pi_{1}(S_{2})$ and $C = \pi_{1}(\alpha)$. In particular, the group $C$ preserves and acts cocompactly on a geodesic in $\H^{2}$ (a totally geodesic copy of $\H^{1} \simeq \R$). Moreover, this group is cyclic, let $c \in C$ be a generator. If $\mathcal{Z}(\rho_{t}(c)) = \mathcal{Z}(\rho_{t}(C))$ is the centralizer of $\rho_{t}(C)$ in $\Isom(\H^{\infty})$, then any $z \in \mathcal{Z}(\rho_{t}(c))$ provides a deformation of $\rho_{t}(\Gamma)$ by conjugating the elements of $B$:
$$\begin{array}{cccc}
    \sigma_{z,t} : & \Gamma = A *_{C} B & \to & \Isom(\H^{\infty}) \\
     & a \in A & \mapsto & \rho_{t}(a)\\
     & b \in B & \mapsto & z\rho_{t}(b)z^{-1}.
\end{array}$$

Bending can also be defined when the cocompact lattice is an HNN extension. If $\alpha$ is now a non-separating simple closed geodesic of the surface $S$, then $\pi_{1}(S) = \Gamma = A*_{C}$ where $A=\pi_{1}(S\setminus \alpha)$ and $C = \pi_{1}(\alpha)$. The surface $S\setminus \alpha$ has a single connected component but two boundary components which induce two injections of $C$ into $A$, denoted by $\iota_{1}$ and $\iota_{2}$. Let us denote by $s$ the stable letter. We have $$\Gamma = \langle A,s \mid \forall c\in C\ \iota_{1}(c) = s\iota_{2}(c)s^{-1} \rangle.$$
In a similar fashion, for $z \in \mathcal{Z}(\rho_{t}(c))$, we can deform $\rho_{t}(\Gamma)$:
$$\begin{array}{cccc}
    \sigma_{z,t} : & \Gamma = A *_{C} & \to & \Isom(\H^{\infty}) \\
     & a \in A & \mapsto & \rho_{t}(a)\\
     & s & \mapsto & z\rho_{t}(s).
\end{array}$$

\begin{lemma}\label{lem:trivial centralizer}
    Let $\Gamma < \Isom(\H^{\infty})$ be any subgroup. If $\Gamma$ has no fixed point on the boundary of $\H^{\infty}$ and leaves no proper totally geodesic subspace of $\H^{\infty}$ invariant, then the centralizer of $\Gamma$ is trivial.
\end{lemma}

\begin{proof}
Let $\Gamma < \Isom(\H^{\infty})$ and $g$ be an element in the centralizer of $\Gamma$ in $\Isom(\H^{\infty})$.
\begin{itemize}
    \item If $g$ is elliptic, then the set of fixed points $\Fix(g) \subset \H^{\infty}$ is totally geodesic and $\Gamma$-invariant.
    \item If $g$ is hyperbolic, the axis of $g$ is a geodesic in $\H^{\infty}$ which is $\Gamma$-invariant.
    \item If $g$ is parabolic, denote by $\xi \in \partial\H^{\infty}$ its fixed point in the boundary of $\H^{\infty}$. By Theorem \ref{thm:nested closed convex sets}, we have $$\{\xi\} = \bigcap_{\epsilon >0}\overline{\{y\in\H^{\infty}\mid d(y,g(y))\leqslant\epsilon\}},$$
    where all the $\{y\in\H^{\infty}\mid d(y,g(y))\leqslant\epsilon\}$ are closed, convex and $\Gamma$-invariant sets. Thus $\xi$ must be a fixed point for $\Gamma$.
\end{itemize}
It follows that when $\Gamma$ is irreducible, $g$ must be elliptic and $\Fix(g) = \H^{\infty}$, so the centralizer of $\Gamma$ is reduced to the identity element of $\Isom(\H^{\infty})$.
\end{proof}

This statement is well-known in finite dimension (see for example the second corollary of \cite[Theorem 12.2.6]{ratcliffe}).

\begin{lemma}\label{lem:same t}
    Let $\Gamma$ be the fundamental group of a closed hyperbolic surface and write $\Gamma = A*_{C}B$ or $\Gamma = A*_{C}$. Let $\mathcal{Z}(\rho_{t}(c))$ be the centralizer of $\rho_{t}(c)$, $z_{1},z_{2} \in \mathcal{Z}(\rho_{t}(c))$ and $0 < t_{1},t_{2} \leqslant 1$. If the deformations $\sigma_{z_{1},t_{1}}$ and $\sigma_{z_{2},t_{2}}$ are conjugate in $\Isom(\H^{\infty})$, then $t_{1} = t_{2}$.
\end{lemma}

\begin{proof}
    Suppose that there exists $g \in \Isom(\H^{\infty})$ such that $\sigma_{z_{1},t_{1}} = g\sigma_{z_{2},t_{2}}g^{-1}$ on $\Gamma$. For any loxodromic element $a \in A$, computing the translation lengths, we have on the one side
    $$\ell_{\H^{\infty}}(\sigma_{z_{1},t_{1}}(a)) = \ell_{\H^{\infty}}(\rho_{t_{1}}(a)) = t_{1}\ell_{\H^{2}}(a),$$
    and on the other side
    $$\ell_{\H^{\infty}}(g\sigma_{z_{2},t_{2}}(a)g^{-1})=\ell_{\H^{\infty}}(\rho_{t_{2}}(a))=t_{2}\ell_{\H^{2}}(a).$$
    Therefore, $t_{1}=t_{2}$ since $\ell_{\H^{2}}(a) > 0$.
\end{proof}

In the following, we only consider the case where $\Gamma = A*_{C} B$, \ie the geodesic $\alpha$ separates the surface $S$ into two non-contractible components so that $\Gamma = \pi_{1}(S) < \Isom(\H^{2})$ is decomposed as $\Gamma = A *_{C} B$, where $C = \langle c \rangle \simeq \Z$. Let $\mathcal{Z}(\rho_{t}(c))$ denote the centralizer of $\rho_{t}(c)$ in $\Isom(\H^{\infty})$.

\begin{lemma}\label{lem:conjugation by elliptic}
    Let $0 < t \leqslant 1$ and $A<\Isom(\H^{2})$ be a non-elementary discrete subgroup. If for all $a\in A$, $\rho_{t}(a) = g\rho_{t}(a)g^{-1}$ for some $g \in \Isom(\H^{\infty})$, then $g$ is elliptic and fixes pointwise the axis of every loxodromic isometry $\rho_{t}(a)$ for $a \in A$.
\end{lemma}

\begin{proof}
    If the isometry $g$ is parabolic, it has a unique fixed point in the boundary of $\H^{\infty}$. Since $\rho_{t}(a)$ commutes with $g$ for every $a \in A$, then $\rho_{t}(A)$ has a global fixed point in $\partial \H^{\infty}$, which is in contradiction with Proposition \ref{prop:no fixed point in boundary}.
    
    If $g$ is now loxodromic, it has two fixed points $\xi, \eta \in \partial\H^{\infty}$ in the boundary of $\H^{\infty}$. Then, the fixed point sets of all the $\rho_{t}(a)$, for $a \in A$, are contained in $\{\xi, \eta\}$, which is in contradiction with Corollary \ref{cor:image of non-elementary group}.

    Therefore $g$ must be elliptic. Let $a \in A$ be a loxodromic isometry, its image $\rho_{t}(a)$ has the same type. Denote by $L_{a}$ the axis of $\rho_{t}(a)$. Since $\rho_{t}(a) = g\rho_{t}(a)g^{-1}$, $g$ preserves $L_{a}$. Restricted to this axis, $g$ then acts either as the identity, or with exactly one fixed point, \ie as a reflection. The second case is excluded since $g$ commutes with $\rho_{t}(a)$ which acts by translation on $L_{a}$.
\end{proof}

\begin{proposition}\label{thm2.19}
    Let $0 < t \leqslant 1$. If $z_{1}, z_{2} \in \mathcal{Z}(\rho_{t}(c))$ are such that $\ell_{\H^{\infty}}(z_{1}) \neq \ell_{\H^{\infty}}(z_{2})$, then $\sigma_{z_{1},t}$ and $\sigma_{z_{2},t}$ are not conjugate in $\Isom(\H^{\infty})$.
\end{proposition}

\begin{proof}
    If $g \in \Isom(\H^{\infty})$ is such that $\sigma_{z_{1},t} = g\sigma_{z_{2},t}g^{-1}$ on $\Gamma = A*_{C} B$, then for every element $a \in A$, we have
    $\rho_{t}(a) = g \rho_{t}(a) g^{-1}$, and for every $b \in B$, we get $z_{1}\rho_{t}(b)z_{1}^{-1} = g z_{2}\rho_{t}(b)z_{2}^{-1} g^{-1}$, so $\rho_{t}(b) = (z_{1}^{-1}gz_{2}) \rho_{t}(b) (z_{1}^{-1}gz_{2})^{-1}$. Since $A$ and $B$ are both discrete and non-elementary, $g$ and $z_{1}^{-1}gz_{2}$ must be elliptic by Lemma \ref{lem:conjugation by elliptic}. If $L_{c}$ is the axis of $\rho_{t}(c)$, we have moreover that $g$ acts as the identity on $L_{c}$. For $i\in\{1,2\}$, $z_{i}$ acts on $L_{c}$ as a translation by $\ell_{\H^{\infty}}(z_{i})$ according to Proposition \ref{prop:infinite dimensional centralizer for loxodromic isometry} (if one of the $z_{i}$ is elliptic, then $\ell_{\H^{\infty}}(z_{i}) =0$ and its action of $L_{c}$ is trivial). It follows that $z_{1}^{-1}gz_{2}$ acts on $L_{c}$ as a translation by $\ell_{\H^{\infty}}(z_{2})-\ell_{\H^{\infty}}(z_{1}) \neq 0$, so $z_{1}^{-1}gz_{2}$ is a loxodromic isometry.
\end{proof}

\begin{corollary}\label{prop2.20}
    Let $0<t\leqslant 1$ and $z \in \mathcal{Z}(\rho_{t}(c))$ be a loxodromic element. The representation $\sigma_{z,t}:\Gamma \to \Isom(\H^{\infty})$ obtained by bending with $z$ is not conjugate in $\Isom(\H^{\infty})$ to the restriction of any irreducible representation $\rho_{t'}:\Isom(\H^{2})\to\Isom(\H^{\infty})$ for $0 < t' \leqslant 1$. 
\end{corollary}

\begin{proof}
    Suppose by contradiction that there exists $g \in \Isom(\H^{\infty})$ such that $\sigma_{z,t} = g\rho_{t'}g^{-1}$ on $\Gamma = A*_{C} B$. Since $\rho_{t'}|_{\Gamma} = \sigma_{\id,t'}$, we have $t=t'$ by Lemma \ref{lem:same t}. We are then left with $\sigma_{z,t} = g\sigma_{\id,t}g^{-1}$. This is impossible due to Proposition \ref{thm2.19}. 
\end{proof}

Recall from Proposition \ref{prop:infinite dimensional centralizer for loxodromic isometry} that the centralizer $\mathcal{Z}(\rho_{t}(c))$ contains infinitely many loxodromic elements. The previous results show that there is at least a one-parameter family of elements in $\mathcal{Z}(\rho_{t}(c))$ providing non-conjugate deformations of the group $\Gamma$, none of which being conjugate to the restriction of any exotic representation.

\begin{corollary}
Let $0<t\leqslant 1$ and $\Gamma$ be the fundamental group of a closed hyperbolic surface. Then the space of deformations of $\rho_{t}|_{\Gamma}$, up to conjugation in $\Isom(\H^{\infty})$, contains a one-parameter family of representations which are not pairwise conjugate, nor conjugate to the restriction of any $\rho_{t'}$, for $0<t'\leqslant 1$.
\end{corollary}

\begin{proof}
Let $\mathcal{Z}(\rho_{t}(c))$ be the centralizer of $\rho_{t}(c)$ in $\Isom(\H^{\infty})$. Write $\rho_{t}(c)$ in a suitable basis as in Proposition \ref{prop:infinite dimensional centralizer for loxodromic isometry} in the form $\rho_{t}(c)=\left(\begin{array}{ccc}
       \lambda & 0 & 0 \\
       0 & \lambda^{-1} & 0\\
       0 & 0 & T
    \end{array}\right)$, with $\lambda > 1$. Fix any $T'$ commuting with $T$. Then for every $\mu > 1$, the isometry
$$z_{\mu} = \left(\begin{array}{ccc}
       \mu & 0 & 0 \\
       0 & \mu^{-1} & 0\\
       0 & 0 & T'
\end{array}\right)$$
belongs to $\mathcal{Z}(\rho_{t}(c))$. Moreover, these isometries have distinct translation lengths $\mu$, so they satisfy Proposition \ref{thm2.19} and Corollary \ref{prop2.20}.  
\end{proof}

\begin{remark}
    In particular, the cocompact lattice $\Gamma < \Isom(\H^{2})$ has more representations into $\Isom(\H^{\infty})$ than the whole group $\Isom(\H^{2})$, and its space of convex-cocompact representations is not reduced to the restrictions of the exotic representations $\rho_{t}$.
\end{remark}

When $n\geqslant 3$, let $\rho : \Isom(\H^{n}) \to \Isom(\H^{\infty})$ be an irreducible representation. The previous construction does not always provide deformations of this representation $\rho$ in $\Isom(\H^{\infty})$. Indeed, let $M$ be a compact hyperbolic $n$-manifold with fundamental group $\Gamma < \Isom(\H^{n})$. If $N \subset M$ is a totally geodesic hypersurface, then $C := \pi_{1}(N)$ preserves and acts cocompactly on a totally geodesic copy of $\H^{n-1}$ inside $\H^{n}$ and $N \simeq \H^{n-1}/C$. As previously, if $N$ is separates $M$ into two connected components $M_{1}$ and $M_{2}$ which are non-contractible, we can split $\Gamma$ into an amalgamated product $\Gamma = A*_{C} B$ with $A = \pi_{1}(M_{1})$ and $\pi_{1}(M_{2})$. And if $N$ is non-separating, then $\Gamma$ writes as the HNN extension $\Gamma = A*_{C}$ where $A = \pi_{1}(M\setminus N)$. The universal covers $\widetilde{M_{1}}$ and $\widetilde{M_{2}}$ are simply connected complete hyperbolic manifolds with totally geodesic boundaries, thus they are isometric to some intersection of half-spaces in $\H^{n}$ with disjoint boundaries (\cite[Theorem 3.5.2]{martelli}). Their fundamental groups $A$ and $B$ are thus convex-cocompact in $\Isom(\H^{n})$. 

The group $C$ is finitely generated as the fundamental group of a compact hyperbolic manifold, however it is not cyclic anymore. Let $S =\{c_{1},\dots,c_{m}\}$ be a finite generating set for $C$. The centralizer of $\rho(C)$ in $\Isom(\H^{\infty})$ is the intersection of the $\mathcal{Z}(\rho(c_{k}))$, $1 \leqslant k \leqslant m$, and might be trivial even though each $\mathcal{Z}(\rho(c_{k}))$ is as described in Proposition \ref{prop:infinite dimensional centralizer for loxodromic isometry}.

\begin{remark}
    Note that hyperbolic manifolds with totally geodesic hypersurfaces do exist in any dimension, see for example \cite[chapter 7]{JM} and \cite[chapter 3]{kapovich07} for constructions using arithmetic lattices. It was actually proved in \cite{bader-fisher-miller-stover} that a finite volume hyperbolic manifold with infinitely many totally geodesic hypersurfaces is necessarily arithmetic.
    
\end{remark}

\bibliographystyle{alpha}
\bibliography{main.bib}

\noindent{\sc School of Mathematics, Korea Institute for Advanced Study (KIAS), 02455 Seoul, Korea}

\noindent{\it Email address:} {\tt \href{mailto:davidxu@kias.re.kr}{davidxu@kias.re.kr}}

\end{document}